\newtheorem{thm}{Theorem}[subsection]
\newtheorem{lemma}[thm]{Lemma}
\newtheorem{prop}[thm]{Proposition}
\newtheorem{defn}[thm]{Definition}
\newtheorem*{defn*}{Definition}
\begin{document}

\newcommand{\R}{\mathbb R}         
\newcommand{\C}{\mathbb C}                
\newcommand{\Z}{\mathbb Z}                
\newcommand{\Q}{\mathbb{Q}}
\newcommand{\Rplus}{\R^+}            

\renewcommand{\o}{\mathsf}	 
\newcommand{\Axis}{\o{Axis}}         
\newcommand{\Aut}{\o{Aut}}       
\newcommand{\Basis}{\o{Basis}}      
\renewcommand{\det}{\o{det}}          
\newcommand{\Det}{\o{Det}}          
\renewcommand{\iint}{\o{int}}	     
\newcommand{\Fix}{\o{Fix}}     

\newcommand{\Hom}{\o{Hom}}     
\newcommand{\Homeo}{\o{Homeo}} 
\newcommand{\Ht}{\o{H}^2}           
\renewcommand{\iint}{\o{int}}	     
\newcommand{\Ker}{\o{Ker}}		 
\renewcommand{\mod}{\o{mod}}	
\newcommand{\Mod}{\o{PGL(2,\Z)}}  
\renewcommand{\P}{\mathsf P}         
\newcommand{\rank}{\o{rank}}	
\newcommand{\sgn}{\o{sign}}	
\newcommand{\spn}{\o{span}}	
\newcommand{\T}{\mathsf T}          
\newcommand{\tr}{\o{tr}}                 

\newcommand{\Qpo}{\Q\o{P}^1}    

\newcommand{\MCG}{\o{Mod}(\surf)}  
\newcommand{\GLtZ}{\o{GL(2,\Z)}}  
\newcommand{\Ft}{\o{F}_2}        

\newcommand{\Prim}{\o{Prim}}   

\newcommand{\Out}{\o{Out}}       
\newcommand{\Inn}{\o{Inn}}         
\newcommand{\Ou}{\Out(\Ft)}      
\newcommand{\Au}{\Aut(\Ft)}      
\newcommand{\Sss}{\mathscr{S}} 

\newcommand{\Oto}{\o{O}(2,1)}
\newcommand{\SOoto}{{\o{SO}(2,1)^0}}
\newcommand{\SOto}{{\o{SO}(2,1)}}
\newcommand{\poto}{{\o{PO}(2,1)}}
\newcommand{\SLt}{\o{SL}(2)}
\newcommand{\SLtC}{\o{SL}(2,{\C})}
\newcommand{\PSLtC}{\o{PSL}(2,\C)}
\newcommand{\SLtR}{\o{SL}(2,\R)}
\newcommand{\PGLtR}{\o{PGL}(2,\R)}

\newcommand{\e}{\mathfrak{e}}          
\newcommand{\Fte}{\Ft^\e}               
\newcommand{\X}{\o{X}(\Ft,\SLtC)} 
\newcommand{\Inne}{\o{Inn}^\e}   

\newcommand{\E}{\o{E}}                
\newcommand{\V}{\o{V}}               
\newcommand{\Iso}{\o{Isom}^+(\E)}
\newcommand{\Sim}{\o{Sim}^+(\E)}
\newcommand{\Future}{\o{Future}}
\newcommand{\Past}{\o{Past}}

\newcommand{\surf}{\Sigma}           
\newcommand{\Fricke}{\mathfrak F}    
\newcommand{\FS}{{\Fricke}(\surf)}	
\newcommand{\Soo}{\surf_{1,1}}         
\newcommand{\Coo}{C_{1,1}}         
\newcommand{\SymThree}{\mathfrak{S}_3} 

\newcommand{\QQ}{\mathcal{Q}}           
\newcommand{\xii}{\xi^{-1}}

\newcommand{\Hh}{\mathfrak{h}}  
\newcommand{\HH}{\mathcal H}  
\newcommand{\CP}{{\partial\HH}} 
\newcommand{\iot}[1]{\iota_#1}     
\newcommand{\tiot}[1]{\widetilde\iota_#1}     

\newcommand{\vn}{\mathbf n} 
\newcommand{\vu}{\mathbf u}     
\newcommand{\vv}{\mathbf v}     
\newcommand{\vw}{\mathbf w}    
\newcommand{\vx}{\mathbf x}     
\newcommand{\va}{\mathbf a}     
\newcommand{\vb}{\mathbf b}    
\newcommand{\vy}{\mathbf y}    
\newcommand{\vc}{\mathbf c}    
\newcommand{\vt}{\mathbf{t}} 
\newcommand{\vs}{\mathbf{s}}       
\newcommand{\s}[1]{\vs_#1}          
\renewcommand{\v}[1]{\vv_#1}       
\renewcommand{\t}[1]{\vt_#1}         
\newcommand{\h}[1]{\Hh_#1}         
\renewcommand{\H}[1]{\HH_#1}     
\newcommand{\qv}{\mathbf{q}}       

\newcommand{\Ao}{\va}
\newcommand{\Bo}{\vb}
\newcommand{\Co}{\vc}

\newcommand{\xo}[1]{{#1}^0}
\newcommand{\xp}[1]{{#1}^+}
\newcommand{\xm}[1]{{#1}^-}
\newcommand{\xpm}[1]{{#1}^{\pm}}

\newcommand{\am}{A^{-1}} 
\newcommand{\bm}{B^{-1}} 

\newcommand{\HA}{\Hh_A}		
\newcommand{\HB}{\Hh_B}
\newcommand{\HAM}{A^{-1}(\HA^c)}
\newcommand{\HBM}{B(\HB^c)}
\newcommand{\HO}{\Hh_C}
\newcommand{\HI}{\Hh_i}

\renewcommand{\aa}{\mathfrak{a}} 
\newcommand{\Aa}{\mathscr{A}}  
\newcommand{\Tt}{\mathscr{T}} 

\newcommand{\sa}{\sqrt{a^2-4}}
\renewcommand{\sb}{\sqrt{b^2-4}}
\renewcommand{\sc}{\sqrt{c^2-4}}   

\newcommand{\ideal}{\mathcal T}	
\newcommand{\iq}{{\mathcal Q}}	


\newcommand{\ZZ}{Z^1(\Gamma,\V)}
\newcommand{\Ho}{H^1(\Ft,\V)} 
\newcommand{\ho}{H^1(\surf_0,\V)}	 

\newcommand{\cu}{\mathsf u} 
\newcommand{\cub}[1]{{\mathsf u}^{#1}} 

\newcommand{\PHH}{\Delta^+}
\newcommand{\PT}[1]{\Pi(#1)} 
\newcommand{\lcross}{\boxtimes}
\newcommand{\pxo}[1]{\left[ #1\right]} 

\newcommand{\Br}{\mathcal{B}} 
\newcommand{\idealB}{\ideal_\Br}      %

\newcommand{\Proper}{\o{Proper}}
\newcommand{\Tile}{\o{Tile}}
\newcommand{\Tame}{\o{Tame}}
\newcommand{\bb}{\mathfrak{b}}  
\newcommand{\BB}{\Basis(\Ft)}	
\newcommand{\BBB}{\mathscr{B}(\Ft)}	

\newcommand{\FF}{\mathscr{F}}  
\newcommand{\PP}{\mathscr{P}}  
\newcommand{\CC}{\Corner}  

\newcommand{\Abelianize}{\mathscr{A}} 

\newcommand{\Id}{\mathbb{I}}            
\newcommand{\tideal}{\widetilde{\ideal}}
\newcommand{\Sphere}{\o{S}}            

\newcommand{\Xx}{\mathfrak{X}}	
\newcommand{\Edge}{\mathscr{E}} 
\renewcommand{\L}{\o{L}} 
\newcommand{\MM}{\mathbf{M}} 
\newcommand{\tMM}{\widetilde{\MM}} 

\newcommand{\Quad}{\o{Quad}} 		 

\newcommand{\Kplus}{\xp{K}}               
\newcommand{\tiota}{\tilde{\iota}}

\newcommand{\Corner}{\mathscr{C}} 
\newcommand\Farey{\mathsf{F}}

\newcommand{\um}{u^-}
\newcommand{\up}{u^+}
\newcommand\orderpreserving{Corollary~5.5~} 
\newcommand\disjointnesscriterion{Theorem~6.2~} 

\title[Two-generator groups]{Proper affine deformations of two-generator
Fuchsian groups}
\author[Charette]{Virginie Charette}
    \address{{\it Charette:\/}
    D\'epartement de math\'ematiques\\ Universit\'e de Sherbrooke\\
Sherbrooke,  Qu\'ebec J1K 2R1  Canada}
    \email{v.charette@usherbrooke.ca}
\author[Drumm]{Todd A. Drumm}
    \address{{\it Drumm:\/}
    Department of Mathematics\\ Howard University\\
    Washington, DC 20059 USA }
    \email{tdrumm@howard.edu}
\author[Goldman]{William M. Goldman}
    \address{{\it Goldman:\/}
    Department of Mathematics\\ University of Maryland\\
    College Park, MD 20742 USA}
    \email{wmg@math.umd.edu}

\date{\today}

\thanks{Charette gratefully acknowledges partial support from the
Natural Sciences and Engineering Research Council of Canada.
Goldman gratefully acknowledges partial support from National
Science Foundation grant DMS070781. 
All three authors acknowledge support from U.S. National Science Foundation grants DMS 1107452,  DMS1107263, DMS 1261522 and ,in particular DMS1107367 
``Research Networks in the Mathematical Sciences: Geometric structures And Representation varieties" (the GEAR Network) to facilitate this research.}
\begin{abstract} 
A Margulis spacetime is a complete flat Lorentzian $3$-manifold $M$ with free
fundamental group. Associated to $M$ is a noncompact complete hyperbolic surface 
$\surf$ homotopy-equivalent to $M$. The purpose of this paper is to classify
Margulis spacetimes when $\surf$ is homeomorphic to a one-holed torus.
We show that every such $M$ decomposes into polyhedra bounded by
crooked planes, corresponding to an ideal triangulation of $\surf$.
This paper classifies and analyzes the structure of {\em crooked ideal triangles,\/}
which play the same role for Margulis spacetimes as ideal triangles play for hyperbolic surfaces.
This extends our previous work on affine deformations of
three-holed sphere and two-holed cross surfaces.
\end{abstract}
\maketitle
\tableofcontents
 \section{Introduction}\label{sec:intro}
A {\em Margulis spacetime\/} is a complete
flat Lorentzian $3$-manifold $M$ with free fundamental group.
Associated to $M$ is a complete noncompact hyperbolic surface
$\surf$ homotopy equivalent to $M$. The purpose of this
paper is to classify Margulis spacetimes whose associated
hyperbolic surface $\surf$ is homeomorphic to a one-holed torus 
$\Soo$. 
This is part of a larger program of classifying Margulis spacetime
whose fundamental group is isomorphic to the rank two free group $\Ft$.

The significance of $\Soo$ among surfaces with fundamental group $\Ft$
is that $\Soo$ is an ``topological avatar'' of $\Ft$ in the following sense:
Every automorphism $\Ft\longrightarrow\Ft$ 
arises from a homeomorphism $\surf\longrightarrow\surf$ and 
two homeomorphisms of $\surf$ are isotopic if and only if the corresponding
automorphisms of $\Ft$ are equal. Thus the topological properties of $\surf$ are
{\em completely equivalent\/}  to the algebraic properties of $\Ft$.
A purely topological reformulation is that every homotopy-equivalence $\Soo\longrightarrow\Soo$ is homotopic to a homeomorphism. 

This special signifance derives from J.\ Nielsen's theorem~\cite{nielsen1917} 
that every automorphism $\Ft\longrightarrow\Ft$ preserves the {\em
peripheral structure,\/} namely the subset
\[
 \{ P\in \Ft \mid  P
\text{~is conjugate to~} 
ABA^{-1}B^{-1}  \text{~or to ~}  BAB^{-1}A^{-1}  \}\]
where $\{A,B\}$ is a fixed set freely generating $\Ft$.
In particular, the {\em mapping class group\/} 
$\MCG$  %
is isomorphic to the {\em outer automorphism group\/} $\Out(\Ft)$.
(Compare the discussion of Nielsen's paper in \cite{MR680777}, Chapter II.2,
pp. 81--83.)

This paper builds upon our previous 
work~(\cite{MR2653729,charette2011finite})
which deals with the cases
when $\surf$ is homeomorphic to a $3$-holed sphere $\surf_{0,3}$ 
or a $2$-holed cross-surface (real projective plane) $C_{0,2}$,
respectively. 
Every surface whose fundamental group is isomorphic to $\Ft$ is
homeomorphic to either 
$\surf_{0,3}$, 
$C_{0,2}$, 
the one-holed Klein bottle $\Coo$ or the one-holed torus $\Soo$. 
However, unlike $\surf_{0,3}$ and $C_{0,2}$, 
the deformation spaces for $\Soo$ and $\Coo$ are considerably more complicated
than the cases already considered.
In particular, the deformation space is a triangle for $\surf_{0,3}$
and a quadrilateral for $C_{0,2}$, respectively.
For the one-holed torus $\Soo$ and the one-holed
Klein bottle $\Coo$, the deformation space has infinitely many sides,
In general the deformation space is a convex polygon which is not
strictly convex.

The complexity of the boundary in this case was apparent in 
an example in 
\cite{charette2006non}, 
demonstrating the necessity of infinitely
many conditions to characterize which affine 
deformations are proper. 
Other examples indicating the differences in the deformation spaces
between homotopy-equivalent but nonhomeomorphic surfaces 
are given in 
\cite{MR2295549}. 
Nonproper deformations for which the
Margulis invariants $\alpha(\gamma)$ are either all positive or all
negative are constructed in 
\cite{GLMM,danciger2013margulis}
answering negatively a question raised in
\cite{MR1796129,goldman2002margulis}.

In this paper we restrict to the the one-holed torus $\Soo$, 
deferring the discussion of the deformation space for the one-holed Klein bottle
$\Coo$ to a forthcoming paper.

%
Just as convex hyperbolic surfaces decompose into ideal triangles,
Margulis spacetimes decompose into analogous subsets,
called {\em crooked ideal triangles.\/} A key technique 
is to pass from ideal triangulations of a hyperbolic surface $\surf$ 
to a decomposition of the Margulis spacetime into crooked ideal
triangles. If $M^3$ is a Margulis spacetime whose linearization $\surf$
has $\chi(\surf) = -1$, then our main technique involves {\em crookedly realizing\/} 
an ideal triangulation on $\surf$, and showing that every such Margulis spacetime admits
such a crooked ideal triangulation.
This program was carried out  when $\surf\approx\Soo$ 
in \cite{MR2653729}  and when $\surf\approx \Coo$ in \cite{charette2011finite}.
The theory of crooked ideal triangles developed here is based on
our previous work on disjointness of crooked planes developed in
\cite{drumm1999geometry} and \cite{burelle2012crooked}.
The Structure Theorem for crooked ideal triangles (Theorem~\ref{thm:StructureTheorem})
implies that every nondegenerate crooked ideal triangle has a canonical decomposition
into a {\em minimal\/} crooked ideal triangle (one where the faces share a common vertex)
and {\em parallel crooked slabs,\/} namely regions bounded by a pair of parallel crooked planes.
 
Drumm's early work~\cite{drummThesis,drumm1992fundamental} 
(see also \cite{MR1796126}) 
develops a flat Lorentzian analog of Poincar\'e's fundamental polyhedron theorem: 
if a crooked polyhedron $P$ in Minkowski $3$-space $\E$ is equipped with 
isometric face-pairings generating a group $\Gamma$, 
then $\Gamma$ acts properly on $\E$ with fundamental domain $P$.
Using this idea, every noncompact complete hyperbolic surface with finitely generated fundamental group arises from a Margulis spacetime (\cite{drumm1993linear}). 
(The noncompactness of $\surf$ is due to Mess~\cite{mess2007lorentz},
and reproved by different methods in Goldman-Margulis~\cite{MR1796129}
and Labourie~\cite{MR1909247}.)

It was natural to conjecture that every Margulis spacetime is
{\em geometrically tame,\/ } that is, admits
a fundamental domain bounded by crooked planes. 
We finish the proof this conjecture when $\surf \approx \Soo$.

Since the results of this paper were announced, 
JeffreyDanciger, Fran\c cois Gu\'eritaud and Fanny Kassel 
proved~\cite{danciger2013margulis} this conjecture 
(when $\surf$ is convex cocompact)  using the arc complex of $\surf$.
Their approach relates closely to ours; one difference is that we use the {\em pants complex\/} of $\surf$ instead, and we use ideal triangulations rather than decompositions with geodesic segements orthogonal to the boundary.

One consequence of this conjecture is {\em topological tameness:\/} 
the Margulis spacetime $M^3$ is homeomorphic to a solid handlebody. 
When $\surf$ is convex cocompact, then proofs of this general fact
have recently been announced in \cite{choi2013topological,danciger2013geometry}.

The paper is organized as follows. Sections \S 2, \S 3 and \S 4 establish notation, terminology and background prerequisites concerning affine, Lorentzian and hyperbolic geometry. Section \S 5 gives background on the rank two free group $\Ft$ and its geometric avatar, the one-holed torus $\Soo$ and introduces the important
notion of a {\em superbasis\/} of $\Ft$.
A {\em superbasis} is an equivalence class of free bases,
which corresponds to a tile in the deformation space. 
Given a hyperbolic structure on $\Soo$, a superbasis corresponds to an ordered triple of simple unoriented closed geodesics, mutually intersecting  transversely in a single point. Such a structure determines an ideal triangulation. Superbases thus parametrize ideal triangulations of $\Soo$. 
Section \S 5 closes with a description of this theory in  the well known and elegant
theory of {\em Farey arithmetic.}

Section \S 6 contains background on affine deformations of Fuchsian groups,
and in particular reviews the Margulis invariant which gives coordinates on the space of affine deformations.
Just as a superbasis of $\Ft$ gives affine cooordinates of the $\SLt$-character variety of $\Ft$ (discussed in \S\ref{sec:CharVars}, based on \cite{MR2497777}),  
a superbasis identifies the vector space $\Ho$ of equivalence classes of affine deformations with $\R^3$. 

Section \S 7 develops the structure of crooked ideal triangles, and establishes Theorem~\ref{thm:StructureTheorem}.
This enables an explicit description of the moduli space of crooked ideal triangles as a $6$-dimensional 
{\em orthant\/} $\R_+^6$.
Section \S 8 applies this theory to compute the {\em tiles\/} of the deformation space
which correspond to ideal triangulations of $\surf$.
The final section \S 9 describes how every affine deformation of $\surf$ 
corresponds to a tile or an edge and admits a crooked ideal quadrilateral as a fundamental domain.

\section*{Acknowledgements}

We would like to thank Jean-Philippe Burelle, Jeffrey Danciger, Fran\c cois Gu\'eritaud,  Fanny Kassel, Greg Laun, Yair Minsky, and Ser Peow Tan for many interesting conversations. We are grateful to the hospitality of Princeton University,
Universit\'e de Sherbrooke, Howard University, Institut Henri Poincar\'e (Paris), 
Centro de Investigaci\'on en Matem\'aticas (Guanajuato), 
Korean Institute for Advanced Study, 
International Centre for Theoretical Sciences (India), 
and Centre de Recherches Math\'ematiques (Montr\'eal)
for various visits which enabled this collaboration.
Finally we thank the GEAR Research Network in Mathematical Sciences,
and other grants from the National Science Foundation and the National
Science and Engineering Council of Canada for their generous financial support during this collaboration.

\section{Notation and terminology}
\subsection{Elementary conventions}
By a {\em triple\/} (respectively {\em pair\/}) we shall
always mean an {\em ordered\/} triple or pair respectively,
unless otherwise stated.

The complement of a subset $X$ will sometimes
be denoted $X^c$.

Denote the ring of rational integers by $\Z$ and
the fields of rational, real and complex numbers by $\Q, \R$ and $\C$ respectively.
The multiplicative groups of nonzero (respectively positive) real numbers will 
be denoted $\R^*$ (respectively $\R_+$).
\subsection{Groups and transformations}
If $A$ is an element of a group, denote by $\langle A\rangle$ the cyclic subgroup
generated by $A$.

If $A, B$ are invertible transformations, denote the commutator by:
\[
[A,B] := ABA^{-1}B^{-1}
\]
and conjugation by:
\[
A \xmapsto{\Inn(B)} B A B^{-1} 
\]
For any group $G$, denote its 
identity element as 
$\Id$, 
the inverse operation as $g\longmapsto g^{-1}$
and its multiplication by $(a,b) \longmapsto ab$. Denote its
automorphism group by $\Aut(G)$.
Denote its normal subgroup of inner automorphisms by $\Inn(G)$
and the quotient $\Aut(G)/\Inn(G)$ by $\Out(G)$.

\begin{defn}\label{defn:Invert} 
An automorphism $\phi\in\Aut(G)$ {\em inverts\/} an element
$g\in G$ if and only if  $\phi(g) = g^{-1}$.\end{defn}

\subsection{Projectivization}\label{sec:Projectivization}
If $V$ is a vector space over a field $k$, and $\vv\in V$ is a 
vector, denote $k\vv$ the $1$-dimensional linear subspace
spanned by $\vv$. 
The {\em projective space\/} $\P(V)$ associated to $V$ is the
set of all lines 
\[
[\vv ] = \{ k\vv \, \vert \,  k\in\R \} \subset V, 
\]
where $\vv\neq 0$.
Defined the {\em projectivization mapping\/} by:
\begin{align*}
V\setminus\{0\} &\xrightarrow{\P} \P(V) \\
\vv &\longmapsto [\vv]
\end{align*}
Projectivization is the quotient map for action of $\R^*$ by homotheties.
Since $\R^*  = \R_+ \times \{\pm 1\}$, projectivization factors as the composition
of two quotient maps, one by the group of {\em positive scalings\/} $\R_+$ and
one by $\{\pm 1\}$. The {\em sphere of directions\/} $\Sphere(V)$, the set of rays
at $0$ in $V$,  is the quotient
space $\big(V\setminus\{0\}\big)/\R_+$.
The $\{\pm 1\}$-quotient map $\Sphere(V)\longrightarrow\P(V)$  is
a double covering with covering group $\{\pm 1\}$. 
Given a Euclidean structure on $V$, its unit sphere
is a covenient cross-section to the $\R_+$-quotient map 
$V\setminus\{0\} \longrightarrow \Sphere(V)$.
This sphere $\Sphere(V)$ double covers $\P(V)$
with covering group $\{\pm 1\}$.

\subsection{Surfaces and curves}

The notation for surfaces follows 
\cite{MR2497777}.
An $n$-holed orientable surface of genus $g$ is denoted 
$\surf_{g,n}$. Such a surface is represented 
as the complement of $n$ disjoint discs in a connected sum
$\surf_g$ of $g$
tori (each summand is $\surf_{1,0}$ %
and $\surf_0$ is a $2$-sphere). %
Similarly, $C_{g,n}$ is represented as the complement of $n$
disjoint discs in a connected sum of $g$ {\em cross-surfaces.\/}
(A {\em cross-surface\/} is the name proposed by John H.\ Conway for a topological space 
homeomorphic to 
the real projective plane.)

Let $\surf$ be a surface with boundary $\partial\surf$.
A closed simple curve on $\surf$ is {\em essential\/} if and only if
it is not homotopic to a point.
A simple closed curve is {\em peripheral\/} if and only if
it is homotopic to a curve in $\partial\surf$.
If $\partial\surf$ is connected, then a simple closed curve is
{\em nonseparating\/} if and only if it essential and nonperipheral.
(This simply means its complement is connected.)
By a {\em curve class\/} we shall mean an isotopy class of
essential nonperipheral simple closed curves. In particular the 
curves are not assumed to be oriented, and the isotopies
are not required to preserve a (so far unmentioned) basepoint.
Denote the set of curve classes by $\Sss$.
Denote the set of isotopy class of {\em oriented\/} essential nonperipheral simple closed curves by $\Sss^+$.

If $\alpha,\beta\in\Sss$, 
their {\em geometric intersection number\/} 
$i(\alpha,\beta)$ is the nonnegative
integer which is the minimum cardinality of intersections of
representative curves in $\alpha,\beta$ respectively.
If $\surf$ is given a hyperbolic structure, then $i(\alpha,\beta)$
equals the cardinality of the intersection of the unique closed geodesics
representing $\alpha$ and $\beta$ (unless $\alpha=\beta$ in which
case 
$i(\alpha,\beta) = 0$).

The {\em mapping class group\/} of $\surf$ is defined as the group of isotopy classes
of homeomorphisms of $\surf$:
\[
\MCG := \pi_0\big(\Homeo(\surf)\big). \]
Note that we do not restrict to orientation-preserving homeomorphisms,
and $\MCG$ is therefore defined even when $\surf$ is nonorientable.

\section{Affine and Lorentzian geometry}
This section develops background for affine geometry of 
{\em Minkowski space,\/} the affine space
corresponding to a Lorentzian vector space.

\subsection{Affine spaces and vector spaces}

An {\em affine space\/} is a set equipped by a simply transitive
action of a vector space. We denote the affine space by $\E$ 
and the vector space by $\V$. The elements of $\E$
are {\em points,\/} and the elements of $\V$ are {\em vectors.\/}
Vectors $\vv\in\E$  identify with transformations 
$\E\longrightarrow\E$, the {\em translations.}
If $p\in\E$ is a point and $\vv\in\V$ is a vector, then the effect
of translation by $\vv$ on $p$ is the point which we denote $p + \vv$. 
We denote the unique vector effecting the translation taking point $p$ to point $q$
by $q - p \in \V$; this is just the vector difference in standard
coordinates derived from a basis of $\V$. 

In this paper, all vector spaces are vector spaces over the field
$\R$ of real numbers.  Then $\E$ is a smooth manifold and 
the simply transitive action of $\V$ on $\E$ identifies $\V$ with each tangent
space $\T_p\E$ for each point $p\in\E$.
If $[a,b]\xrightarrow{\gamma}\E$ 
is a smooth curve in affine space, then for each parameter 
$t\in[a,b]$, the {\em velocity vector\/} 
\[
\gamma'(t)\in \T_{\gamma(t)}\E \longleftrightarrow \V
\]
is defined.
The differential-geometric context is the following:
$\E$ is a smooth $1$-connected
manifold with a geodesically complete flat torsionfree affine connection.
Conversely, every $1$-connected smooth manifold with geodesically complete flat torsionfree affine connection is isomorphic to such an 
affine space. 

When $\V$ is equipped with a positive definite inner product, 
the affine space $\E$ is {\em Euclidean space.\/}
 A Riemannian manifold $(M,g)$
is isometric to a Euclidean space $\E$ if and only if 
$M$ is $1$-connected, and the Riemannian metric $g$ is
geodesically complete and has zero curvature and torsion.

\subsection{Lorentzian isometries}
In this paper, $\V$ will be a {\em Lorentzian vector space,\/}
that is, a $3$-dimensional inner product space of signature $(2,1)$.
We denote the inner product of two vectors $\vv,\vu\in\V$ by 
$\vv\cdot\vu$. (We emphasize this notation is {\em not\/} the usual
Euclidean dot product.) 
When the $\V$ is {\em oriented,\/} 
the  associated {\em cross product\/}
\begin{align*}
\V \times \V &\xrightarrow{\times} \V \\
(\vu,\vv) &\longmapsto \vu\times \vv,
\end{align*}
is unambiguously defined by
\[ (\vu\times\vv) \cdot \vw = \Det(\vu,\vv,\vw). \]
(Here $\Det$ is the alternating trilinear form on $\V$, 
uniquely specified by having unit length with respect to the
induced inner product on $\Lambda^3(\V)^*$ and compatible
with  the chosen orientation on $\V$.)

The following standard identity will be useful
later:
\begin{equation}\label{eq:CrossProductIdentity}
(\vu_1\times\vv_1) \cdot (\vu_2\times\vv_2) =
-(\vu_1\cdot \vu_2) (\vv_1\cdot \vv_2)  
+(\vu_1\cdot \vv_2) (\vv_1\cdot \vu_2)  
\end{equation}

\subsection{Minkowski space}

Define {\em Minkowski space\/} as an affine space $\E$ whose underlying vector
space $\V$ has the structure of a Lorentzian vector space. 
Alternatively, $\E$ is a geodesically 
complete $1$-connected flat Lorentzian manifold.

The geodesics in $\E$ are just Euclidean straight
lines, parametrized at constant Euclidean
speed:
\[ t \longmapsto  p + t \vv \]
Depending on whether the (constant) velocity
vector $\vv$ is timelike, null, or spacelike,
we say the geodesic is
a {\em particle,\/}  {\em photon,\/} or 
{\em tachyon,} respectively.

The group of all isometries of Minkowski space $\E$ splits as a semidirect product
$\mathsf{O}(2,1) \ltimes \V$ where now $\V$ denotes the vector space of translations.
Its index-two subgroup of {\em orientation-preserving isometries\/} $\Iso$
is the semidirect product $\SOto \ltimes \V$, that is, the following sequence of groups
\[
0 \longrightarrow \V \longrightarrow \Iso \xrightarrow{\L} \SOto \longrightarrow 1
\]
is exact.

\subsection{Involutions in geodesics}
Suppose that $\vu\in\V$ is a vector which is not
null. Then 
\begin{equation}\label{eq:involution}
\vv \longmapsto  -\vv + 2\frac{\vv\cdot\vu}{\vu\cdot\vu} \vu
\end{equation}
defines an involution of $\V$ fixing $\vu$ having
$\vu^\perp$ as its $(-1)$-eigenspace. 
For timelike $\vu$, this involution
defines the symmetry of $\Ht$ 
in the corresponding point in $\Ht$; 
for spacelike $\vu$, this involution defines
reflection in the corresponding geodesic in $\Ht$. 

Let $T := p + \R\vt$ be a particle parallel to a
unit-timelike vector $\vt\in\V$ (that is,
$\vt\cdot\vt = -1$).
The corresponding  reflection is:
\begin{equation}\label{eq:ParticleInvolution}
p + \vv \stackrel{\tiota}\longmapsto
p - \vv - 2 (\vv\cdot\vt) \vt
\end{equation}

\section{Hyperbolic geometry}
\subsection{Points and orientation in $\Ht$}
Let $\V$ be an oriented Lorentzian vector space.
The {\em hyperbolic plane\/} $\Ht$ is defined as
the subset of the projective space $\P(\V)$ 
consisting of timelike lines (particles), that is, lines spanned
by vectors $\vt$ such that $\vt\cdot\vt < 0$. 

The set of all nonzero non-spacelike 
vectors (that is, vectors $\vv\neq 0$ such that
$\vv\cdot\vv \le 0$) falls into two components,
denoted $\Future$ and $\Past$. 
The choice of a component $\Future$
is equivalent to choosing a  {\em time-orientation\/} 
on $\V$ and is useful in discussing the
Riemannian metric and the orientation of $\Ht$.
Namely, we identify $\Ht$ with 
\[
\{ \vt\in\Future \mid \vt\cdot\vt = -1\}
\]
The Lorentzian structure on $\V$ induces a Riemannian metric on
$\Ht$ of constant curvature $-1$. 
Furthermore the orientation on $\V$ and the radial vector field
on $\V$ induce an orientation on $\Ht$, when $\Ht$ is
described as above. 
However, identifying $\Ht$ with the {\em past\/} vectors $\vt\in\Past$ 
satisfying $\vt\cdot\vt = -1$ gives $\Ht$ the opposite orientation.

The group $\SOto$ of orientation-preserving linear isometries of 
$\V$ isometrically on $\Ht$ as follows. 
Its identity component $\SOoto$ preserves $\Future$ and acts by 
orientation-preserving isometries of $\Ht$. (Here $\Ht$ inherits
an orientation from $\V$ and the radial vector field on $\V$.)
However, orientation-preserving linear isometries of $\V$  not
in $\SOoto$ interchange $\Future$ and $\Past$. 

Here is how we interpret $\Ht$ in terms of the affine Lorentzian geometry of 
Minkowski space $\E$. The translations of $\E$ act on particles 
(timelike geodesics) in $\vt\subset\E$. Points in $\Ht$ then correspond
to {\em translational equivalence classes\/} of particles in $\E$.
We denote the equivalence class of $\vt$ as $[\vt]\in\Ht$. 
Similarly, points of $\partial\Ht$ correspond to translational equivalence
classes of null geodesics in $\V$ and geodesics in $\Ht$ correspond
to translational equivalence classes of tachyons (spacelike geodesics) in $\E$.
See \cite{burelle2012crooked} for further details.

\subsection{Geodesics and halfplanes in $\Ht$}
Geodesics in $\Ht$ correspond to spacelike $1$-dimensional
linear subspaces of $\V$.  {\em Oriented geodesics\/} 
in $\Ht$ correspond to unit-spacelike vectors.
The projectivization $\P(\vv^\perp)$ of the orthogonal
complement $\vv^\perp$ of a spacelike vector $\vv\in\V$ 
meets $\P(\V_-) \approx \Ht$ in a geodesic.
A geodesic in $\Ht$ separates $\Ht$ into two  {\em halfplanes.}

{\em Time-orientations\/} of $\V$ can be used to conveniently 
parametrize halfplanes in $\Ht$, as follows.
Choose a connected component
$\Future$ of
\[ \big\{ \vv\in\V\setminus\{0\} \mid \vv\cdot\vv < 0 \big\}. \]
Then $\P(\Future) = \Ht$ and restriction of the quotient map
\[ \V\setminus\{0\} \longrightarrow \P(\V) \]
identifies $\Ht$ with
\[ \big\{ \vv\in\Future \mid \vv\cdot\vv = -1 \big\}. \]
Under this identification, a spacelike vector $\vs$ determines a
{\em halfplane:}
\[ \HH(\vs) := \big\{ \vv\in\Ht \mid \vv\cdot\vs \ge 0 \big\} \]
bounded by the geodesic $\P(\vs^\perp)$.
The complement of the geodesic $\P(\vs^\perp)$
consists of the interiors of the two halfplanes
$\HH(\vs)$ and $\HH(-\vs)$.

The time-orientation on $\V$
(together with the orientation on $\V$)
induces an orientation on $\Ht$.
In this way an {\em oriented geodesic\/} $l\subset\Ht$ determines a halfplane 
bounded by $l$.

\subsection{Ideal polygons}\label{sec:IdealPolygon}

Define an {\em ideal polygon\/} $\Pi$ in $\Ht$ as a closed submanifold-with-boun\-dary
in $\Ht$ whose boundary is a disjoint union of complete 
mutually asymptotic  geodesics, which we call its {\em sides.\/} 
The complement of an ideal polygon is a disjoint union of (open) halfplanes,
and each side bounds a halfplane. 
If $\Pi$ is finite-sided, then the union of $\Pi$ with the set of endpoints of its
sides is compact.

An {\em ideal triangle\/} is an ideal polygon with three sides and an 
{\em ideal quadrilateral \/} is an ideal polygon with four sides.
A {\em pointed ideal polygon\/}  is an ideal polygon with a choice of a point on
each one of its sides.

\subsection{Isometries of $\Ht$}
For hyperbolic $X$,
denote by $\xp{X},\xm{X}$ respectively its attracting and repelling
fixed points.  When $X$ is parabolic, then denote by
$\xp{X} = \xm{X}$ its single fixed point.
If $X,Y\in\SOoto$ and $X$ is hyperbolic or parabolic, 
then 
\begin{align*}
Y\big(\xp{X}\big)\ &=\  \xp{(\Inn(Y)X)}\ = \xp{(YXY^{-1})}, \\
Y\big(\xm{X}\big)\ &=\ \xm{(\Inn(Y)X)}\ = \xm{(YXY^{-1})}.
\end{align*}

\subsection{Discrete subgroups of $\SOto$}
\label{sec:FuchsianGroups}
A discrete group $\Gamma_0$ of isometries of $\Ht$ acts
properly on $\Ht$. 
Furthermore a discrete group $\Gamma_0$ acts freely
if and only if $\Gamma_0$ is torsionfree.
The quotient $\surf = \Ht/\Gamma_0$ is a complete hyperbolic
surface with fundamental group isomorphic to $\Gamma_0$.

Let $x\in\Ht$. Then the closure of the orbit $\Gamma_0(x)$ in
$\Ht \cup \partial\Ht$ is independent of $x$, and is called the 
{\em limit set\/} of $\Gamma_0$, and denoted $\Lambda$. 
If $\Gamma_0$ is {\em nonelementary,\/} that is, not virtually
abelian, then $\Lambda$ is the unique minimal closed 
$\Gamma_0$-invariant subset of $\partial\Ht$. Geodesic rays
in $\Ht$ approaching a point of $\Lambda$ project to nonwandering
(or weakly recurrent) geodesic rays on the quotient. 

\begin{figure}[b]
\centerline{\includegraphics[scale=1.]{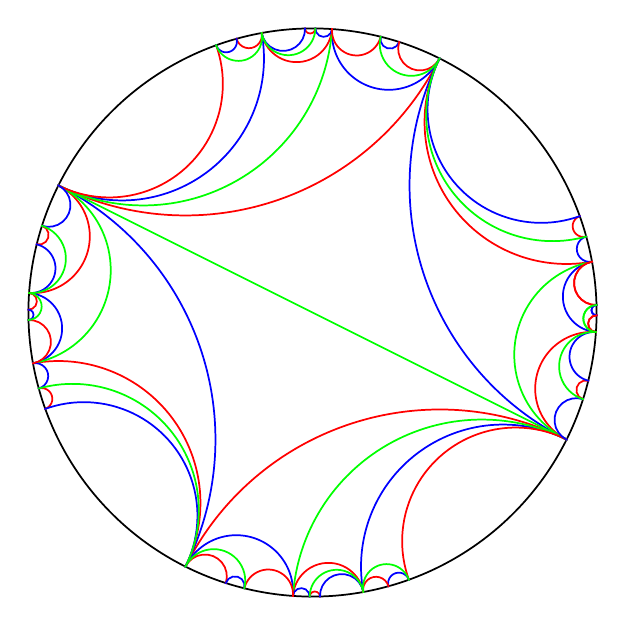}}
\caption{Nielsen convex region for a square 1-punctured torus tesselated by ideal triangles}
\label{fig:NielsenRegion}
\end{figure}

The {\em Nielsen convex region\/} of $\Gamma_0$ is the 
{\em convex hull\/} $\mathsf{Hull}(\Lambda)$ of $\Lambda$ in $\Ht$. 
Its quotient, the {\em convex core \/}  $\mathsf{Core}(\surf)$,
is the unique minimal convex subsurface carrying $\pi_1(\surf)$:
{\em Any geodesically convex subsurface $\surf'\subset\surf$
such that the inclusion $\surf'\hookrightarrow\surf$ is a 
homotopy equivalence must contain $\mathsf{Core}(\surf)$.}
Figure~\ref{fig:NielsenRegion} depicts a Nielsen convex region
in $\Ht$,
tiled by ideal triangles.

\subsection{Affine deformations of Fuchsian groups}
Consider a Fuchsian group
$\Gamma_0\subset\SOto$ which is free of rank two.
Denote the embedding onto $\Gamma_0$ by:
\[ \Ft 
\stackrel{\rho_0} \hookrightarrow\SOto \] 
%
An {\em affine deformation\/} of $\Gamma_0$ 
is a lift $\rho$ of $\rho_0$, namely a homomorphism such that
$\rho_0= \L\circ\rho$,  that is, such that the diagram
\begin{equation*}
\xymatrix{
& \Iso \ar[d]^{\L}  \\
\Ft \ar@{-->}[ur]^{\rho} 
\ar@{^{(}->}[r]^{\rho_0~} & \SOto}
\end{equation*}
commutes, 
Translational conjugacy classes of affine deformations form the vector space
$\Ho$, where $\V$ is the $\Ft$-module defined by the representation
$\Ft\hookrightarrow\SOto$. 
Explicitly, $\rho= (\rho_0,u)$, where  
$\Ft\xrightarrow{u}\V$ is the translational part:
\[
x \xmapsto{\quad\rho(A)\quad } \rho_0(A) x + u(A),
\]
then $u\in Z^1(\Ft,\V)$ is a cocycle taking values in the 
$\Ft$-module defined by $\rho_0$. 

In this paper, we will work in a more geometric context,
where Margulis spacetimes are {\em affine deformations\/} 
of the hyperbolic surface $\surf$. 
Equivalently, $\rho_0$ corresponds to the marked hyperbolic surface 
\[ \surf_0 := \Ht/\Gamma_0 \] with an isomorphism 
$\pi_1(\surf_0) \cong \Ft$.
This isomorphism induces an isomorphism
\[ \ho \cong\Ho \]
and the cohomology space $\ho$ is the space of all 
{\em affine deformations of the hyperbolic surface $\surf_0$.}

\subsubsection{Scalings}\label{sec:scalings}

Since the group $\R^*$ of scalar multiiplications of $\V$ centralize the linear part
$\rho_0$, it acts on the deformation space $\Ho$, again by scalar multiplications.
To exploit this extra symmetry, we quotient $\Ho$ by $\R^*$.
Thus the proper affine deformation space may be conveniently considered
as a subset of the projective space $\P\Ho$.

However, as will be discussed in \S\ref{sec:MargulisInvariant}, the Margulis invariant
is the key to understanding proper affine actions, but it depends on a choice of 
orientation. 
To maintain orientation, we only quotient by the group $\R_+$ of positive
scalings to obtain the {\em sphere of directions\/} $\Sphere\Ho$,
as in \S\ref{sec:Projectivization}. 
Two affine deformations are conjugate by the scaling $\lambda\in\R_+$  
if and only if their linear parts agree and their translational parts differ by 
$\lambda$, and hence determine the same point in $\Sphere\Ho$. 


\subsubsection{The Margulis invariant}\label{sec:MargulisInvariant}
 
Conjugacy classes of an individual (nonelliptic) element in an affine deformation of a group are classified by
the \emph{Margulis invariant}. Suppose $X\in\SOoto$ is hyperbolic or parabolic. Define a fixed vector $X^0$ such that:
\begin{itemize}
 \item (direction) for any timelike vector $\vu$, 
 \[ \Det( \vu, X(\vu) , X^0 ) >0, \]
and
\item  (length) if $X$ is hyperbolic then $X^0\cdot X^0 = 1$ or if $X$ is parabolic the length of $X^0$ 
is chosen at random and fixed.
\end{itemize}
If $A = \big( X, \vv)$ where $X = \L(A)$, 
define the {\em Margulis invariant\/} of $A$:
\[ \alpha(A) = X^0 \cdot \vv  . \]
%
Since the fixed vector $(X^{-1})^0$ of $X^{-1}$ equals $-X^0$,
the Margulis invariant is invariant under inversion:
\[ \alpha(A^{-1}) = \alpha(A). \]
The Margulis invariant is preserved under translational conjugacy~\cite{MR2138887}, and  any $\vv$ 
as above is translationally conjugate to a unique vector in the direction of $X^0$. For hyperbolic $X$ with no fixed points, it
is interpreted as the {\em signed\/} Lorentzian length
of the unique closed geodesic in  $\E/ \langle A \rangle$.
The following elementary properties are proved in 
\cite{abels2001properly,MR2106472,
MR2138887,drumm2001isospectrality,MR1796129,
margulis1983free,margulis1987complete}:

\begin{itemize}
\item $\alpha (A) = 0$ if and only if $A$
fixes a point;\label{hpfact:neq0}
\item For any $\eta\in\Iso$,
$\alpha(BAB^{-1}) \,=\, \alpha(A);$
\item For any $n\in\Z$,
$\alpha(A^n) \,=\, \vert n\vert  \alpha(A).$
\end{itemize}
\noindent 
The following {\em Opposite Sign Lemma\/} is less elementary.
It was first proved in \cite{margulis1983free,margulis1987complete};
see Abels~\cite{abels2001properly} for further discussion and 
Goldman-Labourie-Margulis~\cite{MR2600870} for a different proof.

\begin{lemma}[Margulis]\label{lem:OppositeSign}
Let $A,B\in \Iso$ have non-elliptic linear part and suppose 
$\alpha(A)$ and $\alpha(B)$ have opposite signs. 
Then $\langle A,B\rangle$ does not act properly on $\E$.
\end{lemma}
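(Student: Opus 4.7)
The plan is to argue by contradiction. Suppose $\langle A,B\rangle$ acts properly on $\E$ yet $\alpha(A) > 0 > \alpha(B)$ (relabeling if necessary; the cases with parabolic linear part follow by the same argument, since $\alpha$ still measures translation along the neutral null direction of the parabolic element).

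First I would recall the geometric interpretation of $\alpha$ for hyperbolic elements. If $X = \L(C)$ is hyperbolic, $\V$ decomposes as $V^+_X \oplus \R X^0 \oplus V^-_X$ into the expanding, neutral, and contracting eigenspaces. The vector $X^0$ spans the unique $X$-invariant spacelike line, and $\alpha(C) = X^0\cdot u(C)$ equals the signed Lorentzian displacement of $C$ along its invariant spacelike axis. In particular, if $\alpha(C)$ is small compared to the hyperbolic translation length of $\L(C)$, then $C$ has an almost-fixed line.

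The heart of the argument is to produce a sequence of elements $W_n \in \langle A,B\rangle$ whose linear parts $\L(W_n)$ tend to infinity in $\SOoto$ while $\alpha(W_n) \to 0$. A natural candidate is $W_n = A^{p_n} B^{q_n}$ with $p_n, q_n \to \infty$ and $p_n/q_n \to -\alpha(B)/\alpha(A) > 0$; the opposite-sign hypothesis is precisely what permits the leading contributions to the Margulis invariant to cancel along such a sequence. Once such $W_n$ are in hand, the contradiction with properness is immediate: an element of $\Iso$ whose linear part is hyperbolic with large translation length but whose Margulis invariant is nearly zero has an invariant axis displaced by a bounded amount, so one can exhibit a compact $K\subset\E$ with $W_n(K)\cap K \neq \emptyset$ for infinitely many $n$, violating properness.

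The main obstacle, and the technical heart of the proof, is rigorously establishing the asserted cancellation in $\alpha(W_n)$. Since $\alpha$ is not linear on products of non-commuting elements, one must control the cross-terms that arise from the interaction of $\L(A)$ and $\L(B)$. The cleanest way to organize this is via the dynamical framework of Goldman--Labourie--Margulis: $\alpha$ is realized as the integral of a H\"older continuous function over the geodesic flow on the unit tangent bundle $U\surf_0$, so $\alpha$ extends to a continuous linear functional on the space of geodesic currents on $\surf_0$. The opposite-sign hypothesis then produces, by convex combination, a positive current on which $\alpha$ vanishes, and approximating this current by periodic orbits (i.e.\ closed geodesics, corresponding to conjugacy classes in $\Gamma_0$) yields the desired sequence $W_n$. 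Margulis's original proof achieves the same conclusion by direct combinatorial estimates on the translational parts of long words in $A$ and $B$; either route should work, but the GLM approach is conceptually the most transparent.
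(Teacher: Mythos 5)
The paper does not actually prove this lemma: it is stated as a quoted result of Margulis, with references to \cite{margulis1983free,margulis1987complete}, to Abels's survey, and to the Goldman--Labourie--Margulis paper for a second proof. So there is no internal argument to compare yours against; your proposal is an outline of exactly the proofs the paper cites, and at that level it is faithful to them. The strategy --- find words $W_n=A^{p_n}B^{q_n}$ with $p_n/q_n\to -\alpha(B)/\alpha(A)$ so that the leading terms of $\alpha(W_n)$ cancel, then contradict properness --- is Margulis's original argument, and your alternative via geodesic currents is precisely the GLM route.

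As a standalone proof, however, your sketch has a genuine gap at the step you call ``immediate.'' Knowing that $\alpha(W_n)$ is small tells you that $W_n$ translates points \emph{on its own invariant affine line} by a small amount, but it tells you nothing about \emph{where in $\E$} that affine line sits. To produce a single compact $K$ with $W_n(K)\cap K\neq\emptyset$ for infinitely many $n$, you must show that the invariant axes of the $W_n$ all pass through a fixed compact region (equivalently, that the full translational parts of the $W_n$, not just their components along the neutral eigenvector, stay bounded after conjugating to a common basepoint). This is a separate estimate from the cancellation of the Margulis invariants, and it is where a substantial part of Margulis's work lies; the convergence of the attracting and repelling fixed points of $\L(W_n)$ to $\L(A)^+$ and $\L(B)^-$ controls the \emph{direction} of the axis but not its position in the affine space. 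Relatedly, the cancellation estimate itself --- that $\alpha(A^pB^q)=p\,\alpha(A)+q\,\alpha(B)+O(1)$ --- is asserted rather than proved; $\alpha$ is not additive on products, and the bounded error term requires the uniform north--south dynamics of $\L(A)$ and $\L(B)$ on $\partial\Ht$ together with the fact that $A^+\neq B^-$. Both points are handled in the references, so your proposal is a correct roadmap, but as written it is a citation-shaped outline rather than a proof.
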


This paper concerns  proper actions, and Lemma~\ref{lem:OppositeSign} implies that
translational conjugacy classes of proper affine deformations form two subsets
of $\Ho$, namely those with positive Margulis invariants, and those with negative Margulis invariants. These two subsets are interchanged by the antipodal automorphism $-\Id$,
which reverses orientation. 
For this reason we restrict attention to proper affine deformation with 
whose Margulis invariants are {\em positive.\/} 
Since this subset of $\Ho$ is $\R_+$-invariant, and positive scalings simply conjugate
the actions by dilations (and preserve $\L\circ \rho$) as in \S\ref{sec:scalings},
it becomes convenient to consider the proper deformation space as a subset of the
sphere of directions $\Sphere\Ho$. 
(See \S   \ref{sec:Projectivization}.) 

\section{The rank two free group }\label{sec:F2}

This purely algebraic section discusses the free group $\Ft$.
Conjugacy classes of primitive elements of $\Ft$ correspond to primitive elements
in its abelianization $\Z^2$. 
Furthermore, the outer automorphism group $\Ou$
of $\Ft$ is naturally isomorphic to the automorphism group $\Aut(\Z^2) = \GLtZ$.
The geometry of primitive elements is conveniently defined by {\em Farey fractions,\/}
which correspond to points in the rational projective line $\Qpo$.
Following Conway~\cite{MR1478672},
we introduce {\em superbases,\/}  which are the vertices of a natural trivalent tree 
$\T$ upon which the automorphism group $\GLtZ$ acts.

The natural trichotomy of primitives in $\Ft$ corresponds to a map or projective
lines $\Qpo \longrightarrow \P^1(\Z/2)$ defined by reduction modulo $2$. 
This gives a natural ordering to the set of edges of $\T$ incident to a given vertex.

Superbases are used to define affine coordinates on the $\o{SL}(2)$-character variety of $\Ft$ (an old theorem of Vogt) in \S\ref{sec:CharVars}.
Its infinitesimal version describes the vector space of translational conjugacy classes of affine deformations of an irreducible $\o{SL}(2)$-representation of $\Ft$
in terms of the Margulis invariants applied to a superbasis 
(\S\ref{sec:AlphaCoordinates}).

Finally, in \S\ref{sec:CoxExtFtRep} the Coxeter extension of $\Ft$ is described in terms of the {\em elliptic involution\/} of $\Ft$ defined in \S\ref{sec:EllInv}.
This is the algebraic manifestation of ``hyper-ellipticity''
of the one-holed torus and plays a fundamental role in our parametrization
fo Margulis spacetimes. 

\subsection{Bases and primitive elements of $\Ft$}\label{sec:bases}
A {\em basis\/} of $\Ft$ is an
ordered pair $(A,B)\in \Ft\times\Ft$ such that
$A$ and $B$ generate $\Ft$.
(Necessarily $A$ and $B$ {\em freely\/} generate $\Ft$.)
Denote the set of bases of $\Ft$ by $\BB$.
A {\em basic triple\/} is an ordered triple 
\[(A,B,C)\in\Ft\times\Ft\times\Ft \]
with $ ABC=\Id$, such that $(A,B)\in\BB$.
Clearly bases extend uniquely to basic triples by defining $C := \bm\am$. 
Conversely any pair inside a basic triple is a basis.

The automorphism group $\Aut(\Ft)$ acts simply transitively
on $\BB$: if $(A_0,B_0)\in\BB$ and $(A,B)\in\Ft\times\Ft$, then the mapping
\begin{align*}
A_0 &\longmapsto A \\
B_0 &\longmapsto B
\end{align*}
extends uniquely to an endomorphism $\Ft\longrightarrow\Ft$.
This endomorphism is an automorphism if and only if $(A,B)\in\BB.$

An element $A\in\Ft$ is {\em primitive\/} if and only if
$A$ extends to a basis. 
Let $\Prim(\Ft)\subset\Ft$ denote the set of primitives in $\Ft$.
If $A\in\Prim(\Ft)$, then the set of elements $B\in\Ft$ such that
$(A,B)\in\BB$ is the double coset $\langle A\rangle B_0\langle A\rangle$
where $B_0$ is an arbitrary element of this set.

\subsection{Automorphisms of $\Ft$.}
The group $\Inn(\Ft)\cong\Ft$ of inner automorphisms
acts  on the set $\Prim(\Ft)$.
We find a double extension $\Inne(\Ft)$ of 
$\Inn(\Ft)$ such that the quotient 
$\Prim(\Ft)/\Inne(\Ft)$ identifies with the
set $\Sss$ of free isotopy classes of {\em unoriented\/} noseparating 
simple closed curves on $\surf$.

\subsubsection{Abelianization}\label{sec:Abelianization}
Automorphisms $\phi\in\Aut(\Ft)$ induce automorphisms of the
abelianization 
$\langle A\rangle\oplus \langle B\rangle  \cong \Z^2,$
thus defining a homomorphism
\[ \Aut(\Ft) \xrightarrow{\Abelianize} \GLtZ \]
Inner automorphisms abelianize trivially,
and indeed $\Inn(\Ft) = \Ker(\Abelianize),$
and $\Abelianize$ induces an isomorphism
\[ \Ou \xrightarrow{\cong} \GLtZ.\]
The center of $\GLtZ$ equals the two-element subgroup $\{ \pm 1 \}$, and
\[
\Inne(\Ft) := \Abelianize^{-1}(\{ \pm 1 \}) \]
is a normal subgroup of $\Aut(\Ft)$ containing $\Inn(\Ft)$ with index two.

\subsubsection{Elliptic involutions}\label{sec:EllInv}
If $(A,B)$ is a basis, then {\em the elliptic involution $\e_{A,B}$
associated to $(A,B)$\/} is the automorphism 
which {\em inverts\/} $A$ and $B$ 
(in the sense of Definition~\ref{defn:Invert}):
\begin{align*}
\Ft & \xrightarrow{\e_{A,B}} \Ft \\
A &\longmapsto A^{-1} \\
B &\longmapsto B^{-1} 
\end{align*}
Although  $\e_{A,B}$ inverts $A$ and $B$, it does not invert the third element
$C$ of the corresponding basic triple.

\begin{defn*} An {\em elliptic involution\/} of $\Ft$ is any automorphism of $\Ft$
which is $\Aut(\Ft)$-conjugate to $\e_{A,B}$. \end{defn*}
\begin{lemma}
Let $\phi\in\Aut(\Ft)$ be an automorphism. 
The following are equivalent:
\begin{enumerate}
\item $\phi$ is an elliptic involution.
\item There exists a basis $(A',B')$  such
that $\phi$ inverts $A'$ and $B'$.
\item $\phi^2 = \Id$ and $\Abelianize(\phi)=-1$.
\end{enumerate}
\end{lemma}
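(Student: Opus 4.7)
The plan is to establish the cyclic implications $(1)\Rightarrow(2)\Rightarrow(3)\Rightarrow(1)$, with only the last carrying nontrivial content.

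For $(1)\Rightarrow(2)$: if $\phi = \psi\circ \e_{A,B}\circ \psi^{-1}$ for some basis $(A,B)$ and some $\psi\in\Aut(\Ft)$, set $A':=\psi(A)$ and $B':=\psi(B)$. Since $\psi$ is an automorphism, $(A',B')$ is a basis, and $\phi(A') = \psi(A^{-1}) = (A')^{-1}$, similarly for $B'$. For $(2)\Rightarrow(3)$: $\phi^2$ fixes both generators of the basis $(A',B')$ of $\Ft$ and therefore equals $\Id$, while the map induced by $\phi$ on the abelianization $\Z^2$ inverts each generator and hence equals $-\Id\in\GLtZ$, which is the element written $-1$ under the identification $\Ou\cong\GLtZ$.

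The substantial implication is $(3)\Rightarrow(1)$. Fix any basis $(A_0,B_0)$ and set $\e_0 := \e_{A_0,B_0}$. Since both $\phi$ and $\e_0$ abelianize to $-1$, the composition $\phi\circ\e_0^{-1}$ lies in $\Ker(\Abelianize) = \Inn(\Ft)$, and we may write $\phi = \Inn(g)\circ\e_0$ for some $g\in\Ft$. A short computation gives $\phi^2 = \Inn\bigl(g\cdot\e_0(g)\bigr)$, and since $\Ft$ has trivial center, $\phi^2 = \Id$ forces $\e_0(g) = g^{-1}$. In reduced form as a word in $\{A_0^{\pm 1},B_0^{\pm 1}\}$ this condition says exactly that $g$ is a palindrome.

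I then induct on the word length $|g|$ to produce $\psi\in\Aut(\Ft)$ with $\psi\circ\e_0\circ\psi^{-1} = \phi$, which by the already-proved equivalence $(1)\Leftrightarrow(2)$ amounts to exhibiting an inverted basis. The case $|g|=0$ gives $\phi=\e_0$ directly. For $|g|\geq 2$, decompose the palindrome as $g = XwX$ with $X\in\{A_0^{\pm 1},B_0^{\pm 1}\}$ and $w$ a shorter palindrome; using $\e_0(X) = X^{-1}$, the identity $X^{-1} \cdot g \cdot \e_0(X) = w$ yields
\[
\Inn(X^{-1})\circ\phi\circ\Inn(X) \;=\; \Inn(w)\circ\e_0,
\]
so the inductive hypothesis applied to $\Inn(w)\circ\e_0$ finishes this case. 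For the remaining case $|g|=1$, say $g=A_0$, one verifies directly that the Nielsen automorphism $\tau\colon A_0\mapsto A_0,\ B_0\mapsto A_0 B_0$ satisfies $\tau\circ\e_0\circ\tau^{-1} = \Inn(A_0)\circ\e_0 = \phi$; the symmetric cases $g\in\{A_0^{-1}, B_0, B_0^{-1}\}$ are handled identically.

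The main subtlety is that even-length palindromes peel all the way down to the trivial case $|g|=0$, while odd-length palindromes peel only to $|g|=1$, where inner conjugation alone no longer suffices and the genuine Nielsen substitution above is required to close the induction. A conceptually cleaner alternative is geometric: realize $\Ft\cong\pi_1(\Soo)$ and appeal to the uniqueness up to isotopy of the hyperelliptic involution of the one-holed torus together with Nielsen realization, although this imports a nontrivial fact from the mapping class group literature rather than arguing purely algebraically.
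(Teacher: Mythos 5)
The paper states this lemma without proof, so there is nothing to compare your argument against; I can only assess it on its own terms, and it is correct. The easy implications $(1)\Rightarrow(2)\Rightarrow(3)$ are handled exactly as one would expect, and your treatment of $(3)\Rightarrow(1)$ supplies the genuinely missing content: using the paper's identification $\Ker(\Abelianize)=\Inn(\Ft)$ (special to rank two) you write $\phi=\Inn(g)\circ\e_{A_0,B_0}$, the computation $\phi^2=\Inn\bigl(g\cdot\e_{A_0,B_0}(g)\bigr)$ together with triviality of the center of $\Ft$ correctly reduces the involution condition to $g$ being a palindrome in $\{A_0^{\pm1},B_0^{\pm1}\}$, and the two-step peeling $g=XwX$ with the conjugation identity $\Inn(X^{-1})\circ\phi\circ\Inn(X)=\Inn(w)\circ\e_{A_0,B_0}$ is verified correctly (I checked the Nielsen move $\tau$ at length one: it does conjugate $\e_{A_0,B_0}$ to $\Inn(A_0)\circ\e_{A_0,B_0}$, which indeed inverts the basis $(A_0,A_0B_0)$). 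You are also right to flag the parity issue: the even case terminates at $|g|=0$ by inner conjugation alone, while the odd case genuinely requires the non-inner Nielsen automorphism.

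One small presentational caution: the phrase ``by the already-proved equivalence $(1)\Leftrightarrow(2)$'' appears inside the proof of $(3)\Rightarrow(1)$, at which point only $(1)\Rightarrow(2)$ has been established; as written it could be misread as circular. Since producing $\psi$ with $\psi\circ\e_{A_0,B_0}\circ\psi^{-1}=\phi$ already gives $(1)$ directly from the definition of elliptic involution, that aside is unnecessary and I would simply delete it. The geometric alternative you sketch (hyperellipticity of $\Soo$ plus realization) is consistent with the spirit of the paper, which repeatedly invokes the one-holed torus as the topological avatar of $\Ft$, but your algebraic induction is self-contained and is the better choice here.
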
 



\subsubsection{Homomorphism to the modular group}
The subgroup $\Inne(\Ft)\subset\Aut(\Ft)$ 
defined in \S\ref{sec:Abelianization} is 
generated by $\e_{A,B}$ and $\Inn(\Ft)$. 
Furthermore
\begin{equation*}
\Inne(\Ft) \; \cong \;  \Fte 
\end{equation*}
and $\Inne(\Ft)/\Inn(\Ft)$ equals the center of $\Ou$ and
under the isomorphism
\begin{equation*}
\Aut(\Ft)/\Inn(\Ft) \;\cong\; \Ou \;\cong\; \GLtZ
\end{equation*}
the class $\e := [\e_{A,B}]$ maps to the central generator $-\Id$ of 
$\GLtZ$.
In particular 
\begin{equation*}
\Aut(\Ft)/\Inne(\Ft) \;\cong\; \Mod.
\end{equation*}

\subsection{The Farey tree}
\label{sec:Farey}
The set $\Prim(\Ft)/\Inne \cong \Qpo$ enjoys a rich combinatorial structure
which we encode in a trivalent tree $\T$ with a $\GLtZ$-action. 
In \S\ref{sec:1ht} this will be related to topological objects on $\surf$.

\subsubsection{Primitives and $\Qpo$}\label{sec:Primitives}

The simplest approach involves the abelianization $\Z^2$ of $\Ft$. 
The set $\Prim(\Z^2)$ consists of pairs $(p,q)\in\Z^2$ such thatz:
\begin{itemize}
\item At most one of $p,q$ is zero;
\item If both $p,q$ are nonzero, the $p$ and $q$ are coprime;
\item If $p = 0$, then $q = \pm 1$;
\item If $q = 0$, then $p = \pm 1$.
\end{itemize}
Such a primitive $(p,q)$ generates a one-dimensional linear subspace of $\Q^2$,
and therefore corresponds to a point in the {\em rational projective line\/} $\Qpo$.
The resulting map 
\[ \Prim(\Z^2) \longrightarrow \Qpo \]
is the quotient mapping by the action of $\{\pm\Id\}$ on $\Z^2$ by scalar multiplication.

An affine patch in $\Qpo$ consists of those points with homogeneous
coordinates $q\neq 0$; the corresponding point in inhomogeneous
(affine) coordinates is $\frac{p}{q}\in\Q$. 
The complement consists of the unique ideal point 
$\infty$ corresponding to $\frac10$ so $\Qpo = \Q \cup \{\infty\}$.

If $g\in\Ft$ is primitive, then its abelianization $\Abelianize(g)\in\Z^2$ is primitive,
and the corresponding mapping
\[
\Prim(\Ft) \xrightarrow{\Abelianize} \Prim(\Z^2) \]
is surjective. Furthermore it is equivariant with respect to the quotient homomorphism
$\Inne \longrightarrow \{\pm\Id \} $ with kernel $\Inn(\Ft)$ and defines an isomorphism
\begin{equation}\label{eq:PrimF2Qpo}
\Prim(\Ft)/\Inne \xrightarrow{\cong}  \Qpo. \end{equation}
This follows from the special property that two primitives with the same abelianization
are conjugate.

\subsubsection{Farey neighbors and superbases}
Two primitives $(p_1,q_1),(p_2,q_2)\in\Prim(\Z^2)$ generate $\Z^2$ if and and only
if $p_1 q_2 - p_2 q_1  = \pm 1$. 
In that case they form a basis of $\Z^2$ and we say that the corresponding points
\[ x_i := \frac{p_i}{q_i}\in\Qpo \] 
are {\em Farey neighbors.\/}
A triple $(x_1,x_2,x_3)\in \big(\Qpo\big)^3$ is a {\em Farey triple\/} if and only if
any $x_i$ and $x_j$ are Farey neighbors for $i\neq j$.

This closely relates to the notion of a {\em superbasis,\/} introduced by
Conway~\cite{MR1478672}: 
\[
(\alpha,\beta,\gamma) \in \Z^2 \times \Z^2 \times \Z^2 \]
is a {\em superbasis\/} if $\alpha + \beta + \gamma = 0$ and $(\alpha,\beta)$ is a basis of $\Z^2$. In that case $(\beta,\gamma)$ and $(\gamma,\alpha)$ are also bases of
$\Z^2$. 

Given  a pair of Farey neighbors $x_1,x_2$ as above, there are 
{\em exactly two extensions \/} to Farey triples $(x_1,x_2,x_3), (x_1,x_2,x_3')$,
given by {\em Farey addition\/} and {\em subtraction}:
\[
x_3 := \frac{p_1 + p_2}{q_1 + q_2},\qquad
x_3' := \frac{p_1 - p_2}{q_1 - q_2}. \]

The complex whose vertices corresponding to Farey triples of $\Qpo$ 
and edges corresponding to pairs of Farey neighbors, 
is a tree $\T$:
If $v$ is a vertex and $e$ is an edge, then $v$ is an endpoint of $e$ if
the pair of primitives correspond to $e$ lies in the triple of primitives corresponding
to $v$.

\subsubsection{Reduction modulo $2$ and three types of primitives}
Every superbasis admits a canonical ordering, which can easily be described using
Farey arithmetic and the canonical homomorphism $\Z \longrightarrow \Z/2$. 

The projective line $\P^1(\Z/2)$ contains three elements
\[ \frac10, \frac01, \frac11 \]
 corresponding to $\infty, 0, 1 (\mod~2)$ respectively.
Furthermore it is the ``algebraic avatar'' of  $3$-element sets:
Every permutation of $\P^1(\Z/2)$ is a projective transformation.
In other words,
\[
\o{GL}(2,\Z/2) 
\cong \SymThree. \]
For example, the generating 2-cycles for $\SymThree$ are represented
by matrices
\[
(12) \longleftrightarrow \bmatrix 0 & 1 \\ 1 & 0 \endbmatrix,\qquad
(23) \longleftrightarrow \bmatrix 1 & -1 \\  0 & -1 \endbmatrix \equiv
\bmatrix 1 &  1\\  0 & 1  \endbmatrix (\mod~2) \]
where the first choice of matrices defines a splitting $\SymThree \longrightarrow
\GLtZ$. 
Reduction modulo $2$ defines a homomorphism $\Z^2 \longrightarrow (\Z/2)^2$,
which  maps $\Prim(\Z^2)$ to the three nonzero elements of $(\Z/2)^2$. 
The resulting map 
\[
\Sss \cong \Qpo \longrightarrow \P^1(\Z/2)  \cong \{\infty, 0, 1\; (\mod~2)\}\]
partitions primitives in $\Ft$) into three classes,
depending on whether the reduced fraction $p/q$ corresponding to a coprime pair $(p,q)\in\Z^2$ 
respectively satisfies:
\begin{align*}
\frac{p}{q}     =\;  \frac{\text{odd}}{\text{even}}&\longleftrightarrow\frac10 = \infty, \\
\frac{p}{q}     =\; \frac{\text{even}}{\text{odd}}&\longleftrightarrow\frac01 = 0, \\
\frac{p}{q}     =\; \frac{\text{odd}}{\text{odd} }&\longleftrightarrow\frac11 = 1. 
\end{align*} 
If $\{x_1,x_2,x_3\}$ is an unordered Farey triple, then one may apply a permutation in $\SymThree$ so that 
\begin{align*}
x_1 & \equiv 0~(\mod~2), \\
x_2 & \equiv \infty~(\mod~2), \\
x_3 & \equiv 1~(\mod~2).
\end{align*}
We call such an ordered Farey triple {\em canonically ordered.\/}

\subsection{ Superbases of $\Ft$}\label{subsec:Super}
Superbases arise in several closely related contexts dealing with $\Ft$;
in particular, superbases describe affine coordinate systems on 
deformation spaces of $\SLt$-representations of $\Ft$. 
They will also parametrize the affine deformations, and also the tiles in the 
affine deformation space which correspond to ideal triangulations of a hyperbolic
surface $\surf\approx\Soo$.
After defining superbases in \S\ref{sec:BasicTriplesSuperbases}, 
we explain their combinatorial structure, 
which appears in the proper affine deformation space. 
Specifically, the set of {\em unordered\/} superbases defines the set of vertices of
a binary trivalent tree, with edges corresponding to the relation of
{\em neighboring superbases.\/} 

The previous discussion of superbases in $\Z^2$ motivates the present discussion.
Farey triples in $\Qpo$ precisely correspond to superbases in $\Ft$,
our principal object of study. 

The key point is the following.
If $(A,B)\in\BB$ is a basis, then 
$\big(\Abelianize(A),\Abelianize(B)\big)$ is a basis of the free abelian group $\Z^2$
and the corresponding points are Farey neighbors in $\Qpo$.
Conversely, a pair of Farey neighbors in $\Qpo$ determines a basis of $\Ft$,
unique up to the action of $\Inne$.

\subsubsection{Basic triples and superbases}\label{sec:BasicTriplesSuperbases}
An automorphism $g\in\Au$ acts on a basic triple $(A,B,C)$ 
(as defined in \S\ref{sec:bases}), as follows: 
\begin{equation}\label{eq:AuAction}
(A,B,C) \xmapsto{g} \big(g(A), g(B), g(C) \big) \end{equation}
\begin{defn*}
An {\em ordered superbasis\/} of $\Ft$ an  $\Inne$-orbit of basic  triples.
\end{defn*}
\noindent
Denote the set of ordered superbases by $\BBB$.
The  $\Au$-action \eqref{eq:AuAction} induces an $\Ou$-action on $\BBB$.
\begin{prop}
$\Mod 
\cong\Ou$
acts simply transitively on $\BBB$. \end{prop}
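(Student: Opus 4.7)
The plan is to lift the question to the $\Au$-action on the set of basic triples and then pass to the quotient by $\Inne$. First, I would establish that $\Au$ acts simply transitively on the set of all basic triples. The excerpt already notes in \S\ref{sec:bases} that $\Au$ acts simply transitively on $\BB$: given a fixed basis $(A_0, B_0)$, every basis $(A,B)\in\BB$ is the image of $(A_0,B_0)$ under a unique automorphism. Since a basic triple $(A,B,C)$ is completely determined by its first two entries via $C = B^{-1}A^{-1}$, and automorphisms respect products and inverses, the simply transitive action on $\BB$ extends verbatim to a simply transitive action on basic triples.

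Next I would note that $\Inne \trianglelefteq \Au$, which is immediate since $\Inne = \Abelianize^{-1}(\{\pm 1\})$ is the preimage under the homomorphism $\Abelianize$ of the center $\{\pm\Id\} \subset \GLtZ$. Normality implies that the $\Au$-action on basic triples descends to a well-defined action of the quotient $\Au/\Inne$ on the set of $\Inne$-orbits, which is exactly $\BBB$. Under the identification $\Au/\Inne \cong \Mod$ recalled just above the proposition, this is the desired $\Mod$-action.

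Transitivity of the quotient action is then automatic: given any two ordered superbases $\Inne\cdot(A_1,B_1,C_1)$ and $\Inne\cdot(A_2,B_2,C_2)$, the unique $g \in \Au$ sending $(A_1,B_1,C_1)$ to $(A_2,B_2,C_2)$ projects to an element of $\Mod$ carrying one orbit to the other. For freeness, suppose $g\Inne \in \Au/\Inne$ stabilizes an orbit $\Inne\cdot(A,B,C)$. Then $g\cdot(A,B,C) = h\cdot(A,B,C)$ for some $h \in \Inne$, and by simple transitivity on basic triples we conclude $g = h \in \Inne$, so $g\Inne$ is trivial.

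There is no genuine obstacle here; the proof is entirely formal once one has the simply transitive action of $\Au$ on $\BB$. The only points to verify carefully are the compatibility $g(C) = g(B^{-1}A^{-1}) = g(B)^{-1}g(A)^{-1}$ (so the $\Au$-action really does preserve the set of basic triples) and the normality of $\Inne$ (so the descent to $\Au/\Inne$ is legitimate), both of which are routine.
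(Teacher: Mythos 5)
Your proposal is correct and follows essentially the same route as the paper: reduce to the simply transitive action of $\Au$ on bases (equivalently, basic triples) and pass to the quotient by $\Inne$. If anything, your direct computation of the point stabilizer is slightly cleaner than the paper's ``acts effectively, and hence simply transitively,'' which implicitly relies on the stabilizer being the (normal) center so that it coincides with the kernel.
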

\begin{proof}
The automorphism group $\Au$ acts simply transitively on the set $\BB$
of bases of $\Ft$, which identifies with the set of basic triples. 
Therefore $\Ou\cong\GLtZ$  acts transitively on 

\[ \BBB = \BB/\Inne. \] 
The kernel of this action is the center of $\GLtZ$, 
which is generated by the image  $- \Id\in  \GLtZ$ of (any) elliptic involution.
Thus $\Mod$ acts effectively, and hence simply transitively, on $\BBB$.
\end{proof}

\subsubsection{Ordered and unordered superbases}
Although the natural action of the symmetric group $\SymThree$
on $\Ft\times\Ft\times\Ft$ does not preserve the set of basic triples,
$\SymThree$ does act on superbases:
First, observe that $\SymThree$ acts on basic triples as follows.
Define a $\SymThree$-action on basic triples in terms of the 
$2$-cycles $(12)$ and $(13)$  generating $\SymThree$:
\begin{align}\label{eq:PermutationsOf BasicTriples}
(A,B,C) &\xmapsto{~P_{(12)}} (B^{-1},A^{-1},C^{-1}) \\
(A,B,C) &\xmapsto{~P_{(23)}}  (A^{-1},C^{-1},B^{-1}) \notag
\end{align}
This $\SymThree$-action commutes with the $\Inne$-action on basic triples
and induces a $\SymThree$-action on the set $\BBB$ of ordered superbases.
Define an {\em unordered superbasis\/} as a $\SymThree$-orbit in $\BBB$ of ordered superbases.

Using the mod $2$ reduction, every unordered superbasis admits a {\em canonical ordering.\/}
Namely if $\{x,y,z\}$ is an unordered superbasis, then 
exactly one of $x,y,z$ reduces mod 2 to $[0:1]\longleftrightarrow\infty$,
exactly one of $x,y,z$ reduces mod 2 to $[1:0]\longleftrightarrow 0$, and 
exactly one of $x,y,z$ reduces mod 2 to $[1:1]\longleftrightarrow1$.
Applying a permutation, we may assume that $x \equiv \infty (\mod~2)$,
$y \equiv 0 (\mod~2)$, and $z \equiv 1 (\mod~2)$. 
Call such a superbasis {\em canonically ordered.\/}

\subsubsection{Neighboring superbases}

Two different (unordered) superbases are {\em neighbors\/} 
if they share two elements.  
For example, a neighbor of the superbasis $(X,Y,Z)$ is the superbasis $(X,Y,W)$. If $(A,B,C)$ is an ordered superbasis such that $\psi$ maps $A,B,C$ to $X,Y,Z$, respectivley, then the ordered 
superbasis $(A, B^{-1}, B A^{-1})$ corresponds to the different superbasis $(X,Y, W)$ since $W$ and $Z$ are not conjugates.  
Each superbasis has three superbasis neighbors coming from this construction on 
any pair of the superbasis. 

A pair of neighboring (unordered) superbases corresponds to an {\em edge\/}
in the tree whose vertices are unordered superbases. 
Equivalently such an edge corresponds to an $\Inne$-orbit of bases of $\Ft$.
They also correspond to $\Inne$-orbits of elliptic involutions.

These edges also correspond to the {\em edges\/} in the tiling of proper affine deformation space described later in this paper.

\subsection{Superbases and coordinates on moduli spaces}
Superbases describe global coordinates on moduli spaces of $\SLt$-representations
of $\Ft$ and affine deformations. This remarkable fact was apparently first published
by Vogt~ \cite{vogt1889invariants} and plays a fundamental role in our calculations.

\subsubsection{Affine coordinates on the $\SLt$-character variety}\label{sec:CharVars}
The {\em representation variety\/} $\Hom(\Ft,\SLtC)$ is an affine variety
upon which $\Inn(\SLtC)$ acts by composition.
Let $\Xx(\Ft,\SLtC)$ denote the $\SLtC$-character variety of $\Ft$,
that is, the {\em categorical quotient\/} of $\Hom(\Ft,\SLtC)$ by the 
(conjugation) action of $\Inn\big(\SLtC\big)$. 
That is,
every $\Inn\big(\SLtC\big)$-invariant regular function
on  $\Hom(\Ft,\SLtC)$ arises from a regular function
on  $\Xx(\Ft,\SLtC)$.
A superbasis defines an isomorphism:
\begin{align*}
\Xx(\Ft,\SLtC) & \stackrel{\chi}\longrightarrow \C^3 \\
[\rho] &\longmapsto \bmatrix 
\tr\big(\rho(A)\big) \\
\tr\big(\rho(B)\big) \\
\tr\big(\rho(C)\big) 
\endbmatrix
\end{align*}
(Vogt's paper~\cite{vogt1889invariants} 
seems to be the first published proof of this result.
See~\cite{MR2497777} for a modern discussion of these results
and 
\cite{MR680777}, p. 41,83 for historical discussion of
\cite{vogt1889invariants}. )

Denoting the functions
$\tr\big(\rho(A)\big)$,
$\tr\big(\rho(B)\big)$,
$\tr\big(\rho(C)\big)$
by $\tr_A,\tr_B,\tr_C$ respectively,
the statement above means that
every $\Inn\big(\SLtC\big)$-invariant regular function
on  
\[
\Hom(\Ft,\SLtC) \cong \SLtC \times \SLtC
\]
equals a polynomial 
$f\big(\tr_A,\tr_B,\tr_C\big)$
where $f$ is a polynomial in $3$ variables. 
Restriction of $\chi$
yields  {\em trace coordinates\/}
on the Fricke spaces of surfaces $\surf$ with
$\pi_1(\surf) \cong \Ft$.

Restricting the mapping
\[
\Hom(\Ft,\SLtC) \longrightarrow \Xx(\Ft,\SLtC) 
\]
to the subset of {\em irreducible representations\/} 
gives the quotient mapping for the action of $\Inn\big(\SLtC)$ on 
$\Hom(\Ft,\SLtC)$.
In particular two irreducible representations have the same character
if and only if they are $\SLtC$-conjugate. 
(Compare \cite{MR2497777}.)

\subsubsection{Margulis invariants as coordinates on affine deformations}\label{sec:AlphaCoordinates}

If $\V$ is the flat $\SOto$-vector bundle
determined by the holonomy of a hyperbolic structure on $\surf$ as above, then a superbasis defines an isomorphism
\begin{align*}
H^1(\Ft, \V) & \longrightarrow \R^3 \\
[u] & \longmapsto \bmatrix
\alpha_{[u]}(A) \\
\alpha_{[u]}(B) \\
\alpha_{[u]}(C) \endbmatrix
\end{align*}
where $\alpha_{[u]}$ denotes the Margulis invariant
(as described in \S\ref{sec:MargulisInvariant}) of 
the affine deformation defined by the cohomology
class $[u]$ of a cocycle $u\in Z^1(\Ft,\V)$. 
For the proof, see Lemma~6.1 of 
\cite{MR2653729}.
This may be regarded as the differentiated version of Vogt's theorem.
Since $\SLtR$-lifts of holonomy representations of hyperbolic structures on
non elementary surfaces are irreducible,
elliptic involutions act identically on the tangent bundle $T\FF(\surf)$  to Fricke
space. Therefore every affine deformation $\Ft \longrightarrow\Iso$ admits
a unique Coxeter extension $\Fte\longrightarrow\Iso$ as well.
Compare 
\cite{charette2003affine}.

Let $\Sphere\Ho$ denote the quotient of $\Ho\setminus\{0\}$ under 
positive scalings. By Lemma~\ref{lem:OppositeSign}, together with
scale-invariance, it suffices to consider the quotient space 
of nonzero affine deformations by $\R_+$.


\subsection{Coxeter extensions of  $\Ft$-representations}\label{sec:CoxExtFtRep}
Representations of $\Ft$ in $\SLtC$ closely relate to 
$\PSLtC$-representations of the free product
$\Z/2\star\Z/2\star\Z/2$ of three groups of order two, 
which we  denote by 
\[
\Fte := \langle \iota_1,\iota_2,\iota_0 \mid
\iota_1^2 = \iota_2^2 =\iota_0^2 = 1   \rangle.
\]
Specifically, a generic representation of $\Ft= \langle A, B \rangle$ in $\PSLtC$ extends
to a representation of $\Fte$, where $\Ft\hookrightarrow\Fte$
is defined as follows:
\begin{align*}
A &\longmapsto  \iota_2\iota_0 \\
B &\longmapsto  \iota_0\iota_1 \\
C &\longmapsto  \iota_1\iota_2 
\end{align*}
The notation $\Fte$ emphasizes that
$\Fte$ is the double (split) extension of $\Ft= \langle A,B\rangle$ 
defined by the elliptic involution (which inverts the generators $A$ and $B$):
\[ 
\Fte = \langle A, B, \iota_0 \mid  \iota_0^2 = 1,\ 
\iota_0 A \iota_0 = A^{-1},\ 
\iota_0 B \iota_0 = B^{-1} \rangle
\]
An irreducible representation
$\Ft\xrightarrow{\rho_0}\SLtC$ admits a unique 
{\em Coxeter extension,\/} 
that is, a lift $\Fte\xrightarrow{\rho_0^\e}\SLtC$
such that 
\[ \xymatrix{
\Fte \ar@{-->}[rd]^{\phi^\e} \\
\Ft \ar@{_{(}->}[u]
\ar@{->}[r]^{\rho_0^\e}
&\SLtC }\]
commutes. (Compare~\cite{MR2497777}.)

Here is a simple proof. 
The trace function
\begin{align*}
\SLtC &\longrightarrow  \C \\
X &\longmapsto \tr(X)
\end{align*}
is invariant under both conjugation and inversion.
Since the elliptic involution $\e_{A,B}$ inverts $A$ and $B$ and conjugates $C$,
its acts trivially on $\X$. 
Thus, for any irreducible representation $\rho_0$ as above.
$\rho_0$ and $\rho_0\circ \e_{A,B}$ are conjugate by some $g\in\SLtC$.
Since $\rho_0$ is irreducible, Schur's lemma implies that $g$ is unique.
The extension of $\rho_0$ to $\rho_0^\e$ is defined by mapping $\e_{A,B}$ to
$g$.

Here is another notable consequence.
Since all elliptic involutions are conjugate,
the action of $\Ou\cong\GLtZ$ on $\X$ factors through its quotient
$ \Ou/\langle[\e_{A,B}]\rangle\ \cong\ \Mod$.

%

\section{The   one-holed torus}\label{sec:1ht}
This short section is purely topological.
Topological properties of the one-holed torus $\Soo$ correspond exactly to 
algebraic properties of its fundamental group $\Ft$.
Therefore the algebraic constructions from the \S\ref{sec:F2}
have topological interpretations, presented here:
Primitives correspond to isotopy classes of oriented nonseparating curves.
The {\em mapping class group\/} $\MCG$ is isomorphic to the automorphism group
$\Ou \cong \GLtZ$.
The tree $\T$ corresponds to the {\em pants complex\/}
of $\surf$.

\subsection{The fundamental group}\label{sec:FundGp}
Let $\surf$ be a one-holed torus.
For any basepoint $x_0\in\surf$, 
the fundamental group $\pi_1(\surf,x_0)$ is isomorphic to $\Ft$. 
Changing the basepoint amonts to apply inner automorphisms of $\Ft$,
and free homotopy classes of based curves correspond to conjugacy classes
in $\Ft$.

Choose an oriented nonseparating curve $A$ on $\surf$ starting at $x_0$.
Choose another oriented simple closed curve $B$ intersecting $A$ once at $x_0$,
such that the intersection is transverse.
Then the complement $\surf \setminus A$ compactifies 
to a three-holed sphere $\surf'$ with a quotient map
\begin{equation*}
\surf' \xrightarrow{f} \surf
\end{equation*}
identifying two boundary components
$\partial_+(\surf')$ and $\partial_-(\surf')$  of $\surf'$ to the original curve $A$.
Choose respective points $x_\pm \in \partial_\pm(\surf')$ such
that $f(x_+) = f(x_-)$. 
The preimage $f^{-1}(B)$ is an arc in $\surf'$ joining $x_-$ to $x_+$
which identifies to a simple closed curve $B$ which intersects $A$
in a basepoint
\begin{equation*}
x_0\;:=\; f(x_+)\; =\; f(x_-).
\end{equation*}
Their based homotopy classes define a basis for 
the fundamental group $\pi_1(\surf,x_0)$, 
and, in particular, are primitives.
Furthermore the boundary of $\surf$ is the image of a based loop
corresponding to the commutator $[A,B]\in \pi_1(\surf,x_0)$.

Enlarging the presentation by including two new generators
$C,K$ is useful. 
Namely, let $C$  correspond to a simple loop such that $ABC = 1$ 
in $\pi_1(\surf,x_0)$ to obtain a basic triple.
Let $K$ correspond to a simple loop freely homotopic to $\partial\surf$.
Then the fundamental group admits the following presentation:
\begin{equation}\label{eq:present}
\pi_1(\surf,x_0)\cong \Ft = 
\langle A,B,C,K~\mid~ C=(AB)^{-1},~K=[A,B]\rangle
\end{equation}
with $(A,B,C)$ the corresponding basic triple.

Nielsen~\cite{MR680777} proved that, for any basis
$(A',B')$ of $\pi_1(\surf,x_0)$, the commutator $[A',B']$ is conjugate to either
$[A,B]$ or $[A,B]^{-1}$. Topologically, this means that every self-homotopy equivalence
of $\surf$ is homotopic to a homeomorphism. 
Moreover, the mapping class group $\o{Mod}(\surf)$ of $\surf$ identifies
with $\Ou\;\cong\; \GLtZ$. 

\subsection{Primitives and curve classes}
Let $\Sss^+$ denote the set of free isotopy classes of {\em oriented\/} nonseparating curves.
If $A'$ is a separating curve, applying the classification of surfaces to the 
complement $\surf\setminus A'$ implies that
$A' \simeq \phi(A)$  for some self-homeomorphism $\phi$ of $\surf$.
Representing $A'$ by a based loop determines an element of $\Ft$,
which is also primitive, and well-defined up to conjugacy, that is, the action of
$\Inn(\Ft)$.
The resulting map
\[ \Sss^+ \longrightarrow \Prim/\Inn(\Ft) \]
is an isomorphism, and 
Thus  $\MCG$ acts transitively on $\Sss^+$.

Now we consider {\em unoriented\/} nonseparating curves.
Changing the orientation of a based loop corresponds to inverting
its based homotopy class in $\pi_1(\surf,x_0)$. 
Since the elliptic involution $\e_{A,B}$ inverts $A$, the quotient
$\Prim/\Inne(\Ft)$ corresponds to the set
$\Sss$ of free isotopy classes of {\em unoriented\/} nonseparating curves.

Abelianization $\Abelianize$ topologically corresponds to the map
\[
\pi_1(\surf,x_0)\cong\Ft \longrightarrow H_1(\surf,\Z)\cong \Z^2. \]
Another remarkable property of $\Soo$ is that two oriented nonseparating curves
on $\Soo$ are homologous if and only if they are freely isotopic.
This implies the fact (already mentioned in \S\ref{sec:Primitives}) that
the natural map $\Prim(\Ft) \longrightarrow\Prim(\Z^2)$ induces an isomorphism
\[ 
\Prim(\Ft)/\Inn(\Ft) \cong \Prim(\Z^2). \]
Hence, taking homology classes determines an isomorphism
\[
\Sss^+ \longrightarrow \Prim(\Ft)/\Inn(\Ft) \cong \Prim(\Z^2) \]
Combining this with the discussion in \S\ref{sec:Primitives}, gives an isomorphism:
\[ \Sss \xrightarrow{~\cong~} \Prim(\Ft)/\Inne(\Ft) \cong 
\Prim(\Z^2)/\{\pm\Id\} \cong \Qpo \]

\subsection{Topological superbases}\label{sec:TopSuperB}
Geometric intersection number defines a map
\[ \Sss \times \Sss \xrightarrow{i} \Z_{\ge 0} \]
which, in the case of the one-holed torus $\surf$ corresponds to just the
absolute value of the determinant. 
Namely if $A_i\in\Sss$ corresponds to equivalence classes $\pm(p_i,q_i)$ 
of coprime pairs for $i=1,2$, then
\[ i(A_1,A_2) = \vert p_1q_2 - p_2q_1 \vert. \]
In particular Farey neighbors correspond to curve classes which have
geometric intersection number $1$.

Similarly, superbases admit a topological interpretation.
Since each two-element subset of an unordered basic triple is
a basis of $\Ft$, each element of a superbasis is a primitive element of $\Ft$.
As in \S\ref{sec:FundGp}, primitive elements correspond to noseparating simple 
closed curves on $\surf$. Thus each element of a superbasis corresponds
to a curve class, that is, an isotopy class of an unoriented nonseparating simple closed curve. 
A triple of curve classes
\[ (a,b,c) \in \Sss \times \Sss \times \Sss \]
defines a superbasis if and only if the corresponding geometric intersection numbers 
satisfy \[i(a,b) = i(b,c) = i(c,a) = 1.\]

\section{Hyperbolic one-holed tori}

This section treats the geometry of a hyperbolic surface $\surf \approx \Soo$.
That is, $\surf = \Ht/\Gamma_0$ where $\Gamma_0\subset\SOto$ is a Fuchsian group
freely generated by $\rho_0(A)$ and $\rho_0(B)$.
The basis $(A,B)$ of $\pi_1(\surf,x_0) \cong \Ft$ corresponds to a 
{\em marking\/} of $\surf$.
Primitives in $\Ft$ correspond to nonseparating closed geodesics on $\surf$. 
The elliptic involution $\e_{A,B}$ defines an isometric involution on $\surf$, 
with quotient orbifold a disc with three order two branch points.
This corresponds to extending $\Gamma_0$ to the Coxeter group $\Gamma_0^\e$
generated by inversions $\rho_0^\e(\iot{0}), \rho_0^\e(\iot{1}), \rho_0^\e(\iot{2})$
in three points in $\Ht$.

We show how the basis $(A,B)$ of 
$\pi_1(\surf) \cong \Ft$ determines an ideal triangulation of $\surf$.
This procedure constructs a fundamental domain for $\Gamma_0$ acting on 
its Nielsen convex region, which is an ideal quadrilateral $\QQ$.
Furthermore this ideal quadrilateral is the union of two ideal triangles,
having a common side which is a diagonal of $\QQ$.

Other bases of $\pi_1(\surf,x_0) \cong \Ft$ determine other ideal triangulations,
related by {\em elementary moves\/} on a the generators of the Coxeter group 
$\Gamma_0^\e$. 
The basic operation, a {\em flip,\/} involves replacing the diagonal of $\QQ$
by the other diagonal.

\subsection{The fundamental quadrilateral}\label{sec:FundQuad}

Let $\rho_0$ be a Fuchsian representation for hyperbolic surface $\surf\approx\Soo$
and let $(A,B)$ be a fixed basis for $\pi_1(\surf,x_0)\cong\Ft$.  
As in \S\ref{sec:CoxExtFtRep}, the representation $\rho_0$ extends to a representation
of the double extension $\Fte \supset \Ft$, where $\Fte$ is the Coxeter group
freely generated by involutions $\iot{0},\iot{1},\iot{2}$ and 
$A = \iot{2}\iot{0},B = \iot{0}\iot{1},C = \iot{1}\iot{2}$ is a basic triple.
Let $\t{i}$ denote the future-pointing unit timelike vector fixed by $\iot{i}$ for
$i=0,1,2$, so that $\t{i} = \Fix(\iot{i}) \in \Ht$.

Since the involutions preserve orientation, 
$\rho_0(A),\rho_0(B),\rho_0(C)$ preserve orientation and are transvections,
and any pair of these transvections form a basis.
Furthermore, $\iot{0}$ inverts $\rho_0(A)$, 
so $\t{0}\in\Axis\big(\rho_0(A)\big)$, and similarly for $B$. 
Similar arguments for $\iot{1}$ and $\iot{2}$ imply:
\begin{align*} 
[\t{0}] & = \Axis\big(\rho_0(A)\big) \cap \Axis\big(\rho_0(B)\big) \\
[\t{1}] & = \Axis\big(\rho_0(B)\big) \cap \Axis\big(\rho_0(C)\big) \\
[\t{2}] & = \Axis\big(\rho_0(C)\big) \cap \Axis\big(\rho_0(A)\big).
\end{align*}

Furthermore, the Coxeter extension provides a canonical square root for the boundary generator $K$:
\[ K\  :=\  [A,B]\  =\  (\iot{2}\iot{1}\iot{0})^2 \]
Since $\rho_0$ is the holonomy of a complete hyperbolic structure on $\Soo$,
the boundary holonomy $\rho_0(K)$ is either parabolic or hyperbolic. 
Since the $\rho_0(\iot{i})$ preserve orientation, 
the square root $\rho_0(\iot{2}\iot{1}\iot{0})$ is either parabolic or hyperbolic as well. 

Now we construct a pointed ideal triangle whose edges contain the respective fixed points
$\t{i} = \Fix(\iot{i})$ for $i=0,1,2$: 
Since $\iot{0}\iot{1}\iot{2}$ is not elliptic, choose a point 
$[\vn]\in\partial\Ht$  fixed by $\iot{0}\iot{1}\iot{2}$.
(Here $\vn$ denotes a nonzero null vector in $\V$ and $[\vn]$
its projective equivalence class.) In the case when 
$\iot{0}\iot{1}\iot{2}$ is hyperbolic there are two choices for $[\vn]$, and
 if $\iot{0}\iot{1}\iot{2}$ is parabolic the equivalence class $[\vn ]$ is 
uniquely determined.

Note that $\iot{0}$ interchanges $[\vn]$ and $\iot{0}[\vn]$.
Similarly, $\iot{1}$ interchanges $\iot{0}[\vn]$ and $\iot{1}\iot{0}[\vn] = \iot{2}[\vn]$,
and $\iot{2}$ interchanges $\iot{2}[\vn]$ and $[\vn] = \iot{2}[\vn]$.
Define:
\begin{align*}
\s{0} &:= \vn \times \iot{0}\vn; \\
\s{1} &:= \iot{0}\vn \times \iot{1}\iot{0}\vn; \\
\s{2} &:= \iot{1}\iot{0}\vn \times \vn
\end{align*}
Then $\s{0}^\perp, \s{1}^\perp, \s{2}^\perp$ define the three sides of an ideal triangle
$\Delta$. The respective endpoints of these sides are:
\[ \{[\vn],\iot{0}[\vn]\},\qquad \{\iot{0}[\vn],\iot{2}[\vn]\},  \qquad \{\iot{2}[\vn], [\vn]\}  \]
and these sides contain $\t{0}, \t{1}, \t{2}$ respectively.
We call the sequence
\[
\big([\vn],\iot{2}[\vn],\ \iot{1}\iot{2}[\vn] ,\ \iot{0}\iot{1}\iot{2}[\vn]\  = \ [\vn]\big)
\]
a {\em fixed point cycle.\/}
(See Figure~\ref{fig:FundamentalQuadrilateral}.) 

To obtain the fundamental quadrilateral,
{\em pivot\/} $\Delta$ around $\t{0}$, that is, apply the
the inversion $\iot{0}$ to $\Delta$, obtaining an ideal triangle
$\iot{0}\Delta$. 
The vertices of $\iot{0}\Delta$
form the fixed point cycle 
$(\iot{0}[\vn], \iot{0}\iot{2}[\vn],[\vn])$.
The union
\[
\QQ := \Delta \cup \iot{0}\Delta
\]
is an ideal quadrilateral
which forms a {\em fundamental domain\/} for $\Gamma_0$, which can viewed in 
Figure~\ref{fig:Identifications}.

The complement $\QQ^c$ is the disjoint union of four
halfplanes $\Hh_A^-, \Hh_B^-, \Hh_A^+, \Hh_B^+$ whose
boundaries are the respective sides of $\QQ$:
\begin{itemize}
\item
The side $\partial\Hh_A^-$  contains $\iot{0}\t{2}$ 
and has endpoints $\iot{0}\iot{2}\vn$ and $\iot{0}\vn$;
\item
The side $\partial\Hh_B^-$ contains $\t{1}$ 
and has endpoints $\iot{0}\vn$ and $\iot{2}\vn$;
\item
The side $\partial\Hh_A^+$ contains $\t{2}$ 
and has endpoints $\iot{2}\vn$ and $\vn$;
\item
The side $\partial\Hh_B^+$  contains $\iot{0}\t{1}$ 
and has endpoints $\vn$ and $\iot{0}\iot{2}\vn$.
\end{itemize}

\noindent
Then $A = \iot{2}\iot{0}$ maps $\partial\Hh_A^-$ to $\partial\Hh_A^+$ and
$B = \iot{0}\iot{1}$ maps $\partial\Hh_B^-$ to $\partial\Hh_B^+$.
Furthermore
\begin{align*}
A( \Hh_A^-) & = (\Hh_A^+)^c,\\
B( \Hh_B^-) & = (\Hh_B^+)^c,
\end{align*}
and the attracting/repelling fixed points satisfy:
\begin{align*}
\rho_0(A)^{\pm} & \in \Hh_{\rho_0(A)^{\pm}} \\ 
\rho_0(B)^{\pm} & \in \Hh_{\rho_0(B)^{\pm}}
\end{align*}
By Poincar\'e's theorem on fundamental polygons, 
such an ideal triangle $\Delta$ defines a fundamental domain for 
the Coxeter group 
\[
\Gamma_0^\e := \langle\iot{0},\iot{1},\iot{2}\rangle
\]
generated by the involutions
$\iot{0},\ \iot{1},\ \iot{2}$
acting on the Nielsen convex region
of $\Gamma_0^\e$. 
(Compare Figure~\ref{fig:NielsenRegion}.)

\begin{figure}
\centerline{\includegraphics[scale=.7]
{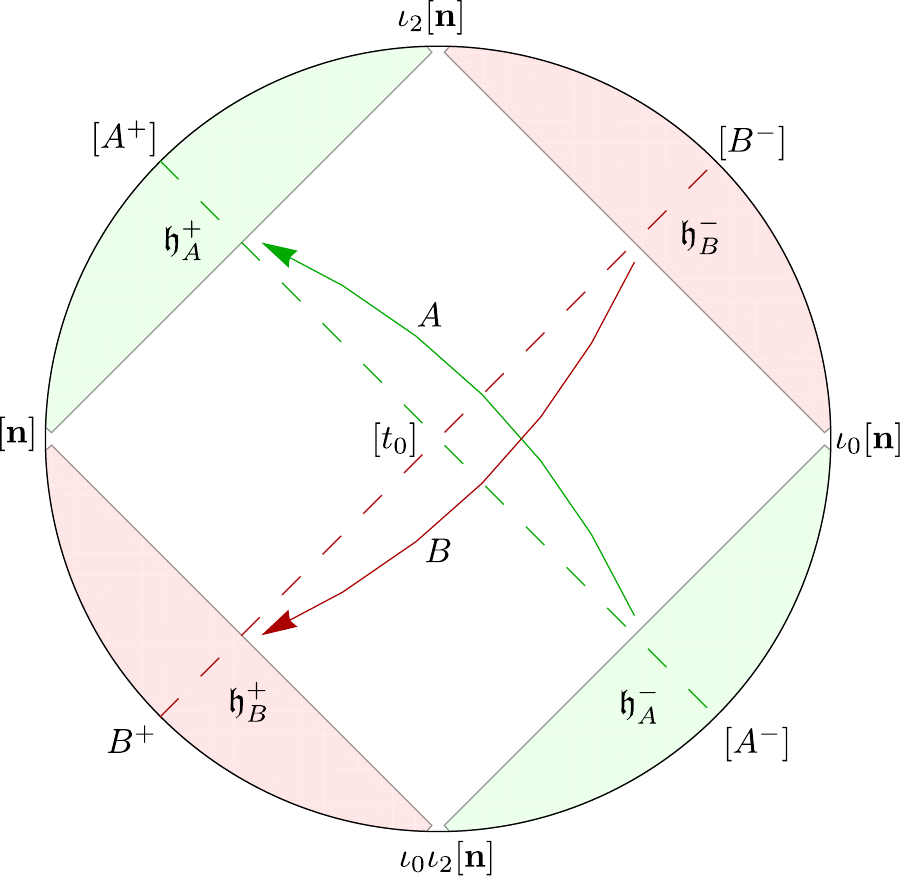}}
\caption{Identifications for a one-holed torus group}
\label{fig:Identifications}
\end{figure}
\begin{figure}
\centerline{\includegraphics[scale=0.7]{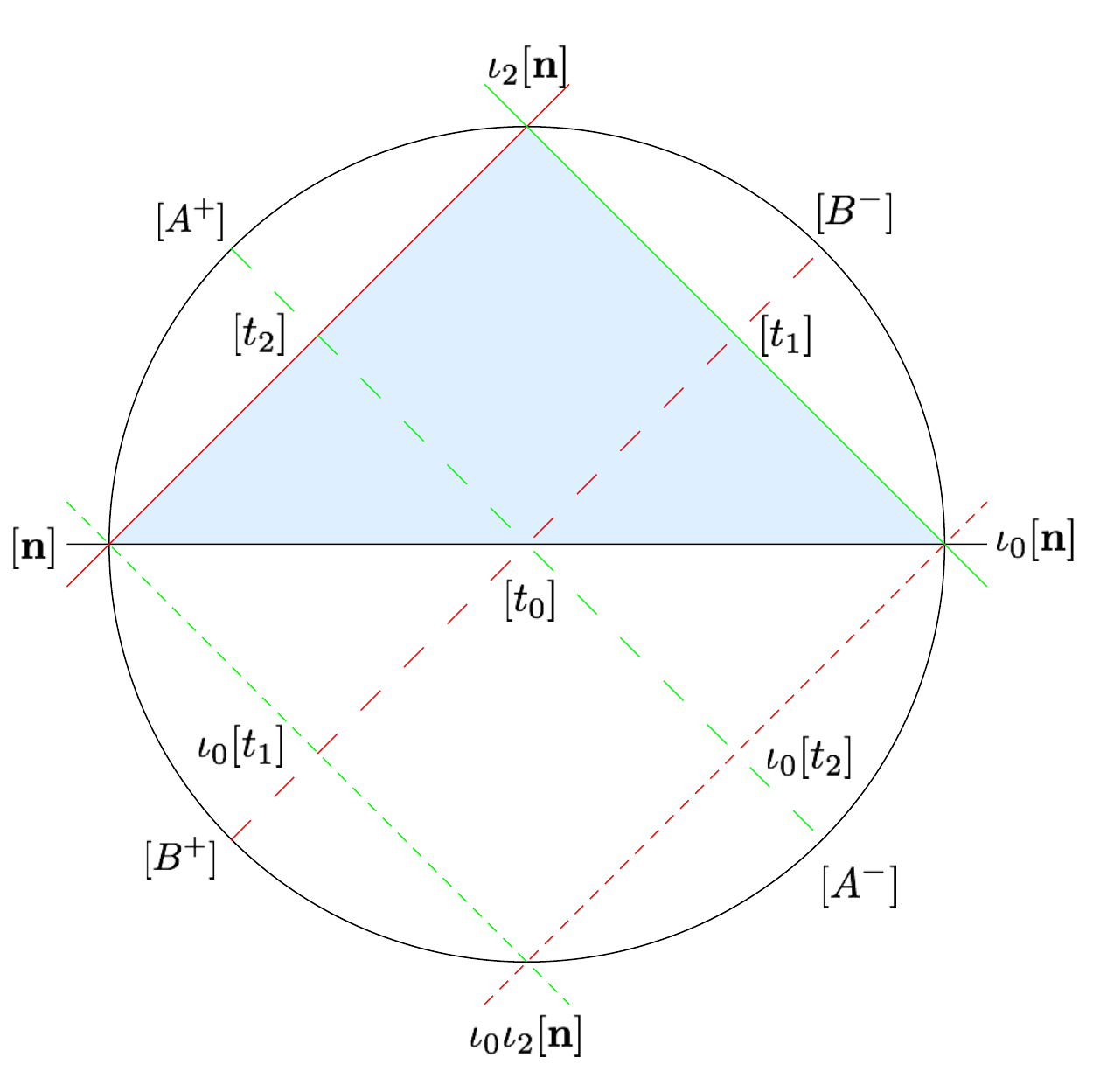}}
\caption{An ideal triangle with three points defining a Coxeter group}
\label{fig:FundamentalQuadrilateral}
\end{figure}

\begin{figure}
\centerline{\includegraphics[scale=0.7]{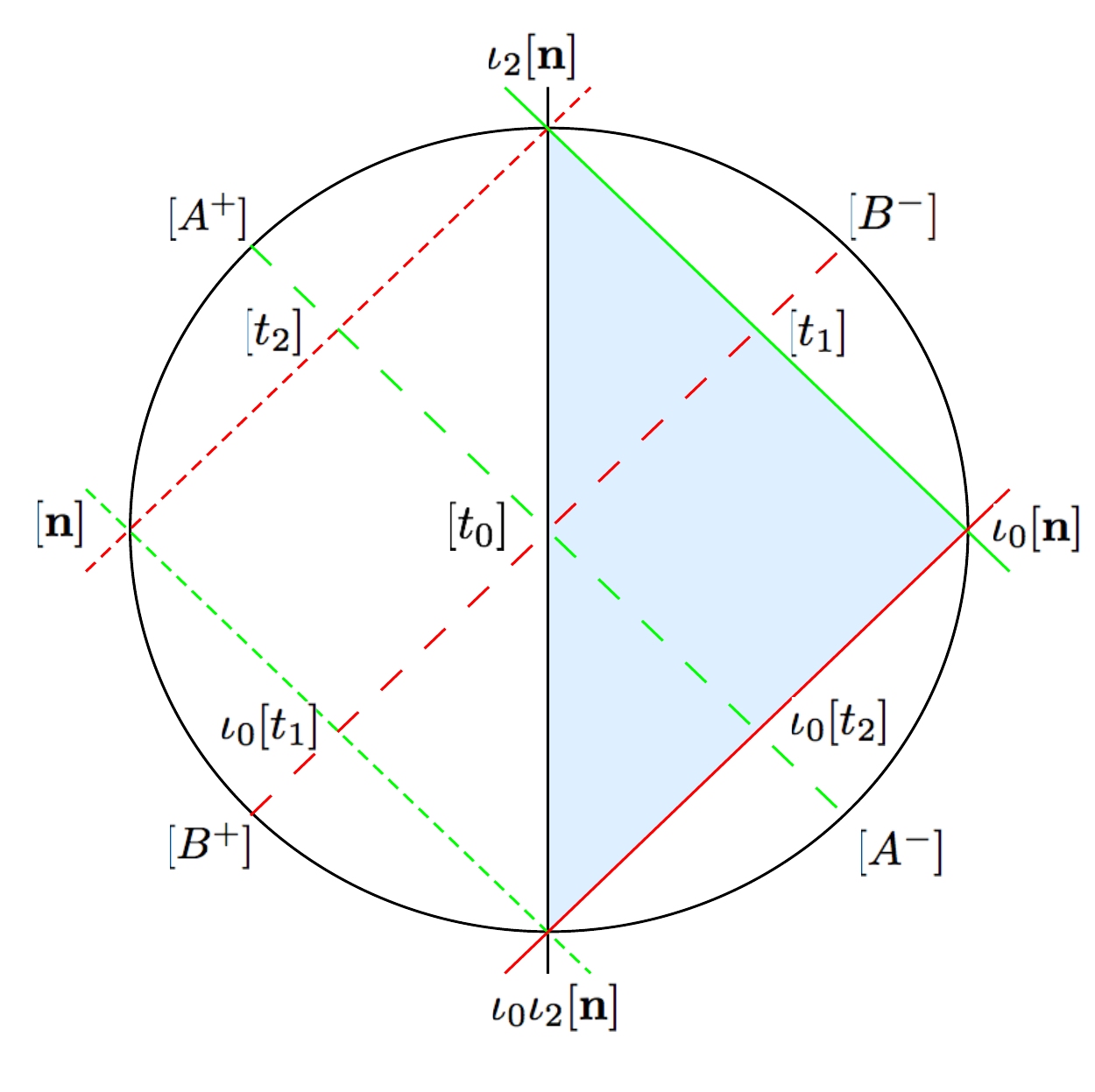}}
\caption{The flipped image of a triangle in an ideal quadrilateral}
\label{fig:FlippedTriangle}
\end{figure}
Geometrically, $\e_{A,B}$ corresponds to the involution of $\surf$  fixing the intersection 
point the simple closed geodesics corresponding to $A$ and $B$ respectively. 
The quotient orbifold with fundamental group $\Gamma_0^\e$ has fundamental domain
the pointed ideal triangle $\Delta$. The generators $\rho_0(\iot{i})$ identify the $i$-th
edge of $\partial\Delta$ with itself, and the quotient is a one-holed disc with three branch
points of order two, corresponding to the orbits of $\t{i}$ respectively.

The $\Gamma_0^\e$-orbit of $\Delta$ defines a decomposition of $\Ht$ into pointed
ideal triangles, that is, and {\em ideal triangulation.\/} 
This ideal triangulation is $\Gamma_0$-invariant and induces an ideal triangulation
of $\surf = \Ht/\Gamma_0$.

\subsection{Flipping the ideal triangulation}\label{sec:FlippingIdealTriangulation}
The vertices of $\QQ$ are (in clockwise order):
\[ [\vn],\quad \iot{2}[\vn],\quad \iot{0}[\vn],\quad \iot{0}\iot{2}[\vn] \]
The diagonal with endpoints $\{ [\vn],\iot{0}[\vn]\}$ divides $\QQ$ into two pointed ideal triangles:
\begin{alignat*}{2}
\Delta: &\text{~vertices~} \{[\vn],\iot{2}[\vn],\iot{0}[\vn]\}, 
&&\text{~points~} \{[\t{0}],[\t{1}],[\t{2}]\};\\
\iot{0}\Delta:&\text{~vertices~} \{[\vn],
\iot{0}\iot{2}[\vn],\iot{0}[\vn]\},
&&\text{~points~} \{[\t{0}],[\iot{0}\t{1}],[\t{2}]\}.
\end{alignat*}
The other diagonal of $\QQ$ has endpoints $\{\iot{2}[\vn],\iot{0}\iot{2}[\vn]\}$ 
and divides $\QQ$ into pointed ideal triangles:
\begin{alignat*}{2}
\Delta': &\text{~vertices~} \{\iot{2}[\vn],\iot{0}[\vn],\iot{0}\iot{2}\vn\}, 
&&\text{~points~} \{[\t{0}],[\t{1}],\iot{0}[\t{2}]\};\\
\iot{0}\Delta':&\text{~vertices~} \{\iot{0}\iot{2}[\vn],[\vn],\iot{2}[\vn]\},
&&\text{~points~} \{[\t{0}],[\iot{0}\t{1}],[\t{2}]\}.
\end{alignat*}
(Compare Figures~\ref{fig:FundamentalQuadrilateral},\ref{fig:FlippedTriangle}
where the flipping corresponds to a clockwise $90^o$ rotation.)

The new pointed ideal triangles determine a new ideal triangulation, as well as a new
superbasis. Namely the involution generators transform as follows:
\[ (\iot{0},\iot{1},\iot{2}) \longmapsto (\iot{0},\iot{0}\iot{2}\iot{0},\iot{1})  \]
and the superbasis transforms:
\[ (A,B,C) \longmapsto 
(B^{-1}, A, A^{-1}B =  A^{-2} C^{-1})  \]

\section{Crooked ideal triangles}\label{sec:CrookedIdealTriangles}

This section introduces {\em crooked ideal triangles,\/}
from which we build fundamental polyhedra 
for proper affine deformations.
This is completely analogous to the the decomposition of
hyperbolic surfaces into ideal triangles in $\Ht$. 
Crooked ideal triangles are polyhedra bounded by crooked
planes, which play a crucial role in this paper.
For the definition and theory of crooked planes, 
see \cite{burelle2012crooked,
MR2653729,charette2011finite,
MR1796126,
drumm1999geometry}.
Crooked planes were first introduced in \cite{drummThesis}.

Crooked ideal triangles {\em linearize\/} to ideal triangles in $\Ht$.
To every ideal triangle $\Delta\subset\Ht$ and a point $p\in\E$,
is a unique {\em minimal crooked ideal triangle\/} bounded by
crooked planes with vertex $p$ which linearizes to $\Delta$.
We prove a structure theorem that every crooked ideal triangle
is a union of a unique minimal crooked ideal triangle and three
{\em parallel crooked slabs.\/}

\subsection{Ideal triangles and crooked ideal triangles}\label{sec:IdealTriangle}


\begin{defn*}
A {\em crooked ideal triangle\/} 
is a region with boundary
$\Tt$  
bounded by three crooked planes
and exterior to three disjoint open halfspaces
\[ \H{0},\H{1},\H{2} \]
whose directing vectors
$\s{0}, \s{1}, \s{2}\in \V$ correspond to the sides of an ideal
triangle $\Delta\subset\Ht$.
\end{defn*}
\noindent
Call $\Delta$ the {\em linearization\/} of $\Tt$,
and denote it
\[ \Delta := \L(\Tt). \]
Conversely, call $\Tt$ an {\em affine deformation\/}  of $\Delta$.
The three crooked planes $\partial\H{i}$ are  the {\em faces\/} of
$\Tt$. 
An affine deformation of an ideal triangle $\Delta$ is 
described up to translational conjugacy by a triple of vectors
(called the {\em vertex triple\/}
and denoted ${(p_0,p_1,p_2)}$) parallel to the respective stems.

\begin{defn*}
A crooked ideal triangle 
\[
\Tt = \big( \H{0}\cup\H{1}\cup\H{2}\big)^c
\]
is {\em nondegenerate\/} 
if the closures of the three crooked halfspaces $\H{0},\H{1},\H{2}$ are pairwise disjoint.
\end{defn*}
\noindent
A nondegenerate crooked ideal triangle is a $3$-dimensional 
PL manifold-with-boundary.
Its boundary consists of three crooked planes.
Furthermore these three crooked planes are disjoint and
pairwise asymptotic. Conversely, three disjoint pairwise asymptotic
crooked planes bound a unique nondegenerate crooked ideal triangle.

To each nondegenerate crooked ideal triangle, we associate a degenerate 
crooked ideal triangle where the faces share a common vertex:

\begin{defn*}
A crooked ideal triangle $\Tt$ is {\em minimal\/} if its three faces
share a common vertex.
\end{defn*}
\noindent
Minimal crooked ideal triangles clearly are determined by the pairs $(O,\Delta)$
where $O\in\E$ is the common vertex and the ideal triangle $\Delta$ is its linearization:
Consider an ideal triangle $\Delta\subset\Ht$ and a point $O\in\E$.
Define the {\em corresponding minimal crooked ideal triangle\/} 
$\Tt(\Delta,O)$ as:
\[
\Tt(\Delta,O) := 
\big( \HH(\vs_0,O) \cup \HH(\vs_1,O) \cup \HH(\vs_2,O)\big)^c
\]
Then the faces of $\Tt(\Delta,O)$  are the crooked planes 
vertexed at $O$ and directed by spacelike vectors coresponding 
to the sides of $\Delta$. 

\subsection{Minimality and nondegeneracy}\label{sec:MinimalNondegeneracy}
Every crooked ideal triangle determines a minimal crooked ideal triangle.
To recover a nondegenerate crooked ideal triangle from its associated
minimal crooked ideal triangle, one attaches 
\emph{parallel crooked slabs} to its faces.  
The goal of \S\ref{sec:MinimalNondegeneracy} 
is the following Structure Theorem for crooked ideal triangles.

\begin{thm}\label{thm:StructureTheorem}
Let $\Tt$ be a crooked ideal triangle
with linearization $\Delta=\L(\Tt)$.
Then there is a unique point $O$ 
(the {\em center\/} of $\Tt$) 
such that
$\Tt$ contains  the minimal crooked ideal triangle
$\Tt(\Delta,O)$.
Futhermore $\Tt$ decomposes uniquely as the union of 
$\Tt(\Delta,O)$
with three parallel crooked slabs attached along the faces of 
$\Tt(\Delta,O)$.
\end{thm}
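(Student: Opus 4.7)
The strategy is to translate the inclusion $\Tt(\Delta,O)\subseteq \Tt$ into three face-wise conditions. Writing $\H{i}=\HH(\s{i},p_i)$ for the three bounding halfspaces of $\Tt$, the faces of $\Tt(\Delta,O)$ are parallel to those of $\Tt$, so $\Tt(\Delta,O)\subseteq \Tt$ reduces to the three parallel containments $\HH(\s{i},p_i)\subseteq \HH(\s{i},O)$ for $i=0,1,2$. Results on parallel crooked halfspaces from \cite{drumm1999geometry, burelle2012crooked} characterize each such containment by the condition that the displacement $p_i-O$ lies in a distinguished closed convex $3$-dimensional cone $K_i\subset\V$ whose extreme rays are the positive stem direction of $\partial\H{i}$ together with its two wing null directions.

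For existence, I plan to exploit the asymptotic structure of $\Delta$. Each pair of sides of $\Delta$ shares an ideal vertex, producing a common null direction $\vn_{ij}$; this $\vn_{ij}$ is a wing extreme ray of both $K_i$ and $K_j$ (up to sign). The nondegeneracy of $\Tt$ is equivalent to inequalities on the vertex triple $(p_0,p_1,p_2)$ placing each difference $p_j-p_i$ on the correct side of the shared wing at the ideal vertex. I identify the candidate $O$ as the simultaneous solution of the three affine equations requiring $O-p_i$ to lie on the wing face of $-K_i$ spanned by $\vn_{ij}$ and $\vn_{ik}$ (with $\{j,k\}=\{0,1,2\}\setminus\{i\}$). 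This reduces to a $3\times 3$ linear system with coefficient matrix formed from the null vectors $\vn_{01},\vn_{12},\vn_{20}$, and a solution exists precisely because the nondegeneracy inequalities are consistent.

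Uniqueness is the main obstacle. A naive recession-cone argument fails since each $K_i-K_i$ spans all of $\V$. Instead, I plan to show directly that any valid center must saturate the three wing-face conditions simultaneously, so the difference of two valid centers lies in the intersection of the three wing planes $F_i=\spn(\vn_{ij},\vn_{ik})$. The key computation is then $F_0\cap F_1\cap F_2=\{0\}$, which reduces to linear independence of the three null vectors $\vn_{01},\vn_{12},\vn_{20}$ in $\V$. This independence will be verified from the identity \eqref{eq:CrossProductIdentity} together with the fact that the three ideal vertices of $\Delta$ are pairwise distinct.

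Once $O$ is established, decomposition is straightforward. For each $i$, the set $S_i := \HH(\s{i},O)\setminus \H{i}$ is bounded by the two parallel crooked planes $\partial\HH(\s{i},O)$ and $\partial\H{i}$, and is by definition a parallel crooked slab. The containments $\H{j}\subseteq \HH(\s{j},O)$ together with pairwise disjointness of the $\H{j}$ force $S_i\cap S_j$ to lie in $\partial\Tt(\Delta,O)$, so the three slabs meet only along faces of $\Tt(\Delta,O)$ and exhaust $\Tt\setminus\Tt(\Delta,O)$. Uniqueness of the decomposition is then inherited from uniqueness of $O$.
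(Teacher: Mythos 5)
The decisive error is your characterization of the parallel containment $\HH(\s{i},p_i)\subseteq\HH(\s{i},O)$ by a \emph{three}-dimensional cone $K_i$ having the stem direction of $\partial\H{i}$ as an extreme ray. The translational semigroup of a crooked halfspace is in fact only \emph{two}-dimensional: by \eqref{eq:TranslationalSemigroup} (quoting \cite{burelle2012crooked}), $\HH(\vs,p)\supseteq\HH(\vs,q)$ forces $q-p$ to lie in the stem quadrant $\V(\vs)=\{\um\vs^- -\up\vs^+ \mid \um,\up>0\}$, which sits inside the stem plane $\vs^\perp$; in particular the component of $q-p$ along $\vs$ must vanish (a translation with nonzero $\vs$-component pushes part of the halfspace across one of its own wings). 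This is not a technicality --- it is the entire content of the theorem. If the containment were governed by a $3$-dimensional cone, the set of admissible centers $\bigcap_i\bigl(p_i-K_i\bigr)$ would be a convex body with nonempty interior and the center would \emph{not} be unique; correspondingly, your plan to ``show directly that any valid center must saturate the three wing-face conditions'' cannot succeed, since an interior point of such a body saturates nothing, and you give no argument for the saturation. As written, the uniqueness half of your proof has an unfillable gap, and the existence half imposes the saturation by fiat rather than deriving it.

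The repair collapses your argument onto the paper's. Your ``wing face spanned by $\vn_{ij}$ and $\vn_{ik}$'' is exactly the stem plane $\s{i}^\perp$ (the two null directions shared with the adjacent sides span the orthogonal complement of $\s{i}$), so the correct containment criterion already forces $p_i-O\in\s{i}^\perp$, with no saturation step needed. Since $(\s{0},\s{1},\s{2})$ is a basis of $\V$, the three stem planes satisfy $\s{0}^\perp\cap\s{1}^\perp\cap\s{2}^\perp=0$, and the $3\times 3$ linear system you set up --- which is precisely Lemma~\ref{lem:StemPlanes} --- has a unique solution $O$ for \emph{any} vertex triple; the pairwise disjointness of the $\H{i}$ then places each $p_i-O$ in the open quadrant $\V(\s{i})$ via the disjointness criterion, giving $\Tt\supseteq\Tt(\Delta,O)$, and your slab decomposition at the end goes through. (Your linear-independence check for $\vn_{01},\vn_{12},\vn_{20}$ is correct --- three distinct null lines cannot lie in a common plane of $\V$ --- but it is doing the same job as the invertibility of the Gram matrix of the $\s{i}$, and neither it nor \eqref{eq:CrossProductIdentity} can substitute for fixing the containment criterion.)
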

\noindent  Theorem~\ref{thm:StructureTheorem}
derives from the theory of crooked halfspaces, 
of which 
a detailed description may be found in 
\cite{burelle2012crooked}.

%
%

\subsubsection{From minimal to nondegenerate}
Start with a minimal crooked ideal triangle. 
To each crooked plane lying on the  boundary,  attach three
{\em parallel crooked slabs,\/}
defined as the regions between two \emph{parallel crooked planes}, that is, two crooked planes differing only by translation.

Let $\vs$ be a unit spacelike vector determining a halfplane in $\Ht$.
For $p_1,p_2\in\E$, 
\[ \HH(\vs,p_1) \supset \HH(\vs,p_2) \]
if and only if the vector $p_2-p_1$ lies in a 2-dimensional convex linear cone
in $\V$, called the 
{\em translational semigroup\/}  $\V(\vs)$ of $\vs$. 
By \cite{burelle2012crooked},
\begin{align}\label{eq:TranslationalSemigroup}
\V(\vs) & = \spn \{\vs^-, -\vs^+\} \notag \\
& = \{ \um \vs^- - \up \vs^+ \mid \um,\up > 0 \}.\end{align}
\begin{defn*}
A {\em  parallel crooked slab\/} is the complement
\[ {\mathsf{ParSlab}}(\vs; p_1,p_2) := 
\overline{\HH(\vs,p_1)} \setminus \HH(\vs,p_2) \]
where $\vs$ is a unit spacelike vector and $p_2 - p_1\in\V(\vs)$.
\end{defn*}
\noindent
Choose consistently oriented unit-spacelike vectors $\vs_0,\vs_1,\vs_2$ corresponding to 
the sides of an ideal triangle and an origin $O\in\E$. 
Then  the  open crooked halfspaces $\HH(\vs_i,O)$ are mutually
disjoint and asymptotic. 
The complement of their union
\begin{equation} \label{eq:MinCIT}
\Tt(\Delta,O) :=  \big(
\HH(\vs_0,O) \cup \HH(\vs_1,O) \cup \HH(\vs_2,O) \big)^c
\end{equation}
is a minimal crooked ideal triangle. 
The general crooked ideal triangle is the complement
\begin{equation} \label{eq:GenCIT}
\Tt\big(\Delta,(p_0,p_1,p_2)\big) :=  
\big(\HH(\vs_0,p_0) \cup \HH(\vs_1,p_1) \cup \HH(\vs_2,p_2) \big)^c
\end{equation}
\noindent
where the vector $p_i - O$ lies in $\V(\vs_i)$. 
Furthermore the crooked ideal triangle 
$\Tt\big(\Delta,(p_0,p_1,p_2)\big)$ is nondegenerate since
$p_i - O\in\V(\vs_i)$. 
It decomposes as a union of  the minimal crooked ideal triangle with three parallel crooked slabs attached along the faces:
\begin{align}\label{eq:CITdecomposition}
\Tt\big(\Delta,(p_0,&p_1,p_2)\big)\;  := \ \Tt(\Delta,O)\ \cup \notag \\
& \big({\mathsf{ParSlab}}(\vs_0; p_0,O)\cup{\mathsf{ParSlab}}(\vs_1; p_1,O)\cup{\mathsf{ParSlab}}(\vs_2; p_2,O)\big).
\end{align}
Call $\Tt(\Delta,O)$ a {\em minimal crooked ideal subtriangle.\/}
We show 
that every nondegenerate crooked ideal triangle arises in this way
(\S\ref{sec:Existence}),
and this decomposition is unique
(\S\ref{sec:Uniqueness}) .

\subsubsection{Existence of a minimal crooked ideal subtriangle}\label{sec:Existence}
Say that three linear planes
$\PP_0,\PP_1,\PP_2\subset\V$ are
{\em in general position\/} if and only if
$\PP_0 \cap \PP_1 \cap \PP_2 = 0$.
Equivalently, three planes are in general
position if they are respectively 
defined by three linearly independent covectors.

\begin{lemma}\label{lem:StemPlanes}
Let $\PP_0,\PP_1,\PP_2\subset\V$
be planes in general position.
Let $(p_0, p_1,p_2)\in \E^3$ be a triple
of points.
Then  %
\begin{align*}
p_0 & =  p +  \qv_0 \\
p_1 & =  p +  \qv_1 \\
p_2 & =  p + \qv_2.
\end{align*}
for a unique $p\in\E$  
and unique $\qv_0\in\PP_0$, $\qv_1\in\PP_1$, $\qv_2\in\PP_2$.
\end{lemma}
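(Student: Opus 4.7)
The plan is to reduce this assertion to a statement of elementary linear algebra by dualizing the planes. For each $i\in\{0,1,2\}$, choose a nonzero linear functional $\ell_i\in\V^*$ whose kernel equals $\PP_i$; such a functional is determined up to a nonzero scalar. Since $\dim\V=3$ and each $\PP_i$ is a $2$-plane, the general position hypothesis $\PP_0\cap\PP_1\cap\PP_2=0$ is equivalent to the assertion that $\{\ell_0,\ell_1,\ell_2\}$ is linearly independent, and hence a basis of the three-dimensional dual space $\V^*$.

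Next, fix an arbitrary reference point $O\in\E$ and identify $\E$ with $\V$ via $q\mapsto q-O$. Write $\vv_i:=p_i-O\in\V$ for $i=0,1,2$, and seek $p$ in the form $p=O+\vw$ for some unknown $\vw\in\V$. The required condition $\qv_i=p_i-p\in\PP_i$ is equivalent to $\ell_i(\vv_i-\vw)=0$, that is, to the linear system
\[
\ell_i(\vw)\ =\ \ell_i(\vv_i),\qquad i=0,1,2.
\]
Because $\{\ell_0,\ell_1,\ell_2\}$ is a basis of $\V^*$, this system has a unique solution $\vw\in\V$. Setting $p:=O+\vw$ and $\qv_i:=\vv_i-\vw$ then yields existence; uniqueness of the decomposition follows immediately because $p$ is uniquely determined and each $\qv_i=p_i-p$ is then forced.

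There is no real obstacle: the only non-formal step is recognizing that ``three $2$-planes in a $3$-space meeting only at the origin'' is dual to ``three functionals spanning $\V^*$,'' after which the conclusion is just the statement that a linear map $\V\to\R^3$ given by a basis of $\V^*$ is an isomorphism. One might also prefer to phrase the proof directly in $\V$ by observing that the map
\[
\V\ \longrightarrow\ (\V/\PP_0)\oplus(\V/\PP_1)\oplus(\V/\PP_2),\qquad \vw\longmapsto\bigl([\vw]_0,[\vw]_1,[\vw]_2\bigr)
\]
has kernel $\PP_0\cap\PP_1\cap\PP_2=0$ and therefore is an isomorphism by dimension count, giving the same conclusion; but the dual-functional version is the most transparent.
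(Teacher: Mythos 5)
Your proof is correct and rests on the same essential mechanism as the paper's: the only nontrivial input is that $\PP_0\cap\PP_1\cap\PP_2=0$ forces injectivity of a linear map between spaces of equal dimension (the paper even records your dual reformulation, noting that general position means the planes are cut out by three linearly independent covectors). The paper packages this as the $9$-dimensional map $\PP_0\oplus\PP_1\oplus\PP_2\oplus\V\to\V\oplus\V\oplus\V$, $(\qv_0,\qv_1,\qv_2,\vv)\mapsto(\qv_0+\vv,\qv_1+\vv,\qv_2+\vv)$, whose kernel computation is exactly your observation; your version first eliminates the $\qv_i$ and solves a $3\times 3$ system for $p$ alone, which is a more economical presentation of the same argument.
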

\noindent
The vertex triple $(\qv_0, \qv_1,\qv_2)$ is said to be {\em normalized\/} 
with respect to the three planes $\PP_0,\PP_1,\PP_2$ and the point $p$.

\begin{proof}
Choose an arbitrary origin $O\in\E$ to identify
$\E$ with the vector space $\V$. 
Then we identify triples
$(p_0, p_1,p_2)\in\E^3$ with a triples $(\qv_0, \qv_1,\qv_2)\ \in\  \in\V^3$.
Given this choice of $O$ we  show \[
(\qv_0, \qv_1,\qv_2)\ \in\  
\PP_0 \times \PP_1 \times \PP_2\ \subset\ \V\times\V\times\V
\]
for a {\em unique\/} vector $\vv\in\V$, such that
\begin{align*}
p_0 & =   \qv_0 + \vv \\
p_1 & =   \qv_1 + \vv \\
p_2 & =   \qv_2 + \vv 
\end{align*}
\noindent
The map 
\begin{align*}
\PP_0 \oplus \PP_1 \oplus \PP_2\oplus\V
&\longrightarrow \V \oplus \V \oplus \V \\
\qv_0 \oplus \qv_1 \oplus \qv_2 \oplus \vv &\longmapsto
(\qv_0 +\vv) \oplus (\qv_1 + \vv)  \oplus (\qv_2 + \vv) 
\end{align*}
is  a linear map between two $9$-dimensional vector spaces, 
with kernel defined by:
\[ (\qv_0 +\vv) =(\qv_1 + \vv)   =  (\qv_2 + \vv) = 0 \]
whence 
\[ \vv = -\qv_0 = - \qv_1 = -\qv_2 \in 
\PP_0 \cap \PP_1 \cap  \PP_2 = 0. \]
Thus the linear map is injective, and hence an isomorphism.
\end{proof}

\begin{lemma}
A crooked ideal triangle contains a minimal crooked ideal triangle.
\end{lemma}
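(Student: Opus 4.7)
The plan is to identify the center $O$ as the unique point provided by Lemma~\ref{lem:StemPlanes} applied to the stem planes of the three faces of $\Tt$. Write the given crooked ideal triangle as
\[
\Tt = \big(\HH(\vs_0,p_0)\cup\HH(\vs_1,p_1)\cup\HH(\vs_2,p_2)\big)^c,
\]
where $\vs_0,\vs_1,\vs_2$ are unit spacelike vectors corresponding to the sides of $\Delta = \L(\Tt)$, and let $\PP_i := \vs_i^\perp$ be the stem plane of the $i$-th face. The three ideal vertices of $\Delta$ are distinct, so no null direction is simultaneously contained in all three $\PP_i$; equivalently, $\vs_0,\vs_1,\vs_2$ are linearly independent and the stem planes are in general position. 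Lemma~\ref{lem:StemPlanes} then yields a unique point $O\in\E$ together with unique vectors $\qv_i\in\PP_i$ with $p_i = O + \qv_i$.

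The main task is to show that each $\qv_i = p_i - O$ lies not only in the plane $\PP_i$ but in the open translational semigroup
\[
\V(\vs_i) \;=\; \{u^-\vs_i^- - u^+\vs_i^+ : u^-,u^+ > 0\}
\]
of \eqref{eq:TranslationalSemigroup}. Granted this, the inclusion criterion for crooked halfspaces preceding \eqref{eq:TranslationalSemigroup} gives $\HH(\vs_i,O) \supset \HH(\vs_i,p_i)$ for every $i$, and therefore
\[
\Tt(\Delta,O) \;=\; \big(\HH(\vs_0,O)\cup\HH(\vs_1,O)\cup\HH(\vs_2,O)\big)^c \;\subset\; \Tt,
\]
exhibiting $\Tt(\Delta,O)$ as the required minimal crooked ideal subtriangle.

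The hard part is precisely this verification that $\qv_i$ lies in the correct sector $\V(\vs_i)$ of the stem plane $\PP_i$, and not in one of the other three open sectors cut out by $\vs_i^-$ and $\vs_i^+$. My approach is to invoke the Disjointness Criterion \disjointnesscriterion from \cite{burelle2012crooked}, which characterizes pairwise disjointness of open crooked halfspaces in terms of the displacements $p_i - p_j = \qv_i - \qv_j$ relative to cones built from $\vs_i^\pm$ and $\vs_j^\pm$. The pairwise disjointness of the $\HH(\vs_i,p_i)$, built into the definition of a crooked ideal triangle, yields three inequalities on the $\qv_i - \qv_j$, and in combination with the normalization $\qv_i \in \PP_i$ from Lemma~\ref{lem:StemPlanes}, these inequalities should pin down the signs of the $\vs_i^\pm$-components of each $\qv_i$. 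An alternative route, which may be cleaner, is by contradiction: if $\qv_{i_0}\notin\V(\vs_{i_0})$, then $\HH(\vs_{i_0},O)$ fails to contain $\HH(\vs_{i_0},p_{i_0})$, and tracing the resulting configuration at $O$ produces a point lying in two of the open halfspaces $\HH(\vs_i,p_i)$ simultaneously, contradicting disjointness.
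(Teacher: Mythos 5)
Your proof is correct and follows essentially the same route as the paper's: normalize the vertex triple via Lemma~\ref{lem:StemPlanes} applied to the stem-planes $\PP_i = \vs_i^\perp$, then use the pairwise disjointness of the crooked halfspaces together with the Disjointness Criterion of \cite{burelle2012crooked} to conclude that each $\qv_i = p_i - O$ lies in the translational semigroup $\V(\vs_i)$, whence $\Tt(\Delta,O)\subset\Tt$. Your added verification of the general-position hypothesis and the explicit monotonicity step $\HH(\vs_i,O)\supset\HH(\vs_i,p_i)$ are details the paper leaves implicit, but the argument is the same.
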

\begin{proof}
Let $\Tt$ denote a crooked ideal triangle,  $\Delta = \L(\Tt)$, is its linearization  and 
$(\s{0},\s{1},\s{2})$ the triple of unit-spacelike vectors describing 
the sides of $\Delta$.  The vertex triple is written 
$(p_0,p_1,p_2)$,  so that $\Tt$ equals the complement 
\[ 
\Tt\ =\ \big(\HH(\s{0},p_0) \cup \HH(\s{1},p_1) \cup \HH(\s{2},p_2)\big)^c. 
\]
Apply Lemma~\ref{lem:StemPlanes} to the stem-planes
$\PP_i := \s{i}^\perp$ to normalize the vertex triple:
by finding a point $p\in\E$ such that $p_i - p$ lies in the
stem-plane $\s{i}^\perp$. Since the halfspaces 
$\HH(\s{i},p_i)$ are pairwise disjoint,
the vectors $p_i-p$ must lie in $\V(\s{i})$ by 
\disjointnesscriterion of \cite{burelle2012crooked}.
In particular $\Tt$ contains 
the minimal crooked ideal triangle $\Tt(\Delta,p)$ as desired.
\end{proof}

\subsubsection{Uniqueness of the minimal crooked ideal subtriangle}\label{sec:Uniqueness}
Now we show that the decomposition \eqref{eq:CITdecomposition} of 
a crooked ideal triangle $\Tt$ into a minimal crooked ideal triangle and
three parallel crooked slabs is unique. 


First, note that when one crooked ideal triangle lies inside another, 
they have the same linearization.

\begin{lemma}\label{lem:linearization}
Suppose $\Tt, \Tt' \subset \E$ are crooked ideal triangles
such that $\Tt\subset \Tt'$. 
Then $\L(\Tt) = \L(\Tt')$. 
\end{lemma}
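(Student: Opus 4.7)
The plan is to compare the three crooked halfspaces cut out by $\Tt^c$ with those cut out by $\Tt'^c$ and argue that their unit-spacelike directing vectors coincide as unordered triples; this forces the two ideal triangles $\L(\Tt)$ and $\L(\Tt')$ to have the same three sides.

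Write
\[ \Tt^c = \HH_0\cup\HH_1\cup\HH_2, \qquad \Tt'^c = \HH_0'\cup\HH_1'\cup\HH_2', \]
where $\HH_i = \HH(\vs_i,p_i)$ and $\HH_i' = \HH(\vs_i',p_i')$ are the defining open crooked halfspaces, and the unit-spacelike vectors $\vs_i$ (respectively $\vs_i'$) correspond to the sides of $\L(\Tt)$ (respectively $\L(\Tt')$). The inclusion $\Tt\subset\Tt'$ yields $\bigcup_i \HH_i' \subset \bigcup_j \HH_j$. Each crooked halfspace $\HH_i'$ is connected, while the three open halfspaces $\HH_0,\HH_1,\HH_2$ are pairwise disjoint by the definition of a crooked ideal triangle. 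Hence for every $i$ there is a unique $\phi(i)\in\{0,1,2\}$ with $\HH_i' \subset \HH_{\phi(i)}$.

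The heart of the proof is the claim that $\HH(\vs,p)\subset\HH(\vt,q)$, for unit-spacelike $\vs,\vt$ taken with the common orientation used to describe sides of ideal triangles, forces $\vs = \vt$. Granting this, $\vs_i' = \vs_{\phi(i)}$ for $i=0,1,2$; and since the three sides of an ideal triangle are distinct, the $\vs_i'$ are pairwise distinct, so $\phi$ must be a permutation of $\{0,1,2\}$. Consequently $\{\vs_0',\vs_1',\vs_2'\} = \{\vs_0,\vs_1,\vs_2\}$, and the ideal triangles cut out by these respective triples of sides coincide, i.e.\ $\L(\Tt') = \L(\Tt)$.

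The main obstacle is the containment-forces-equality claim. When $\vs=\vt$, \eqref{eq:TranslationalSemigroup} gives the precise criterion $q-p\in\V(\vs)$, but the different-directing-vectors case requires ruling out any containment at all. The natural approach is to inspect the asymptotic trace of a crooked halfspace on the sphere of directions $\Sphere(\V)$ at infinity: this trace consists of the closure of its two null wing directions together with its stem directions, all determined by the directing vector alone. An inclusion of crooked halfspaces induces an inclusion of asymptotic traces, and the traces corresponding to distinct unit-spacelike vectors of the same orientation are readily seen to be incomparable. Alternatively, one can extract the statement directly from the disjointness and containment theory of crooked halfspaces developed in \cite{burelle2012crooked} and already invoked earlier in the paper.
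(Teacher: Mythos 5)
Your reduction to containments of individual crooked halfspaces is exactly how the paper begins: from $\bigcup_i\H{i}'\subset\bigcup_j\H{j}$, connectedness of each $\H{i}'$ and pairwise disjointness of the $\H{j}$ give a unique $\phi(i)$ with $\H{i}'\subset\H{\phi(i)}$. The gap is the step you yourself call the heart of the proof: the claim that $\HH(\vs,p)\subset\HH(\vt,q)$ forces $\vs=\vt$. This is false. A crooked plane bounds two crooked halfspaces, directed by $\vs$ and $-\vs$, so $\E\setminus\overline{\HH(\vs_0,p_0)}=\HH(-\vs_0,p_0)$. Take any nondegenerate crooked ideal triangle, whose complementary halfspaces $\HH(\vs_0,p_0)$ and $\HH(\vs_1,p_1)$ have disjoint closures; then
\[
\HH(\vs_1,p_1)\ \subset\ \E\setminus\overline{\HH(\vs_0,p_0)}\ =\ \HH(-\vs_0,p_0),
\]
a containment of crooked halfspaces whose directors $\vs_1$ and $-\vs_0$ are linearly independent. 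The same example defeats the proposed argument via traces at infinity: the asymptotic trace of $\HH(\vs_1,p_1)$ is contained in that of $\HH(-\vs_0,p_0)$, so the traces of halfspaces with distinct directors are not ``readily seen to be incomparable.'' The orientation caveat does not rescue the claim either, since every unit-spacelike vector (including $-\vs_0$) is the outward director of a side of some ideal triangle, and the directors of $\Tt$ and $\Tt'$ are normalized with respect to two a priori different triangles.

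What a containment of crooked halfspaces actually yields is only a containment of their linearizations, $\Hh(\vs)\subset\Hh(\vt)$ as halfplanes in $\Ht$; this is Corollary 5.5 of \cite{burelle2012crooked}, which is the tool the paper invokes. That conclusion is strictly weaker than $\vs=\vt$ (halfplanes nest freely), but it suffices once you add the missing combinatorial ingredient: after reindexing so that $\phi(i)=i$, the ideal boundary arc of $\Hh_i'$ is contained in that of $\Hh_i$, and since each of the two triples of closed arcs covers $\partial\Ht$ with pairwise disjoint interiors, all three containments must be equalities; hence the sides agree and $\L(\Tt)=\L(\Tt')$. Without this covering argument on $\partial\Ht$ (or some substitute for your false rigidity claim), the proof does not close.
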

\begin{proof}
Let $\H{i}, \H{i}'$ be the halfspaces complementary to 
$\Tt, \Tt'$ respectively.
Then, by hypothesis,
\[
\H{i}' \subset
\H{0} \cup \H{1} \cup \H{2} 
\]
for $i=0,1,2$. Since the $\H{j}$ are disjoint, each 
$\H{i}'$ lies in a unique $\H{{j(i)}}$. 
Re-index the $\H{{j(i)}}$ so that $j(i)=i$, that is,
\[
\H{i}' \subset \H{i}.
\]
By \orderpreserving of  
\cite{burelle2012crooked},
\begin{equation}\label{eq:hyperbolicinclusion}
\L(\H{i}')\subset \L(\H{i})
\end{equation}
for $i=0,1,2$. 

Denote the linearizations $\L(\Tt), L(\Tt')$ by
$\Delta,\Delta'$ respectively. 
The sides of $\Delta$ (respectively $\Delta'$) are
$\partial\Hh_i$ (respectively $\partial\Hh_i'$),
using the notation of \S\ref{sec:IdealTriangle}.
By \eqref{eq:hyperbolicinclusion},
each side $\partial\Hh_i'$ of $\Delta'$ is an ideal interval contained in
the side $\partial\Hh_i$ of $\Delta$.
As each of these triples of sides partition $\partial\Ht$, 
this implies that $\Delta = \Delta'$ as claimed.
\end{proof}
\begin{proof}[Conclusion of the proof of Theorem~\ref{thm:StructureTheorem}]
Let $\Tt$ be a crooked ideal triangle with linearization $\Delta$.
It remains to show that the decomposition \eqref{eq:CITdecomposition}
is unique. 
Suppose that  $\Tt(\Delta,O)$ and  $\Tt(\Delta',O')$ are minimal crooked ideal
triangles contained in $\Tt$; we must show that
$\Tt(\Delta,O) = \Tt(\Delta',O')$.

Lemma~\ref{lem:linearization} implies $\Delta' = \Delta$, so it remains to show
that the centers coincide: $O' = O$. This follows from the construction in 
Lemma~\ref{lem:StemPlanes}.
Observe that the sides of $\Delta$ determine the stem-planes 
$\PP_i = \vs_i^\perp$, and the vertex triple $(p_0,p_1,p_2)$ of $\Tt$ satisfies
\begin{align*}
p_i & \in \PP_i + O \\
p_i & \in \PP_i + O' 
\end{align*}
for $i=0,1,2$.
Uniqueness in Lemma~\ref{lem:StemPlanes} implies that $O' = O$  as claimed.
\end{proof}

\subsection{The deformation space of crooked ideal triangles}\label{sec:DefCIT}
Theorem~\ref{thm:StructureTheorem}  precisely
describes the moduli of crooked ideal triangles. 
First we review some well-known facts about ideal triangles in $\Ht$.

An ideal triangle $\Delta\subset\Ht$ is complementary to three disjoint
open halfplanes $\Hh(\s{0}), \Hh(\s{1}), \Hh(\s{2})\subset\Ht$,
whose bounding geodesics
$\partial\Hh(\s{i})$ are pairwise asymptotic. 
The defining unit-spacelike vectors form a basis
$(\s{0}, \s{1}, \s{2})$ of $\V$ with Gram matrix
\[
\bmatrix 1 & -1 & -1 \\ -1 & 1 & -1 \\ -1 & -1 & 1 \endbmatrix.
\]
The group $\SOto$ acts simply transitively on ideal triangles
(since it acts simply transitively on such bases). 

Fix an ideal triangle $\Delta$ as above. 
An origin $O\in\E$ determines a minimal crooked ideal triangle
$\Tt(\Delta,O)$ by \eqref{eq:MinCIT}.
An affine deformation of $\Delta$ is now determined by a
vertex triple $(p_0,p_1,p_2)\subset \E^3$,
given by \eqref{eq:GenCIT}.
By normalizing the vertex triple we find an origin such that 
\[
\Tt\big(\Delta,(p_0,p_1,p_2)\big) \supset \Tt(\Delta,O). 
\]
The vertex triple determines a triple of vectors 
\[ \qv = (\qv_0,\qv_1,\qv_2)\in \V^3 \] by: 
\begin{align}\label{eq:CITcondition}
\qv_i & := p_i - O  \notag \\
&= \um_i \vs_i^- - \up_i\vs_i^+\in \V(\vs_i),
\end{align}
that is, $\um_i,\up_i>0$ for $i = 0,1,2$.
Thus nondegenerate crooked ideal triangles
corresponds to pairs 
$\big(\Delta,(p_0,p_1,p_2)\big)$ as above,
or equivalently to triples 
$(\Delta,O, \qv)$ 
where 
\begin{align*}
\qv &=  (\qv_0,\qv_1,\qv_2) \\
& \in \V(\vs_0) \times \V(\vs_1) \times \V(\vs_2) \subset \V^3.
\end{align*}
The $\Iso$-equivalence classes form a moduli space
$\V(\vs_0) \times \V(\vs_1) \times \V(\vs_2)$
parametrized by triples 
$\qv\in \V^3$ satisfying \eqref{eq:CITcondition}.
This moduli space is an open orthant in $\R^6$, 
with coordinates
\[(\up_0,\um_0,\up_1,\um_1,\up_2,\um_2)\in \R_+^6.\]
Its quotient by homotheties $\R_+$ consists of $\Sim$-equivalence classes,
and identifies with a 
$5$-simplex in the double cover of projective $5$-space.

\section{Affine deformations}\label{sec:IdealTrianglesCoxeterGroups}
Two-generator discrete subgroups $\Gamma_0$ of $\SOto$ always extend
to Coxeter groups  $\Gamma_0^\e$  acting on $\Ht$. 
Similarly, an affine deformation $\Gamma$  of $\Gamma_0$  extends to an
affine deformation $\Gamma^\e$  for  $\Gamma_0^\e$: 
\[
\begin{CD}
 \Gamma^\e  @>>>  \Gamma_0^\e  \\
    @VVV  @VVV \\
   \Gamma @>>>  \Gamma_0\\
\end{CD}
\]
\noindent
The discrete Coxeter group   $\Gamma_0^\e$ admits  an ideal triangle 
as a fundamental polygon. 
Similarly, its proper affine deformation $\Gamma^\e$ may admit a crooked ideal triangle as a fundamental polyhedra.
Now we describe these Coxeter groups, 
their fundamental polygons and their affine deformations.
In particular, Lemma~\ref{lem:alpha} computes the Margulis-invariant  parameters
for these affine deformations. 
These relate to the parameters in the deformation space of crooked ideal triangles in a crooked fundamental domain, as descrbed in the previous section.

\subsection{Affine deformations of involution groups}
Affine deformations of $\Gamma_0^\e$ correspond to affine deformations
of the fundamental ideal triangle $\Delta$.
Suppose that $\Tt$ is a crooked ideal triangle with linearization
$\Delta$ and vertex triple $(p_0,p_1,p_2)$.
Let $[\t{0}], [\t{1}], [\t{2}]$ be three points on the respective sides of
$\Delta$ as above. For $j=0,1,2$ the unique representative $\t{j}$ 
of $[\t{j}]$ containing $p_j$ is a particle on the $j$-th face of $\Tt$.
The corresponding involutions $\tiota_j$ defined by \eqref{eq:ParticleInvolution}
generate an affine deformation $\Gamma^\e$ of $\Gamma_0^\e$. 

Suppose $\Tt$ is a nondegenerate crooked ideal triangle.
By~\cite{drummThesis,drumm1992fundamental,drumm1993linear} 
(and also \cite{MR1796126}), 
the affine deformation $\Gamma^\e$ acts properly on $\E$ with crooked fundamental
polyhedron $\Tt$. 
Affine isometry classes of these structures form a convex
set parametrized by  triples 
\[
\qv \ =\  (\qv_0,\qv_1,\qv_2)\ \in\ \V\times\V \times \V
\] satisfying \eqref{eq:CITcondition} (compare \S\ref{sec:DefCIT}).


The compositions $\tiota_j\tiota_{j+1}$ for $j=0,1,2\  ( \mod\  3)$,
are hyperbolic affine isometries (affine boosts):
\begin{align}\label{eq:AffCox}
\rho(A) &:= \tiot{2}\tiot{0}, \notag\\
\rho(B) &:= \tiot{0}\tiot{1}, \notag\\
\rho(C) &:= \tiot{1}\tiot{2},
\end{align}
and satisfy the relation $\rho(A)\rho(B)\rho(C) = 1$.
Denote the unit-spacelike neutral eigenvectors of their linear parts by:
\begin{align*}
\Ao & := \rho_0(A)^0 \\
\Bo & := \rho_0(B)^0 \\
\Co & := \rho_0(C)^0 \end{align*}

\begin{lemma}\label{lem:alpha}
The Margulis invariants of the affine deformation $\rho$ are:
\begin{align*}
\alpha\big(\rho(A)\big) &:= 2 (\qv_2 - \qv_0) \cdot \Ao \\
\alpha\big(\rho(B)\big) &:= 2 (\qv_0 - \qv_1) \cdot \Bo \\
\alpha\big(\rho(C)\big) &:= 2 (\qv_1 - \qv_2) \cdot \Co 
\end{align*}
\end{lemma}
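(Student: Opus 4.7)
The plan is to set up the calculation directly from formula \eqref{eq:ParticleInvolution} for the involutions in particles, compute the translational parts of the compositions in \eqref{eq:AffCox}, and then exploit an orthogonality that reduces the resulting expression to the desired form. By the cyclic symmetry of the definitions, it suffices to treat $\rho(A)=\tiot{2}\tiot{0}$; the analogous formulas for $B$ and $C$ follow by relabeling.

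Choose the center $O$ of $\Tt$ as origin, so each point $p_j$ is identified with the vector $\qv_j=p_j-O$. Writing $M_j$ for the linear part of $\tiot{j}$,
\[
M_j(\vv)\;=\;-\vv-2(\vv\cdot\t{j})\,\t{j},
\]
the involution becomes $\tiot{j}(q)=M_j q+(I-M_j)\qv_j$. Composing,
\[
\rho(A)(q)\;=\;M_2M_0\,q\;+\;(M_2-M_2M_0)\qv_0\;+\;(I-M_2)\qv_2,
\]
so the translational part of $\rho(A)$ is
\[
\vv_A\;=\;(M_2-M_2M_0)\qv_0\;+\;(I-M_2)\qv_2.
\]
Expanding each application of $M_j$ using its explicit form, all contributions are linear combinations of $\qv_0,\qv_2,\t{0},\t{2}$; in particular $\vv_A=2(\qv_2-\qv_0)+(\text{scalar})\t{0}+(\text{scalar})\t{2}$.

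The main observation is then this: from \S\ref{sec:FundQuad}, the points $[\t{0}]$ and $[\t{2}]$ both lie on $\Axis(\rho_0(A))$, because $[\t{0}]=\Axis(\rho_0(A))\cap\Axis(\rho_0(B))$ and $[\t{2}]=\Axis(\rho_0(C))\cap\Axis(\rho_0(A))$. Hence $\t{0}$ and $\t{2}$ are both orthogonal to the unit-spacelike neutral eigenvector $\Ao=\rho_0(A)^0$, which directs the axis. Taking the inner product of $\vv_A$ with $\Ao$ annihilates all the $\t{0}$- and $\t{2}$-terms, leaving
\[
\alpha(\rho(A))\;=\;\Ao\cdot\vv_A\;=\;2(\qv_2-\qv_0)\cdot\Ao,
\]
as claimed. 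The same computation with the appropriate pair of involutions, together with $\Bo\perp\t{0},\t{1}$ and $\Co\perp\t{1},\t{2}$, yields the formulas for $\alpha(\rho(B))$ and $\alpha(\rho(C))$.

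The only potential pitfall is a sign/orientation ambiguity in the definition of $X^0$: the sign condition using $\Det(\vu,X\vu,X^0)$ fixes $\Ao$ uniquely, and a quick check that the three particles $\t{0},\t{1},\t{2}$ are consistently oriented (as future-pointing unit timelike vectors, per \S\ref{sec:FundQuad}) together with the consistent orientation of the $\s{i}$ used to parametrize the vertex triple ensures that the factor of $2$ comes out with the correct sign throughout. That bookkeeping, rather than the algebra, is the main thing to watch out for; the core mechanism is simply the pairwise orthogonality between the neutral eigenvector of each generator and the two fixed-timelike vectors of the involutions that produce it.
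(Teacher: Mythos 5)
Your proof is correct and follows essentially the same route as the paper's: both rest on the explicit involution formula \eqref{eq:ParticleInvolution} together with the orthogonality $\t{j}\cdot\Ao=0$ forced by $[\t{j}]\in\Axis\big(\rho_0(A)\big)$. The only difference is one of economy: the paper evaluates $\rho(A)=\tiot{2}\tiot{0}$ at the point $p_0$, which is fixed by $\tiot{0}$, so that only the single involution $\tiot{2}$ needs expanding and only $\t{2}\cdot\Ao=0$ is invoked, whereas you compute the full translational part at the center $O$ and use both $\t{0}\cdot\Ao=0$ and $\t{2}\cdot\Ao=0$.
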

\begin{proof}
Applying \eqref{eq:AffCox} to $p_0$, 
\[
\rho(A): p_0   \longmapsto \tiota_2(p_0) ,
\]
since $p_i\in \vt_i$ is fixed by $\tiota_i$.
Apply $\eqref{eq:involution}$ to $\tiota_2$ by noting that the
involution fixes the line through $p_2$ parallel to 
$\t{2}$ (set  $\vv= p_0 - p_2$ and $\vu = \t{2}$):
\begin{align}\label{eq:inv} 
 \tiota_2(p_0)  = & \tiota_2\big(p_2 + (p_0-p_2)\big) \notag \\
  & =  p_2 - (p_0 -p_2) + 
2 (p_0- p_2) \cdot \t{2} /(\t{2}\cdot \t{2})\;  \t{2} \notag \\
& \equiv 2 p_2 - p_0 \ (\mod\ \vt_2).
\end{align}
Now 
\[
\Axis\big(\rho(C)\big)\ \cap\ \Axis\big(\rho(A)\big)\ =\ \{\vt_2\}
\]
whence $\vt_2 \in\Axis\big(\rho(A)\big)$
implies that $\vt_2 \cdot \Ao = 0$. Thus: 
\begin{alignat*}{2}
\alpha\big(\rho(A)\big) & =
\big( \tiota_2(p_0) - p_0\big) \cdot \Ao && \qquad\text{~by~\eqref{eq:AffCox}}  \\
&  =
\big((2 p_2 - p_0)-p_0\big) \cdot \Ao 
&& \qquad\text{~by~\eqref{eq:inv}}  \\
&  =
\big(2 (p_2 - p_0)\big) \cdot \Ao.
\end{alignat*}
Now $p_2-p_0 = \qv_2 - \qv_0$ so
\[ 
\alpha\big(\rho(A)\big) = 2 (\qv_2-\qv_0)\cdot \Ao,
\]
as desired. The cases $\alpha\big(\rho(B)\big)$ and $\alpha\big(\rho(C)\big)$ are 
completely analogous. 
\end{proof}

\subsection{Affine deformations of ideal triangle Coxeter groups}
We have seen how to pass from an ideal triangle fundamental domain 
for the action of the Coxeter group to a fundamental quadrilateral of a
one-holed torus group.  
To begin the investigation of affine deformations of these groups, 
start again with  Coxeter groups and ideal triangles.

The vertices of the ideal triangle $\Delta$ correspond to fixed points
of commutators of basic pairs in $\Ft$ as follows.
\begin{align*}
\Kplus & = \xp{[A,B]} = [\vn] \\
\bm(\Kplus) & = \xp{[\bm,A]} = \iot{2}[\vn] \\
C(\Kplus) & =\xp{[\am,\bm]} = \iot{0}[\vn]
\end{align*}
where $\vn$ is defined as in \S\ref{sec:FundQuad}.
Define affine deformations 
$\tideal$ of the ideal triangle $\Delta\subset\Ht$,
which depend on a choice of a vertex triple
$(p_0, p_1,p_2)\in \E^3$
for a crooked ideal triangle $\ideal$
as in \S\ref{sec:CrookedIdealTriangles}.

\subsubsection{Choosing a vertex triple}
For the above ideal triangle $\Delta$, 
consider the three {\em stem-planes\/}
\begin{align*}
\s{1}^\perp &:= \R\big(\iot{2}\vn\big) + \R\big(\iot{0}\vn\big) \\
\s{2}^\perp &:= \R\big( \iot{2}\vn\big)  + \R\big( \vn\big) \\
\s{0}^\perp &:= \R\big(\vn\big) + \R\big(\iot{0}\vn\big).
\end{align*}
\noindent
Since $\vn, \iot{2}\vn, \iot{0}\vn$ base $\V$,
these three stem-planes are mutually transverse.
The translational semigroups 
\[ \V(\s{1})\subset \vs_1^\perp,\quad
\V(\s{2})\subset \vs_2^\perp,\quad
\V(\s{0})\subset\vs_0^\perp  \]
of the respective crooked halfspaces (as in \cite{burelle2012crooked})
are  quadrants:
\begin{align*}
\V(\s{1})  &:= \R^+ \big(-\iot{0}\vn\big) + \R^+\big(\iot{2}\vn\big) \\
\V(\s{2}) &:= \R^+\big(-\iot{2}\vn\big) + \R^+\big(\vn\big) \\
\V(\s{0}) &:= \R^+\big(-\vn\big) + \R^+ \big(\iot{0}\vn\big).
\end{align*}
Apply Lemma~\ref{lem:StemPlanes}  to  choose an origin $O\in\E$,
and vectors $\qv_0,\qv_1,\qv_2$ normalizing the vertex triple $p_i = O + \qv_i$:
\begin{alignat}{2}\label{eq:VertexTriple}
\qv_1&:= -\up_1\  (\iot{0}\vn)    &&+ \um_1\ (\iot{2}\vn) \notag\\
\qv_2 &:= -\up_2\  (\iot{2}\vn) &&+ \um_2( \vn) \\
\qv_0 &:= -\up_0\ (\vn)          &&+  \um_0\ (\iot{0}\vn) \notag 
\end{alignat}
where $\up_i,\um_i\in\R$.

The vertex triple $(p_1,p_2,p_0)$ defines three crooked halfspaces 
\[ \HH_i = \HH(\vs_i,p_i)\subset\E \]
where $\qv_i$ is the unit spacelike vector corresponding to the halfplane 
$\HI\subset\Ht$.

\begin{lemma}\label{lem:disjointhalfs}
When all the coefficients $\up_i,\um_i > 0$ (that is, when
$p_i\in\V(\s{i})$), then 
the crooked halfspaces $\HH_i$ are pairwise disjoint.
\end{lemma}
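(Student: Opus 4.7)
The plan is to deduce disjointness from two ingredients already recorded in the paper: first, the three open crooked halfspaces $\HH(\s{0}, O), \HH(\s{1}, O), \HH(\s{2}, O)$ of the minimal crooked ideal triangle $\Tt(\Delta, O)$ are mutually disjoint (by the very definition of minimal crooked ideal triangle in \S\ref{sec:IdealTriangle}); and second, the translational semigroup $\V(\s{i})$ is precisely the cone of vectors $\vv$ for which $\HH(\s{i}, q+\vv) \subset \HH(\s{i}, q)$, as recorded in \eqref{eq:TranslationalSemigroup}. Given these, it suffices to verify that $p_i - O = \qv_i \in \V(\s{i})$ for each $i$, since then
\[
\HH_i \;=\; \HH(\s{i}, p_i) \;\subset\; \HH(\s{i}, O),
\]
and the three outer halfspaces are pairwise disjoint.

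To verify the inclusion $\qv_i \in \V(\s{i})$, I would compare \eqref{eq:VertexTriple} with the description $\V(\s{i}) = \{\,\um\, \s{i}^- - \up\, \s{i}^+ \mid \um,\up > 0\,\}$ from \eqref{eq:TranslationalSemigroup}. Each $\qv_i$ in \eqref{eq:VertexTriple} is written as a signed combination of the two null vectors spanning the stem-plane $\s{i}^\perp$, namely the two null vectors $\vn, \iot{0}\vn, \iot{2}\vn$ that represent the ideal endpoints of the geodesic $\partial\H{i}$. The ``consistently oriented'' choice of $(\s{0}, \s{1}, \s{2})$ made at the start of \S\ref{sec:DefCIT} determines which of these null rays is positively parallel to $\s{i}^-$ and which to $-\s{i}^+$; the signs in \eqref{eq:VertexTriple} are precisely arranged so that $\qv_i = \um_i\, \s{i}^- - \up_i\, \s{i}^+$, matching \eqref{eq:CITcondition}. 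Under the positivity hypothesis $\up_i, \um_i > 0$, this places $\qv_i$ inside the open cone $\V(\s{i})$, as required.

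The main obstacle is the orientation bookkeeping in the second step: one must check that in each $\qv_i$, the null vector carrying the coefficient $-\up_i$ really does correspond to (a positive multiple of) $\s{i}^+$, while the vector carrying the coefficient $\um_i$ corresponds to $\s{i}^-$, rather than the opposite convention. This is a routine but delicate consistency check, traceable back to the fixed-point cycle $([\vn], \iot{2}[\vn], \iot{1}\iot{2}[\vn])$ introduced in \S\ref{sec:FundQuad} and the cross-product definitions of $\s{0}, \s{1}, \s{2}$. Alternatively, one could bypass this bookkeeping by invoking \disjointnesscriterion of \cite{burelle2012crooked} directly in the pairwise-asymptotic case (the sides of $\Delta$ share ideal endpoints in pairs), but the semigroup-inclusion argument above is the more economical route once the orientations are fixed.
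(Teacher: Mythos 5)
Your argument is correct, but it is packaged differently from the paper's. The paper's entire proof is a one-line appeal to the pairwise disjointness criterion of \cite{burelle2012crooked,MR2653729,drumm1999geometry}, applied directly to the three translated halfspaces $\HH(\vs_i,p_i)$. You instead reduce to the minimal configuration: the open halfspaces $\HH(\s{i},O)$ of $\Tt(\Delta,O)$ are mutually disjoint (as asserted in \S\ref{sec:MinimalNondegeneracy}), and each $\HH_i=\HH(\s{i},p_i)$ nests inside $\HH(\s{i},O)$ because $\qv_i=p_i-O$ lies in the translational semigroup $\V(\s{i})$ of \eqref{eq:TranslationalSemigroup}. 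That monotonicity-plus-base-case route is the same mechanism the paper uses one subsection earlier to show that $\Tt\big(\Delta,(p_0,p_1,p_2)\big)$ is nondegenerate, so your proof has the virtue of making Lemma~\ref{lem:disjointhalfs} an immediate corollary of material already on the page rather than a fresh citation; the paper's version buys brevity at the cost of sending the reader back to the external criterion. One remark: the ``orientation bookkeeping'' you flag as the main obstacle is in fact already done for you, since the paper lists the quadrants $\V(\s{1})=\R^+(-\iot{0}\vn)+\R^+(\iot{2}\vn)$, etc., explicitly in the same null vectors used in \eqref{eq:VertexTriple}, and the signs match term by term; no appeal to the cross-product definitions or the fixed-point cycle is needed. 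The only residual delicacy is the open-versus-closed convention: the $\HH(\s{i},O)$ with common vertex are disjoint only as \emph{open} halfspaces (their closures share hinges), so to conclude disjointness of closed $\HH_i$ you need the strict form of the nesting, namely that $\qv_i$ in the \emph{open} cone $\V(\s{i})$ pushes the closure of $\HH_i$ into the interior of $\HH(\s{i},O)$ --- which is exactly what the hypothesis $\up_i,\um_i>0$ provides.
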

\begin{proof}
Apply the disjointness criterion~\cite{
burelle2012crooked,MR2653729,drumm1999geometry}
\end{proof}
\noindent
The {\em crooked ideal triangle\/} $\tideal$ is by definition the
intersection of the complements, or equivalently,
\[
\tideal := \E \setminus \big( \iint(\HH_0) \cup \iint(\HH_1) \cup \iint(\HH_2) \big). \]

\subsubsection{Calculating Margulis invariants}

\newcommand{\alphaABC}{\bmatrix \alpha(A) \\ \alpha(B) \\ \alpha(C) \endbmatrix}
\newcommand{\ABCmatrix}{\bmatrix 0 & \Ao & -\Ao \\-\Bo & 0 & \Bo \\\Co & - \Co & 0  \endbmatrix }
\newcommand{\qVector}{\bmatrix \qv_1\\\qv_2\\\qv_0\endbmatrix}
\newcommand{\rsMatrix}{\bmatrix 0 & -\up_1 & \um_1  \\ \um_2 & 0 & -\up_2 \\ -\up_0 & \um_0 & 0 \endbmatrix}
\newcommand{\nVector}{\bmatrix \vn \\ \iot{0}\vn  \\ \iot{2}\vn \endbmatrix}
\newcommand{\rsMatrixOne}{\bmatrix 0 & -\up_1 & \um_1  \\ 0 & 0 & 0 \\ 0 & 0 & 0 \endbmatrix}
\newcommand{\rsMatrixTwo}{\bmatrix 0 & 0 & 0 \\ \um_2 & 0 & -\up_2 \\ 0 & 0 & 0 \endbmatrix}
\newcommand{\rsMatrixThree}{\bmatrix 0 & 0 & 0 \\ 0 & 0 & 0 \\ -\up_0 & \um_0 & 0 \endbmatrix}

Now we compute an open cone of tame affine deformations in terms of the
respective $(\up,\um)$-coordinates of the respective vertices $p_1, p_2,p_0$:
\begin{prop}\label{prop:AlphaFromVertices}
Let $\up_i, \um_i\in \R$, for $i=1,2,0$, 
be the coefficients of a vertex triple
$(p_1,p_2,p_0)$ as in \eqref{eq:VertexTriple}.
Then 
\begin{equation}\label{eq:ThreeTerms}
\frac12 \alphaABC   = 
\MM_1 \bmatrix \up_1\\ \um_1 \endbmatrix + 
\MM_2 \bmatrix \up_2 \\ \um_2 \endbmatrix + 
\MM_0 \bmatrix \up_0 \\ \um_0 \endbmatrix 
\end{equation}
where
\begin{align}
\MM_1  &:=  
\bmatrix 0 \\ \Bo \\ -\Co \endbmatrix \cdot 
\bmatrix \iot{0}\vn & -\iot{2}\vn \endbmatrix, 
\notag \\
\MM_2  & := 
\bmatrix -\Ao \\ 0 \\ \Co \endbmatrix \cdot 
\bmatrix \iot{2}\vn & -\vn \endbmatrix, 
\notag \\
\MM_0  & := 
\bmatrix \Ao \\ -\Bo \\ 0\endbmatrix \cdot 
\bmatrix \vn & -\iot{0}\vn \endbmatrix.
\label{eq:MMatrices}
\end{align}
\end{prop}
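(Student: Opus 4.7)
The plan is to prove \eqref{eq:ThreeTerms} by direct substitution. Start from the three identities of Lemma~\ref{lem:alpha},
\[
\tfrac12\alpha\big(\rho(A)\big) = (\qv_2-\qv_0)\cdot\Ao,\qquad
\tfrac12\alpha\big(\rho(B)\big) = (\qv_0-\qv_1)\cdot\Bo,\qquad
\tfrac12\alpha\big(\rho(C)\big) = (\qv_1-\qv_2)\cdot\Co,
\]
and substitute in the explicit formulas from \eqref{eq:VertexTriple}. Each $\qv_i$ is a linear combination (with coefficients $-\up_i$ and $\um_i$) of exactly two of the three null vectors $\vn,\iot{0}\vn,\iot{2}\vn$, so expanding the dot products produces a linear function of the six scalars $\up_i,\um_i$ whose coefficients are drawn from the nine pairings of $\{\Ao,\Bo,\Co\}$ with $\{\vn,\iot{0}\vn,\iot{2}\vn\}$.

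The decomposition into three outer-product matrices is then forced by a simple incidence pattern. Each $\qv_i$ appears in exactly two of the three differences $\qv_2-\qv_0$, $\qv_0-\qv_1$, $\qv_1-\qv_2$, and with opposite signs; hence the pair $(\up_i,\um_i)$ contributes to exactly two of the three entries of the column $\bigl(\alpha(\rho(A)),\alpha(\rho(B)),\alpha(\rho(C))\bigr)^T$. The two Margulis directions indexing those two entries are precisely the nonzero components of the column factor of $\MM_i$ in \eqref{eq:MMatrices} (namely $(0,\Bo,-\Co)^T$, $(-\Ao,0,\Co)^T$, $(\Ao,-\Bo,0)^T$ for $i=1,2,0$ respectively), and the two null vectors supporting $\qv_i$ in \eqref{eq:VertexTriple} are precisely the entries of the row factor of $\MM_i$. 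Regrouping the expanded dot products by the index $i$ and writing each $i$-contribution as an outer product yields exactly \eqref{eq:ThreeTerms}.

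The only real task is bookkeeping: verifying that the signs in \eqref{eq:VertexTriple} align with those in the column factors of the $\MM_i$ and with the cyclic ordering $(1,2,0)$ implicit in Lemma~\ref{lem:alpha}. Since only six scalar terms are involved, once the sign table is written out the identification with $\MM_0,\MM_1,\MM_2$ is immediate. No geometric input beyond Lemma~\ref{lem:alpha} and the definition of the vertex triple is required; in particular, no disjointness or positivity of the coefficients $\up_i,\um_i$ enters the proof, and so the identity \eqref{eq:ThreeTerms} holds for every affine deformation presented by a vertex triple of the form \eqref{eq:VertexTriple}, not only for those yielding nondegenerate crooked ideal triangles.
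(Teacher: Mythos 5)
Your proposal is correct and is essentially the paper's own argument: the paper likewise just substitutes \eqref{eq:VertexTriple} into Lemma~\ref{lem:alpha} (written as the matrix identity \eqref{eq:AlphaVector}) and regroups by vertex index, splitting the coefficient matrix into three one-row pieces that become the outer products $\MM_1,\MM_2,\MM_0$. Your closing remark that no positivity of the $\up_i,\um_i$ is needed is also consistent with the paper, which states the proposition for arbitrary real coefficients.
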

\begin{proof}
Restate  Lemma~\ref{lem:alpha} as:
\[ \alphaABC= 2 \ABCmatrix\cdot \qVector\]
and  \eqref{eq:VertexTriple} as:
\[\qVector = \rsMatrix  \nVector\]
whence 
\begin{equation} \label{eq:AlphaVector}
\alphaABC = 2 \ABCmatrix\cdot \rsMatrix \nVector
\end{equation}
Decompose the last matrix as the sum of three matrices:
\[ 
\rsMatrixOne + \rsMatrixTwo + \rsMatrixThree. \]
For example:
\[ \ABCmatrix\cdot \rsMatrixOne \nVector = \MM_1 \bmatrix \up_1 \\ \um_1 \endbmatrix \]
The other two summands simplify similarly. 
Invoking the definitions \eqref{eq:MMatrices}, apply 
\eqref{eq:AlphaVector} to 
conclude the proof.
\end{proof}
\begin{prop}\label{prop:RankOne}
Each $3\times 2$-matrix $\MM_1,  \MM_2, \MM_0$ has rank one.
\end{prop}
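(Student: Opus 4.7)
The plan is to exhibit each of $\MM_1,\MM_2,\MM_0$ as a rank-one outer product of a column in $\R^3$ and a row in $\R^2$, by combining an orthogonality between the neutral eigenvectors $\Ao,\Bo,\Co$ and the distinguished fixed points $\t{0},\t{1},\t{2}$ with the observation that each $\t{j}$ lies on a specific side of $\Delta$.

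I first record the orthogonality relations. Each involution $\iot{i}$ appears in the Coxeter-factorization of exactly two of the transvections $\rho_0(A)=\iot{2}\iot{0}$, $\rho_0(B)=\iot{0}\iot{1}$, $\rho_0(C)=\iot{1}\iot{2}$, and conjugates those two transvections to their inverses. Hence $\iot{i}$ preserves both corresponding axes but reverses orientation on each, so its unique fixed point $[\t{i}]\in\Ht$ lies on both axes. Since the neutral eigenvector $X^0$ of a hyperbolic $X\in\SOoto$ is Lorentz-orthogonal to the axis plane of $X$ (a standard fact about eigenspace decompositions of Lorentzian isometries), this translates into
\[ \Ao\cdot\t{0}=\Ao\cdot\t{2}=0,\quad \Bo\cdot\t{0}=\Bo\cdot\t{1}=0,\quad \Co\cdot\t{1}=\Co\cdot\t{2}=0. \]

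Next, for each side of $\Delta$ the chosen point lies in the plane spanned by the two null endpoint vectors of that side. For $\MM_1$, side $1$ has endpoints $[\iot{0}\vn]$ and $[\iot{2}\vn]$, so
\[ \t{1}\,=\,a\,\iot{0}\vn\,+\,b\,\iot{2}\vn \]
with $a,b\ne 0$ since $[\t{1}]$ is interior to that side. Substituting into $\Bo\cdot\t{1}=\Co\cdot\t{1}=0$ yields
\[ a(\Bo\cdot\iot{0}\vn)+b(\Bo\cdot\iot{2}\vn)\,=\,0,\qquad a(\Co\cdot\iot{0}\vn)+b(\Co\cdot\iot{2}\vn)\,=\,0, \]
which forces the two nontrivial rows $(\Bo\cdot\iot{0}\vn,-\Bo\cdot\iot{2}\vn)$ and $(-\Co\cdot\iot{0}\vn,\Co\cdot\iot{2}\vn)$ of $\MM_1$ each to be scalar multiples of $(b,a)\in\R^2$. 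Combined with the vanishing first row (from the entry $0$ in the column), $\MM_1$ factors as a column in $\R^3$ times the row $(b,a)$, proving $\rank\MM_1\le 1$; the rank equals $1$ in the nondegenerate setup since $\Bo$ and $\Co$ cannot both be Lorentz-orthogonal to all of $\spn\{\iot{0}\vn,\iot{2}\vn\}$.

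The cases of $\MM_2$ and $\MM_0$ follow by the same template after cyclic relabeling: for $\MM_2$, expand $\t{2}\in\spn\{\iot{2}\vn,\vn\}$ and invoke $\Ao\cdot\t{2}=\Co\cdot\t{2}=0$; for $\MM_0$, expand $\t{0}\in\spn\{\vn,\iot{0}\vn\}$ and invoke $\Ao\cdot\t{0}=\Bo\cdot\t{0}=0$. The step requiring the most care is the initial orthogonality of $X^0$ against its axis plane and the identification of $[\t{j}]$ as lying on both relevant axes; once those are granted the remainder is routine linear algebra.
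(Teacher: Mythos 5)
Your proof is correct and rests on the same geometric fact as the paper's: the point $\t{1}$ lies simultaneously in $\Bo^\perp\cap\Co^\perp$ (being on both axes) and in the stem-plane $\spn\{\iot{0}\vn,\iot{2}\vn\}$. The paper packages this as the vanishing of the $2\times 2$ minor via the cross-product identity \eqref{eq:CrossProductIdentity} (orthogonality of $\Bo\times\Co$ and $\iot{0}\vn\times\iot{2}\vn$), whereas you exhibit the explicit rank-one factorization directly; the two arguments are equivalent.
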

\begin{proof}
We prove that 
\[ \MM_1 =
\bmatrix 0 & 0 \\ \Bo\cdot\iot{0}\vn & -\Bo\cdot\iot{2}\vn \\ 
-\Co\cdot\iot{0}\vn & \Co\cdot\iot{2}\vn \endbmatrix \] 
has rank one; $\MM_2$ and $\MM_0$ are handled similarly.
Since $\iot{0}\vn$ is not incident to $\Axis(B)$,
the inner product 
$\Bo\cdot\iot{0}\vn \neq 0$. Hence $\MM_1$ is nonzero, and 
its rank is positive. Thus it suffices to show that 
the $2\times 2$ submatrix
\[ \bmatrix  \Bo\cdot\iot{0}\vn & -\Bo\cdot\iot{2}\vn \\ 
-\Co\cdot\iot{0}\vn & \Co\cdot\iot{2}\vn \endbmatrix \] 
has determinant zero.
By \eqref{eq:CrossProductIdentity},
this determinant equals 
\begin{align*}
\big( \Bo\cdot\iot{0}\vn\big) &\big(\Co\cdot\iot{2}\vn\big) -
\big( \Bo\cdot\iot{2}\vn\big) \big(\Co\cdot\iot{0}\vn\big) \\ &=
(\Bo \times \Co)\cdot  ( \iot{0}\vn\times \iot{2}\vn) \end{align*}
which vanishes when the Lorentzian cross-product
$\Bo \times \Co$ is orthogonal to $\iot{0}\vn\times \iot{2}\vn$.
Now $\Bo \times \Co$ is a multiple of the timelike vector 
$\vt_1$ which represents $\Axis(B)\ \cap\ \Axis(C)$. 
The Lorentzian cross-product
$\iot{0}\vn\times\iot{2}\vn$ 
is a spacelike vector representing the geodesic having
endpoints $\iot{0}\vn$ and $\iot{2}\vn$. 
Since  this geodesic 
contains $\vt_1$, 
the vectors $\Bo \times \Co$ and $\iot{0}\vn\times\iot{2}\vn$ 
are orthogonal, and 
\[\big(\Bo \times \Co\big) \cdot \big(\iot{0}\vn\times \iot{2}\vn\big)=\ 0.\]
Thus the $2\times 2$ submatrix of $\MM_1$ is singular,
and the rank of $\MM_1$ equals $1$, as desired.\end{proof}

\subsection{Tiles and corners}
The vector space $\Ho$ parametrizes equivalence classes of affine deformations of 
$\Gamma_0$. 
As in \S\ref{sec:AlphaCoordinates}, this vector space has dimension $3$, and its sphere of directions
$\Sphere\Ho$ has dimension $2$.
Because of scale-invariance, it suffices to consider this
$2$-dimensional parameter space, and we regard the corresponding deformation spaces as subsets of
$\Sphere\Ho$. In particular we view the tiles 
(which were introduced  as $3$-dimensional simplicial ones) 
as triangular regions in $\Sphere\Ho$.
We view their edges 
(introduced as $2$-dimensional quadrants)
as line segments in $\Sphere\Ho$.

The superbasis defined by $(A,B,C)$ explicitly determines the isomorphism: 
\begin{align*}
\ho&\xrightarrow{\cong}\R^3 \\
[u] &\longmapsto \bmatrix \alpha(A)\\ \alpha(B) \\ \alpha(C)\endbmatrix.
\end{align*} 
The elliptic involution $\e_{A,B}$  inverts each 
basic element $A,B$.
Moreover $\e_{A,B}$ defines the Coxeter extension $\Gamma_0^\e$ of $\Gamma_0$.
The group $\Gamma_0^\e$  
has an ideal triangle $\Delta\subset \Ht$ 
as a fundamental domain. 
We study affine deformations of $\Gamma_0^\e$ which have  crooked ideal triangles
linearizing to $\Delta$ as fundamental domains.
By \S\ref{sec:IdealTrianglesCoxeterGroups},
such crooked ideal triangles are parametrized by a subset of 
the vector space 
\[ 
\s{0}^\perp\oplus\s{1}^\perp\oplus\s{2}^\perp\ \subset\ \V^3.
\]
Proposition~\ref{prop:AlphaFromVertices} describes the space of affine deformations
arising from nondegenerate crooked ideal triangles by the open $6$-dimensional orthant
\[
\V(\s{0}) \oplus \V(\s{1}) \oplus \V(\s{2}) \subset 
\s{0}^\perp\oplus\s{1}^\perp\oplus\s{2}^\perp. \]
\noindent
As above, affine deformations of the index two subgroup 
$\Gamma_0\subset\Gamma_0^\e$ are parametrized by the vector space
$\Ho\cong\R^3$.
Proposition~\ref{prop:RankOne} implies that the
image in $\Ho$ of each quadrant $\V(\s{i})$ is the ray  
\[  
\Corner_i :=  \MM_i\big(\V(\s{i})\big) \subset \ho \cong \R^3. 
\]
Proposition~\ref{prop:AlphaFromVertices} implies that the convex hull 
\[\mathfrak{T} := \Corner_0 + \Corner_1 + \Corner_2\]
parametrizes the affine deformations arising from a nondegenerate crooked
ideal triangle as in \S\ref{sec:IdealTrianglesCoxeterGroups}.

These affine deformations are all {\em proper.\/} 
The affine deformations of $\Gamma_0^\e$ admit a nondegenerate
crooked ideal triangle  $\Tt$ as a fundamental domain.
The affine deformations of $\Gamma_0$ admit a fundamental domain
$\Tt \cup \tiota_0(\Tt)$, and are therefore proper.

This deformation space  $\mathfrak{T} \subset \Ho$
is the  \emph{tile} referred to in Section~\ref{subsec:Super},
and parametrizes a set of {\em proper\/} affine deformations.
The tile is defined by its three  \emph{corners} $\CC_0$. $\CC_1$, and $\CC_2$. 
and the boundary is composed of its three  {\em edges:\/}
\begin{align*}
\Edge_0 &:= \CC_1 + \CC_2 \\
\Edge_1 &:= \CC_2 + \CC_0 \\
\Edge_2 &:= \CC_0 + \CC_1 \\
\end{align*}
where each pair of edges meet at a corner of the tile.

Affine deformations corresponding to corners are {\em never\/} proper
since their Margulis invariants vanish:
\begin{align*}
\alpha(A) = 0 &\text{~for affine deformations in~} \CC_1; \\
\alpha(B) = 0 &\text{~for affine deformations in~} \CC_2;\\
\alpha(C) = 0 &\text{~for affine deformations in~} \CC_0.
\end{align*}
However the edges parametrize {\em proper\/} affine deformations,
which admit crooked fundamental domains which are {\em crooked ideal quadrilaterals,\/}
unions of two {\em degenerate\/} crooked ideal triangles along a common face.

\subsection{Living on the edge}
The crooked fundamental domains for the Coxeter extension constructed from the ideal triangle $\ideal$ degenerate for parameter values on the edges.
The corresponding affine deformations admit crooked fundamental domains 
consisting of two degenerate crooked ideal triangles. 
These crooked fundamental domains are modeled on an ideal {\em quadrilateral\/} 
$\QQ$ decomposed into two adjacent ideal triangles in $\Ht$.

The construction of the tiles arose from ideal triangular domains of the Coxeter group. 
To show that the interior of the edges also correspond to proper deformations, 
we need crooked fundamental domains modeled on quadrilaterals. 

\subsubsection{Crooked ideal quadrilaterals}

The crooked ideal triangle $\Tt$ is bounded by three crooked planes
$\CP_i$ where $\HH_i := \HH(\vs_i,p_i)$, for $i=0,1,2$. 
Its reflected image $\tiota_0(\Tt)$
shares the face  $\CP_0$ with $\Tt$.
Its other two faces are
$\tiota_0(\CP_1)$ and $\tiota_0(\CP_2)$.
When the affine deformation approaches an edge,
the crooked ideal triangle $\Tt$ becomes degenerate, and the face
$\CP_0$ meets the other two faces $\CP_1,\CP_2$ of $\Tt$.

However, the face $\CP_0$ shields the reflected faces
$\tiota_0(\CP_1), \tiota_0(\CP_2)$ of $\tiota_0(\Tt)$
from the faces  $\CP_1, \CP_2$ of $\Tt$.
Since its faces are not disjoint, the degenerate crooked ideal triangle $\Tt$ is not a 
fundamental domain for $\Gamma^\e$.
However, the union $\Tt\cup\tiota_0(\Tt)$ is a crooked fundamental domain for
$\Gamma$, modeled on the ideal quadrilateral \[\iq = \Delta \cup \iota_0(\Delta). \] 

This crooked fundamental domain is a {\em crooked ideal quadrilateral,\/}
having faces
\[
\CP_1, \CP_2, \tiota_0(\CP_1), \tiota_0(\CP_2).\]

\subsubsection{Degnerating pairs of crooked ideal triangles}

Here is an explicit calculation, in coordinates, for the edge $\Edge_0$.
In that case,  $\up_0 = \um_0 = 0$, 
that is, the vertex of $\CP_0$ is the origin $O$.
Then \eqref{eq:VertexTriple} becomes:

\begin{alignat*}{2}
\qv_1 &:= \up_1 \big(\iot{0}\vn\big)   &&- \um_1\big( \iot{2}\vn\big) \notag\\
\qv_2 &:= \up_2 \big(\iot{2}\vn\big) && - \um_2 \big(\vn\big) 
\end{alignat*}
and the two hinges of $\CP_0$ are the photons
$\R\big(\iot{0}\vn\big)$ and $\R\big(\vn\big)$. 
While $\CP_2$ and $\CP_1$ remain disjoint, 
the stem of  $\CP_2$ meets the hinge $\R\big(\vn\big)$ of $\CP_0$ 
in the  past-pointing ray
\[ (-\um_2 - \Rplus) \vn \]
and the stem of  $\CP_1$ meets the hinge $\R \big(\iot{0}\vn\big)$ of $\CP_0$ 
in the future-pointing ray
\[ (\up_1 + \Rplus) \iot{0}\vn. \]
Similarly, the images of $\CP_1$ and $\CP_2$ under 
$\tiota_0$  are disjoint from each other, 
and disjoint from $\CP_1$ and $\CP_2$ respectively. 
Namely, 
\begin{alignat*}{2}
\iot{0}\qv_1 &:= \up_1 \big(\vn\big)   &&- \um_1\big(\iot{0} \iot{2}\vn\big) \notag\\
\iot{0}\qv_2 &:= \up_2 \big(\iot{0}\iot{2}\vn\big) && - \um_2 \big(\iot{0}\vn\big) 
\end{alignat*}
and  the stem of  $\iot{0}\CP_2$ meets the hinge $\R\big(\vn\big)$ of $\CP_0$ 
in the  future-pointing ray
\[ (\up_1 + \Rplus) \vn \]
and the stem of  $\CP_1$ meets the hinge $\R \big(\iot{0}\vn\big)$ of $\CP_0$ 
in the past-pointing ray
\[ (-\um_2 - \Rplus) \iot{0}\vn. \]
This produces the desired crooked fundamental ideal quadrilateral.


This procedure yields crooked fundamental ideal quadrilaterals for deformations 
parametrized by points on the edge $\Edge_0$. 
Deformations parametrized by points on $\Edge_1, \Edge_2$ are completely analogous.

\subsubsection{Disjointness of the tiles}
\begin{lemma}\label{lem:DisjointTiles}
Let $\bb,\bb'\in\BBB$ be neighboring superbases.
Then the corresponding tiles $\Tile_{\bb},
\Tile_{\bb'}$ are disjoint. 
\end{lemma}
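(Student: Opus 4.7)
Fix ordered representatives $\bb = (A,B,C)$ with $ABC = \Id$ and $\bb' = (A, B^{-1}, BA^{-1})$ of the neighboring superbases (using the explicit description in \S\ref{subsec:Super}); as unordered triples of curve classes they agree on $\{A, B\}$ and differ in the third element. By Proposition~\ref{prop:AlphaFromVertices} and Proposition~\ref{prop:RankOne}, each tile is a convex cone in $\Ho \cong \R^3$, generated by three rays $\CC_0, \CC_1, \CC_2$ (respectively $\CC_0', \CC_1', \CC_2'$) characterized by the vanishing of the Margulis invariant of one element of the superbasis. The strategy is to identify the shared boundary edge of the two tiles and to produce a linear functional on $\Ho$ that separates their interiors.

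\textbf{Step 1:\ the common edge.} One first identifies the edge $\Edge_0 = \CC_1 + \CC_2$ of $\Tile_\bb$ with the edge $\CC_1' + \CC_2'$ of $\Tile_{\bb'}$ as subsets of $\Ho$. Geometrically this encodes the degeneration from \S\ref{sec:FlippingIdealTriangulation}: on either side of the flip one obtains the same crooked ideal quadrilateral $\QQ$, and the corresponding affine deformations are precisely those for which the $0$-th crooked face has vertex at the origin, i.e.\ $\up_0 = \um_0 = 0$ in both parametrizations. The equality of the two edges in $\Ho$ is a direct computation using \eqref{eq:MMatrices} combined with the effect of the flip on the Coxeter generators $\iot{0}, \iot{1}, \iot{2}$ as displayed in \S\ref{sec:FlippingIdealTriangulation}.

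\textbf{Step 2:\ separating functional.} Produce a linear functional $\lambda \colon \Ho \to \R$ which vanishes on the common edge and takes strictly opposite signs on the rays $\CC_0$ and $\CC_0'$. The natural candidate is the infinitesimal analog of the Fricke identity $\tr(AB) + \tr(AB^{-1}) = \tr(A)\tr(B)$, namely a linear relation on $\ho$ expressing $\alpha(C)$ and $\alpha(C') = \alpha(BA^{-1})$ in terms of the shared invariants $\alpha(A), \alpha(B)$ and an auxiliary quantity that vanishes on the common edge. On $\CC_0$ one has $\alpha(C) = 0$ while $\alpha(C') \ne 0$, and on $\CC_0'$ one has $\alpha(C') = 0$ while $\alpha(C) \ne 0$; since $\alpha(A), \alpha(B) > 0$ throughout both tiles, carefully checking signs shows that $\lambda$ takes strictly opposite signs on the interior rays $\CC_0$ and $\CC_0'$. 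Consequently the open tiles $\Tile_\bb \setminus \Edge_0$ and $\Tile_{\bb'} \setminus \Edge_0$ lie in opposite open half-spaces bounded by $\ker(\lambda)$, whence they are disjoint and the closed tiles meet exactly along $\Edge_0$.

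\textbf{Main obstacle.} The delicate point is Step~2: establishing the precise infinitesimal Margulis-invariant identity relating $\alpha(AB)$, $\alpha(AB^{-1})$, $\alpha(A)$, $\alpha(B)$, and verifying the sign of $\alpha(C')$ on $\CC_0$ (respectively $\alpha(C)$ on $\CC_0'$). Both reduce to cocycle computations on $\ho$: the first by differentiating the classical Fricke identity along the affine deformation, and the second by applying Lemma~\ref{lem:alpha} to the Coxeter extension associated to $\bb'$ and tracking the effect of the flip on the neutral unit-spacelike vectors $\Ao, \Bo, \Co$ and the null vector $\vn$ fixing $\iot{0}\iot{1}\iot{2}$. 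Once the sign analysis is in place, the disjointness is a purely linear-algebraic conclusion from the existence of a separating functional.
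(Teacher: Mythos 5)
Your proposal is correct and follows essentially the same route as the paper: the separating functional you seek in Step~2 is exactly what the paper produces by differentiating the Fricke identity $\tr\big(\rho_0(C')\big)=\tr\big(\rho_0(A)\big)\tr\big(\rho_0(B)\big)-\tr\big(\rho_0(C)\big)$ using the fact that the derivative of the length function $\ell_X$ is the Margulis invariant $\alpha_X$, which yields $\alpha_{C'}=a\,\alpha_A+b\,\alpha_B-c\,\alpha_C$ with $a,b,c>0$ and hence the sign analysis you describe. The only difference is organizational (you isolate the common edge as a separate step), not mathematical.
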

\begin{proof}
We may assume $\bb$ and $\bb'$ correspond to basic triples
$(A,B,C)$ and $(B,A^{-1},C')$ respectively, where $C' := A B^{-1}$.
By applying a sign-change automorphism we may assume that
$\tr\big(\rho_0(A)\big), \tr\big(\rho_0(B)\big) > 0$, and by an elementary argument
(see \cite{MR2497777}), $\tr\big(\rho_0(C)\big) > 0$.  

Then 
\begin{align*}
\tr\rho_0(A) & = 2 \cosh(\ell_A/2) \\
\tr\rho_0(B) & = 2 \cosh(\ell_B/2) \\
\tr\rho_0(C) & = 2 \cosh(\ell_C/2) \\
\tr\rho_0(C') & = 2 \cosh(\ell_{C'}/2).
\end{align*}
Suppose given a deformation of $\rho_0$ whose derivative is a given cocycle which is the translational part of an proper affine deformation $\rho$. 
Then the derivative of the geodesic length function $\ell_A$ equals the Margulis invariant
$\alpha_A$ (as in Goldman-Margulis~\cite{MR1796129}. 
Differentiating the basic trace identity 
(as in Charette-Drumm-Goldman~\cite{charette2011finite})
\[
\tr\big(\rho_0(C')\big) = \tr\big(\rho_0(A)\big) \tr\big(\rho_0(B)\big) - \tr\big(\rho_0(C)\big)   \]
yields:
\begin{align*}
\sinh(\ell_{C'}/2) \alpha_{C'}  & = 
2\sinh(\ell_{A}/2) \cosh(\ell_{B}/2) \alpha_{A} \\  
& \quad+ 2\cosh(\ell_{A}/2) \sinh(\ell_{B}/2) \alpha_{B}  -
\sinh(\ell_{C}/2) \alpha_{C}.
\end{align*}
Thus $\alpha_C'$ is a linear combination 
$ a \alpha_A + b \alpha_B - c \alpha_C $ where $a,b,c > 0$.
This implies that the line defined by $\alpha_{C'} = 0$ intersects the 
closure of the triangular region defined by $\alpha_A, \alpha_B, \alpha_C \ge 0$
in the bounding lines $\alpha_A = 0, \alpha_B = 0$. 
\end{proof}
\begin{prop}\label{prop:DisjointTiles}
Let $\bb,\bb'\in\BBB$ be distinct superbases.
Then the corresponding tiles $\Tile_{\bb},\Tile_{\bb'}$ are disjoint. 
\end{prop}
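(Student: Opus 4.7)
The strategy is to extend Lemma~\ref{lem:DisjointTiles} from neighboring to arbitrary distinct superbases by induction on the combinatorial distance $n := d_{\T}(\bb, \bb')$ in the Farey tree $\T$ introduced in Section~\ref{sec:Farey}. The base case $n = 1$ is precisely Lemma~\ref{lem:DisjointTiles}.

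For $n \geq 2$, consider the unique geodesic path $\bb = \bb_0, \bb_1, \ldots, \bb_n = \bb'$ in $\T$. The proof of Lemma~\ref{lem:DisjointTiles} applied to the pair $(\bb_0, \bb_1)$ yields not only that $\Tile_{\bb_0}$ and $\Tile_{\bb_1}$ are disjoint, but that they lie in opposite open half-spaces $H_0, H_1$ of a projective line $L \subset \Sphere\Ho$: namely, $L$ is defined by $\alpha_Y = 0$, where $Y$ is the primitive distinguishing $\bb_1$ from $\bb_0$, and $\Tile_{\bb_0} \subset H_0$, $\Tile_{\bb_1} \subset H_1$. Hence to show $\Tile_{\bb_0} \cap \Tile_{\bb'} = \emptyset$, it suffices to show $\Tile_{\bb'} \subset H_1$.

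I plan to establish the stronger statement: \emph{for every superbasis $\bb^*$ lying on the $\bb_1$-side of the edge $(\bb_0, \bb_1)$ in $\T$, the tile $\Tile_{\bb^*}$ is contained in $H_1$.} This follows by a secondary induction on $d_{\T}(\bb^*, \bb_1)$. The base case $\bb^* = \bb_1$ is immediate. For the inductive step, let $\bb^{**}$ be the neighbor of $\bb^*$ one step closer to $\bb_1$, so $\Tile_{\bb^{**}} \subset H_1$ by the inner induction hypothesis. Applying Lemma~\ref{lem:DisjointTiles} to the pair $(\bb^{**}, \bb^*)$ produces a separating projective line $L^*$ between these two tiles. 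Provided that $L^* \subset \overline{H_1}$, the tile $\Tile_{\bb^*}$ must also lie in $H_1$: it lies on the opposite side of $L^*$ from $\Tile_{\bb^{**}}$, but cannot cross $L$ into $H_0$ without forcing $L^*$ to enter $H_0$.

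The principal obstacle is verifying this geometric non-crossing property: each separating line $L^*$ arising from an edge of $\T$ on the $\bb_1$-side of $(\bb_0, \bb_1)$ stays within $\overline{H_1}$. I anticipate establishing this by iterating the explicit trace-identity calculation from the proof of Lemma~\ref{lem:DisjointTiles}: at each flip along the tree walk, the new Margulis invariant $\alpha_{Y^*}$ is expressed as a linear combination of the invariants of the previous superbasis with two positive and one negative coefficient, and composing these recurrences shows that the successive vanishing loci $\{\alpha_{Y^*} = 0\}$ remain confined to the $H_1$-side of $L$. A conceptually cleaner alternative would be to observe that the tiles $\{\Tile_{\bb}\}_{\bb \in \BBB}$ assemble into a tessellation of a region of $\Sphere\Ho$ whose dual $1$-complex is the tree $\T$, and that any simply connected tessellation with tree dual automatically has pairwise interior-disjoint cells, bypassing the explicit computation of separating lines altogether.
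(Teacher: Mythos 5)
Your proposal is correct and follows essentially the same route as the paper: both argue by induction on the distance in the superbasis tree $\T$, take Lemma~\ref{lem:DisjointTiles} as the base case, and reduce the inductive step to the claim that the line spanned by the edge between consecutive tiles separates the two endpoint tiles. Your nested induction and the iterated trace-identity/positivity argument simply make explicit the one-line assertion the paper leaves unproved (``the edge separating $\Tile(\bb_i)$ and $\Tile(\bb_{i+1})$ extends to a line which separates $\Tile_{\bb},\Tile_{\bb'}$''), so if anything your write-up is the more careful version of the same proof.
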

\begin{proof}
Let $d$ be the natural metric on the tree $\T$ whose set of vertices equals $\BBB$.
We prove Proposition~\ref{prop:DisjointTiles} by induction on $d = d(\bb,\bb')$. 

The case $d=1$ corresponds to neighboring superbases,
which is Lemma~\ref{lem:DisjointTiles} by induction on $d$.

Now suppose $d>1$. Let
\[ \bb = \bb_1, \bb_2, \dots, \bb_d = \bb' \]
be the geodesic in $\T$ joining $\bb, \bb'$.
Inductively assume that $\bb_i\cap \bb_j = \emptyset$ if $\{i,j\}\neq\{1,d\}$.
The tiles are triangular subregions of the triangular region defined by
$\alpha_A, \alpha_B, \alpha_C \ge 0$.
Furthermore for any $i<d$, the edge separating $\Tile(\bb_i)$ and $\Tile(\bb_{i+1})$
extends to a line which separates $\Tile_{\bb},\Tile_{\bb'}$. 
Thus $\Tile_{\bb}\cap\Tile_{\bb'} = \emptyset$ as claimed. 
\end{proof}

%

\section{Proper affine deformations}
So far we have described two types of proper affine deformations
with crooked ideal polyhedra as fundamental domains.
The {\em tiles\/} correspond to 
proper affine deformations of Coxeter groups 
having a crooked ideal triangle as fundamental domain.
The {\em edges\/} correspond to affine structures having a crooked ideal
quadrilateral built out of two adjacent crooked ideal triangles 
(which are degenerate). 
Say that an affine deformation is {\em geometrically tame\/}
if is of one of these two types.
In this section we use the picture we have developed
to show that every {\em proper affine deformation\/}
is geometrically tame in this sense.

\subsection{The combinatorial structure}
 {\em Superbases}  were introduced in \S\ref{subsec:Super} as unordered triples of closed 
 curves whose pairwise intersection numbers are all $1$. 
 Given a superbasis $\bb = (X,Y,Z)$, 
 choose any ordering and find a corresponding  ordered set of isometries of the hyperbolic plane 
 $(A,B,C)$ such that $ABC=1$. 

Furthermore, there is extension of the group $\Gamma_0= \langle A, B, C \rangle$ generated
by three  involutions $(\iot{0}, \iot{1}, \iot{2} )$ such that   
\[ A = \iot{2}\iot{0},\  B=\iot{0}\iot{1},\  C= \iot{1}\iot{2}. \] 

A proper representation $\rho$ of $\Gamma_0$ extends to a representation of $\Gamma$ generated by the  affine involutions  $(\tiota{_0}, \tiota{_1}, \tiota{_2})$ 
defined by \eqref{eq:AffCox}. 
From the set of affine involutions, a tile was in $ \Sphere\Ho $ was constructed.
Thus, the superbasis $\bb\in\BBB$ corresponds to a tile in the  deformation space
\[\Tile(\bb)\subset\Sphere\Ho \]
where $\Sphere\Ho$ is the sphere of directions as defined in \S\ref{sec:scalings}.
Adjacent tiles meet along a common edge, 
and correspond to neighboring superbases. 
Recall that two superbases are adjacent if they contain a common
basis, and that an unoriented basis extends to  exactly  two superbases. 
Thus, every edge bounds exactly two tiles.
The graph whose $0$-simplices correspond to superbases and whose
$1$-simplices correspond to unoriented bases is the {\em trivalent tree\/}
dual to the curve complex of $\surf$. 
This is the {\em pants complex\/} of $\surf$.



\subsection{Geometric tameness}
Let $\Proper\subset\Sphere\Ho$ denote the space of proper affine deformations of a one-holed torus.Then 
\[
\Tame := \bigcup_{\bb\in\BBB} \overline{\Tile(\bb)}
\] 
parametrizes the set of geometrically tame affine deformations.
Since geometrically tame affine deformations admit crooked fundamental domains, they are proper affine deformations. 
\begin{thm}
 $\Tame = \Proper$.
\end{thm}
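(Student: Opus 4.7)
The plan is to establish the only nontrivial inclusion $\Proper\subseteq\Tame$; the reverse inclusion $\Tame\subseteq\Proper$ is the content of the discussion preceding the statement, namely that the crooked ideal triangles of \S\ref{sec:CrookedIdealTriangles} and the crooked ideal quadrilaterals of \S\ref{sec:IdealTrianglesCoxeterGroups} furnish Drumm-type fundamental polyhedra, hence proper actions.

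First I would fix $[u]\in\Proper\subset\Sphere\Ho$. By the Margulis Opposite Sign Lemma (Lemma~\ref{lem:OppositeSign}), and because we have quotiented only by positive scalings (see \S\ref{sec:scalings}), the Margulis invariant $\alpha_{[u]}(g)$ is strictly positive for every primitive $g\in\Ft$; in particular, for every superbasis $\bb=(A,B,C)\in\BBB$, the triple $\bigl(\alpha(A),\alpha(B),\alpha(C)\bigr)$ lies in the open positive octant. The goal is then to exhibit a superbasis $\bb$ with $[u]\in\overline{\Tile(\bb)}$.

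The key idea is to walk through the trivalent tree $\T$ of superbases, which is the pants complex of $\surf$. Start with an arbitrary superbasis $\bb_0$. If $[u]\in\overline{\Tile(\bb_0)}$, stop. Otherwise $\Tile(\bb_0)$ is a triangular region in $\Sphere\Ho$ and $[u]$ lies strictly on one side of exactly one of its three bounding edges; crossing this edge determines a unique neighboring superbasis $\bb_1$. Iterating yields a geodesic path $\bb_0,\bb_1,\bb_2,\ldots$ in $\T$. The tile-disjointness Proposition~\ref{prop:DisjointTiles} ensures that the walk never revisits a tile and that each $\Tile(\bb_k)$ is separated from $[u]$ by the same line that separates $\Tile(\bb_k)$ from $\Tile(\bb_{k+1})$, so the walk is genuinely a geodesic ray in $\T$ in the direction of $[u]$.

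The hard part will be to show that this walk must terminate. If it did not, the sequence $\bb_n$ would converge, in the Farey boundary $\partial\T\cong\R\setminus\Q$, to an irrational lamination $\lambda$ on $\surf$. At each flip, the diagonal primitive $C_n$ of $\bb_n$ is replaced by $C_n'=A_nB_n^{-1}$, and differentiating the trace identity as in the proof of Proposition~\ref{prop:DisjointTiles} gives
\[
\sinh(\ell_{C'_n}/2)\,\alpha(C'_n)\;=\;a_n\,\alpha(A_n)+b_n\,\alpha(B_n)-c_n\,\alpha(C_n),
\]
with $a_n,b_n,c_n>0$ depending on the hyperbolic lengths $\ell_{A_n},\ell_{B_n},\ell_{C_n}$. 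Combined with the combinatorial rule determining which edge of $\Tile(\bb_n)$ is crossed at each step, this identity lets one track the evolution of $\bigl(\alpha(A_n),\alpha(B_n),\alpha(C_n)\bigr)$ along the walk. Since the lengths $\ell$ of the primitives appearing in $\bb_n$ grow exponentially in $n$ whereas the $\alpha$-values form a bounded sequence (being linear functionals of the fixed cocycle $u$ on a sequence of primitives converging projectively to $\lambda$), one extracts a subsequence of primitives $g_{n_k}$ with $\alpha(g_{n_k})/\ell(g_{n_k})\to 0$. Invoking the Goldman--Labourie--Margulis properness criterion~\cite{MR2600870}, which says that a proper deformation has uniformly positive normalized Margulis invariant over all geodesic currents, this is a contradiction. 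Hence the walk is finite, and its terminal superbasis $\bb_n$ satisfies $[u]\in\overline{\Tile(\bb_n)}\subset\Tame$, completing the proof.
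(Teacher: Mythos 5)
Your overall strategy is genuinely different from the paper's. The paper never ``walks toward'' a given deformation: it first proves that $\Tame$ is convex, by exhibiting it as an increasing union of convex polygons $\Tame_n$ (a tile, then a hexagon, then $3\cdot 2^n$-gons, each convex because every vertex angle is pinned between two lines $\alpha(X)=0$), and then rules out a point of $\Proper\setminus\Tame$ by a separating-line argument using the structure of $\partial\Tame$, whose segments have endpoints on lines $\alpha(X)=0$ and hence consist of non-proper directions. That argument is elementary and self-contained given Lemma~\ref{lem:DisjointTiles} and Proposition~\ref{prop:DisjointTiles}; it uses nothing beyond the Opposite Sign Lemma and the tile geometry. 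Your tree-walk is closer in spirit to the Danciger--Gu\'eritaud--Kassel point of view, and it would buy a more constructive statement (it names the tile containing $[u]$), but it imports the Goldman--Labourie--Margulis properness criterion as a black box --- a much heavier input than anything the paper uses, and one whose published form in \cite{MR2600870} requires the linear holonomy to be convex cocompact, whereas this paper explicitly allows the boundary holonomy $\rho_0(K)$ to be parabolic.

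Beyond that, the termination step has a genuine gap. The assertion that ``the $\alpha$-values form a bounded sequence, being linear functionals of the fixed cocycle $u$'' does not hold: $\alpha_{[u]}(g)$ is linear in $[u]$ for each fixed $g$, but as $g$ ranges over primitives whose word (and geodesic) lengths tend to infinity, $\alpha_{[u]}(g)$ typically grows like $\ell(g)$; boundedness of the values is exactly what you would need to prove, not a formal consequence of linearity. What is true is that the \emph{normalized} functionals $\alpha_{\cdot}(g_n)/\ell(g_n)$ converge to the diffused invariant of the limiting lamination $\lambda$, and the way to reach a contradiction is to use the geometry of the walk: $[u]$ lies beyond each crossed edge, whose endpoints sit on the lines $\alpha(X^n_a)=0$ and $\alpha(X^n_b)=0$ for the common basis $\{X^n_a,X^n_b\}$, and since both $X^n_a$ and $X^n_b$ converge projectively to $\lambda$, passing to the limit forces the normalized invariant of $\lambda$ to be $\le 0$, contradicting the strict positivity that \cite{MR2600870} guarantees for proper deformations. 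As written, your argument skips this and instead divides a ``bounded'' numerator by an exponentially growing denominator, which is not justified. There is also a small but real issue at the start: a point outside a triangle in $\Sphere\Ho$ need not lie beyond exactly one edge (it can lie in a corner region beyond two of the extended edge-lines), so the walk is not automatically well defined as a geodesic ray without an additional choice or argument. If you want to keep your approach, you must repair the normalized-limit step and address the parabolic case; otherwise the paper's convexity-plus-separation argument is the more economical route.
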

\begin{proof}
Since $\Tame\subset \Proper$,
it remains to show $\Proper \subset \Tame$.

We first show that  $\Tame$ 
is convex by induction, representing $\Tame$ as the increasing union
of a sequence of convex domains $\Tame_n$.  
Choose an initial superbasis $\bb_0$ and define
\[ \Tame_0 := \Tile(\bb_0) \]
which is a convex triangular region in $\ho$.

The three  superbases $\bb_1, \bb_2, \bb_3$ adjacent to $\bb_0$ 
determine triangular regions sharing edges with $\Tile(\bb_0)$. 
Define

\[ \Tame_1 :=  
\overline{\Tame_0} %
\cup \big( \Tile(\bb_1)\cup   \Tile(\bb_2) \cup  \Tile(\bb_3) \big).  
\]
This is a hexagon, and each side of its boundary $\partial\Tame_1$
lies inside the halfspaces bounded by $\alpha(X) \geq 0$ 
for some primitive element $X$ 
an element of the basis.
Since each angle of this $6$-gon is less than $\pi$, this $6$-gon is convex. 
Inductively define $\Tame_n$ as follows.
$\partial\Tame_n$ consists of $3\cdot 2^n$ edges. 
Each edge of $\partial\Tame_n$ is the edge of a triangle 
\[ \Tile(\bb_{\beta})\subset \Tame_n\] 
for some superbasis $\bb_{\beta}$. 
Constuct the polygon $\Tame_{n+1}$ by adding on the  triangle 
$\Tile(\bb_{\gamma})$ to each side $s \subset \partial\Tame_{n-1}$, 
where $\bb_{\gamma}$  is the superbasis neighboring
$\bb_{\gamma}$ such that 
\[ \Tile(\bb_{\gamma})\cup \Tile(\bb_{\beta}) = s. \] 
The two endpoints of $s$ lie on lines 
$\alpha(X_a) = 0$ and $\alpha(X_b) =0$ where 
\[ X_a,X_b = B_{\beta}\cap B_{\gamma}.\]
Thus all of the $\left( 3\cdot 2^{n+1}\right)$-gons  
$\Tame_{n+1}$ lie between the $\alpha(X)=0$ lines which touches its endpoints, 
so all of the angles of $\Tame_{n+1}$ must be less than $\pi$
 
The infinite union  $\Tame$ is also convex, 
since it is an increasing union of sharp convex domains.
To show this, for any $p_1 ,p_2\in U$ there must be some $\Tame_{n}$ 
containing these two points. 
Since each $\Tame_n$ is convex, their union $\Tame$ is convex.

Finally, every $p\in \Proper$ is also in $U$:
Assume otherwise, that is, that some $p\in \Proper \setminus \Tame$. 
However, because  $\Tame$ is convex, 
some line $L$ through $p$ does not intersect $\Tame$. 
Moreover, one of the halfplanes with boundary $L$ does not meet 
$\Tame$, but does meet $\partial\Proper$.
This subset of $\partial\Proper$ must contain
at least parts of two different segments of  $\partial\Tame$. 
Between any two segments of $\partial\Tame$ 
there must be an entire segment of $\partial\Tame$, 
so there must be an entire segment of $\Tame$ 
which lies in the complement of the closure of $\Tame$. 
This is a contradiction, as every segment of $\Tame$ contains some vertex of some triangle in $\Tame$.

\end{proof}

\begin{figure}
\centerline{\includegraphics
[scale=0.6]
{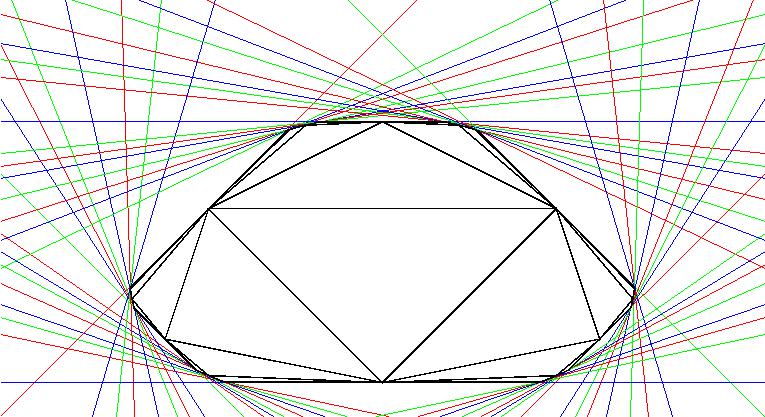}}
\caption{Tiling of the convex body of
proper affine deformations of a hyperbolic one-holed tori. 
The complete lines outside the convex body are the lines defined by the
covectors $\alpha(W)$, where $W$ ranges over nonseparating curves}
\end{figure}
%
%

 \bibliographystyle{amsplain}
 \bibliography{ranktwo.bbl}

\end{document}